\newcommand{\setwindow}[5]{
\def\xmin{#1}%
\def\ymin{#2}%
\def\xmax{#3}%
\def\ymax{#4}%
\pstFPsub\viewingwidth{#3}{#1}%
\pstFPdiv\result{\strip@pt#5}{\viewingwidth}%
\psset{unit=\result pt}}
\newtheorem{theorem}{Theorem}
\newtheorem{corollary}{Corollary}
\newtheorem{proposition}{Proposition}
\newtheorem{lemma}{Lemma}
\newtheorem{remark}{Remark}
\def\st{{\rm s.t.}}
\def\R{{\mathbb R}}
\def\Ri{{\mathcal R}}
\def\Sn{{\mathbb S}}
\def\cl{{\rm cl}}
\def\conv{{\rm conv}}
\def\ri{{\rm ri}}
\def\ie{{i.e.,} }
\def\G{{\mathcal G}}
\def\S{{\mathcal S}}
\def\H{{\mathcal H}}
\def\C{{\mathcal C}}
\def\I{{\mathcal I}}
\def\B{{\mathcal B}}
\def\I{{\mathcal I}}
\def\D{{\mathcal D}}
\def\U{{\mathcal U}}
\def\P{{\mathcal P}}
\def\Q{{\mathcal Q}}
\def\A{{\mathcal A}}
\def\F{{\mathcal F}}
\def\01{\ensuremath{0\mathord{-}1}}
\def\E{{\mathcal E}}
\def\Z{{\mathcal Z}}
\newcommand{\bi}{\begin{list}{$\bullet$}{\setlength{\parsep}{0pt}\setlength{\itemsep}{0pt}}}
\def\XXsum#1#2#3{{\setbox0=\hbox{$#1{#2#3}{\sum}$ }
\vcenter{\hbox{$#2#3$ }}\kern-.5\wd0}}
\newcounter{claim} 
\def\XXsum#1#2#3{{\setbox0=\hbox{$#1{#2#3}{\sum}$ }
\vcenter{\hbox{$#2#3$ }}\kern-.5\wd0}}
\title{Explicit convex hull description of bivariate quadratic sets with indicator variables
\thanks{Antonio De Rosa was supported in part by the NSF DMS Grant No.~1906451, the NSF DMS Grant No.~2112311, and the NSF DMS CAREER Award No.~2143124.}}
\author{Antonio De Rosa
\thanks{Department of Mathematics, University of Maryland, 4176 Campus Dr, College Park, MD 20742, USA.
             E-mail: {\tt  anderosa@umd.edu}.
             }
\and
Aida Khajavirad
\thanks{Department of Industrial and Systems Engineering, Lehigh University, Bethlehem, PA 18015, USA.
             E-mail: {\tt aida@lehigh.edu}.
             }
}
\begin{document}

\maketitle

\date{}
\begin{abstract}
We consider the nonconvex set
$\S_n = \{(x,X,z): X = x x^T, \; x (1-z) =0,\; x \geq 0,\; z \in \{0,1\}^n\}$,
which is closely related to the feasible region of several difficult nonconvex optimization problems such as the best subset selection and constrained portfolio optimization. Utilizing ideas from convex analysis and disjunctive programming,
we obtain an explicit description for the closure of the convex hull of $\S_2$ in the space of original variables.
In order to generate valid inequalities corresponding to supporting hyperplanes of the convex hull of $\S_2$,
we present a simple separation algorithm that can be incorporated in branch-and-cut based solvers to enhance the quality of existing relaxations.
\end{abstract}

{\bf Key words.} \emph{Quadratic optimization; Indicator variables; Convex hull; Disjunctive programming; Perspective function; Projection.}

\vspace{0.1in}

{\bf AMS subject classifications.} 90C11, 90C20, 90C25, 90C26.

\section{Introduction}
\label{sec:intro}

Consider a mixed-integer quadratically constrained optimization problem (MIQCP) with indicator variables of the form:
\begin{align}
\label{miqp}
\tag{QPI}
\min \quad & x^T Q_0 x + c_0^T x\\
\st  \quad & x_i (1-z_i) = 0, \quad \forall i \in \{1,\ldots,n\}\nonumber\\
& x^T Q_j x + c_j^T x \leq b_j, \quad \forall j \in \{1,\ldots,m\}\nonumber\\
& A x \leq \alpha, \; B z \leq \beta\nonumber\\
&x \in \R^n, \; z \in \{0,1\}^n,\nonumber
\end{align}
where $A, B, Q_j$, $j \in \{0,\ldots,m\}$, are matrices of appropriate dimensions and $Q_j$ is symmetric for all $j \in \{0,\ldots,m\}$.
The binary variables $z$ are often referred to as~\emph{indicator variables}.
Problem~\eqref{miqp} subsumes several classes of difficult nonconvex optimization problems such as best subset selection~\cite{bertsimas16},
constrained portfolio optimization~\cite{bien96}, quadratic facility location~\cite{gunlin10}, and optimal power flow with transmission switching~\cite{hijazi17,best20}, among others.
It is important to note that we are not assuming that any of the matrices $Q_j$
is positive semi-definite (PSD). In this paper, we are interested in the quality of convex relaxations for Problem~\eqref{miqp}.

To build convex relaxations for MIQCPs, it is common practice to first introduce auxiliary variables $X_{ij} := x_i x_j$ for all $i,j \in [n]:= \{1,\ldots,n\}$ to obtain a reformulation of Problem~\eqref{miqp} in a lifted space:
\begin{align}
\label{miqp2}
\tag{QPI$'$}
\min \quad & \langle Q_0,X \rangle + c_0^T x\\
\st  \quad  &  x_i (1-z_i) = 0, \quad \forall i \in [n] \nonumber\\
& X = xx^T\nonumber\\
& \langle Q_j,X \rangle + c_j^T x \leq b_j, \quad \forall j \in [m]\nonumber\\
& A x \leq \alpha, \; B z \leq \beta\nonumber\\
&x \in \R^n, \; z \in \{0,1\}^n, \; X \in \Sn^n_{+},\nonumber
\end{align}
where $\langle \cdot,\cdot \rangle$ denotes the standard matrix inner product, and $\Sn^n_{+}$ denotes the set of $n \times n$ PSD matrices.
Throughout this paper, we assume that $A x \leq \alpha$ implies $x \in \R_{+}^n$, where $\R^n_{+}$ denotes the set of nonnegative real $n$-vectors.
It then follows that to construct strong convex relaxations for Problem~\eqref{miqp2}, it is essential to effectively convexify the nonconvex set:
\begin{equation}\label{TheSet}
\S_n = \Big\{(x,X,z):\; X = x x^T, \; x (1-z) =0,\; x \in \R_{+}^n, \; z \in \{0,1\}^n\Big\}.
\end{equation}
Throughout this paper, given a set $\C$, we denote its convex hull by $\conv(\C)$, and the closure of its convex hull by $\overline{\conv}(\C)$.
The simplest case of $\S_n$, \ie the case with $n=1$, has been studied in~\cite{akturk09,gunlin10}; leveraging on disjunctive programming techniques~\cite{rf70,b85,cs99}, the authors obtain an explicit characterization of the closure of the convex hull of $\S_1$ in the space of the original variables:
\begin{proposition}[\cite{akturk09,gunlin10}]\label{n=1}
The closure of the convex hull of $\S_1$ is given by
\begin{equation}\label{n1c}
\overline{\conv}(\S_1) = \Big\{(x,X,z): \; X_{11} z_1 \geq x^2_1, \; x_1 \geq 0,\; X_{11} \geq 0, \; z_1 \in [0,1]\Big\}.
\end{equation}
\end{proposition}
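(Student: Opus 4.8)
The plan is to prove the two inclusions separately, exploiting the fact that the right-hand side of \eqref{n1c} is a closed convex set. First I would record the explicit form of $\S_1$: since $z_1 \in \{0,1\}$, the constraint $x_1(1-z_1)=0$ forces $x_1 = 0$ whenever $z_1 = 0$, so that $\S_1 = \{(0,0,0)\} \cup \Gamma$, where $\Gamma = \{(t,t^2,1): t \geq 0\}$ is the (nonconvex) arm of a parabola lifted to the hyperplane $z_1 = 1$. Denote by $R$ the set on the right-hand side of \eqref{n1c}. The key structural observation is that the constraint $X_{11} z_1 \geq x_1^2$ together with $X_{11} \geq 0$, $z_1 \geq 0$ describes a rotated second-order cone (equivalently $\|(2x_1, X_{11}-z_1)\| \leq X_{11}+z_1$), so $R$, being its intersection with the halfspaces $x_1 \geq 0$ and $z_1 \leq 1$, is closed and convex.

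For the easy inclusion $\overline{\conv}(\S_1) \subseteq R$, I would simply check that both pieces of $\S_1$ satisfy the defining inequalities of $R$: the point $(0,0,0)$ trivially, and each $(t,t^2,1)$ because $X_{11}z_1 = t^2 = x_1^2$ holds with equality. Since $R$ is closed and convex and contains $\S_1$, it contains $\overline{\conv}(\S_1)$.

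The substance is the reverse inclusion $R \subseteq \overline{\conv}(\S_1)$, and here I would use a perspective decomposition. As a sub-step I would verify that the closed convex hull of the parabola arm is its epigraph, $\overline{\conv}(\Gamma) = \{(u, U, 1): U \geq u^2,\ u \geq 0\}$: the inclusion ``$\subseteq$'' is immediate since the epigraph is closed, convex, and contains $\Gamma$, while for ``$\supseteq$'' any $(u,U,1)$ with $u>0$ is the convex combination $(1 - u^2/U)(0,0,1) + (u^2/U)(U/u, U^2/u^2, 1)$ of two points of $\Gamma$, and the remaining points $(0,U,1)$ with $U > 0$ are captured by a limit. Note $\overline{\conv}(\Gamma) \subseteq \overline{\conv}(\S_1)$. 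Now take any $(x_1, X_{11}, z_1) \in R$ with $z_1 \in (0,1]$. The constraint $X_{11} z_1 \geq x_1^2$ rewrites as $X_{11}/z_1 \geq (x_1/z_1)^2$, which is precisely the statement that the point $(x_1/z_1,\ X_{11}/z_1,\ 1)$ lies in $\overline{\conv}(\Gamma)$ --- this is the epigraph of the perspective of $t \mapsto t^2$. Consequently
\[
(x_1, X_{11}, z_1) = z_1 \cdot (x_1/z_1,\ X_{11}/z_1,\ 1) + (1-z_1)\cdot(0,0,0)
\]
exhibits the point as a convex combination of an element of $\overline{\conv}(\Gamma) \subseteq \overline{\conv}(\S_1)$ and the point $(0,0,0) \in \S_1$; since $\overline{\conv}(\S_1)$ is convex, the point belongs to it.

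It remains to treat the face $z_1 = 0$, which is exactly where the main obstacle lies: $\conv(\S_1)$ itself is \emph{not} closed, since its only point with $z_1 = 0$ is the origin, whereas $R$ contains the whole ray $\{(0, X_{11}, 0): X_{11} \geq 0\}$. For such a point, the inequality $X_{11} z_1 \geq x_1^2$ at $z_1 = 0$ forces $x_1 = 0$, and I would recover it as the limit of the points $(0, X_{11}, z_1) \in R$ with $z_1 \in (0,1]$, which belong to $\overline{\conv}(\S_1)$ by the previous paragraph; closedness of $\overline{\conv}(\S_1)$ then finishes the argument. The only delicate point throughout is this passage to the closure --- equivalently, the correct handling of the recession directions of the unbounded disjunct $z_1 = 1$ --- so I would state the limiting construction explicitly rather than relying on a generic disjunctive-programming formula.
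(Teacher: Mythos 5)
Your proof is correct. It is worth pointing out, though, that the paper contains no proof of this proposition at all: it is stated as a known result imported from~\cite{akturk09,gunlin10}, where (as the paper notes) it is obtained via disjunctive programming~\cite{rf70,b85,cs99} --- the same machinery the paper later deploys for $\S_2$ in Section~4. That route writes $\S_1$ as the union of the two disjuncts $z_1=0$ and $z_1=1$, convexifies each, and invokes the general closed-convex-hull formula for a union of convex sets, in which the perspective of $x_1 \mapsto x_1^2$ produces the constraint $X_{11} \geq x_1^2/z_1$. Your argument is an elementary, self-contained instantiation of exactly that scheme: you convexify the $z_1=1$ disjunct by hand (the epigraph computation for $\Gamma$), you combine it with the origin through the convex combination weighted by $z_1$ --- which is precisely the perspective mechanism, since $(x_1,X_{11},z_1) = z_1\,(x_1/z_1, X_{11}/z_1, 1) + (1-z_1)\,(0,0,0)$ --- and you then take the closure explicitly. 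The added value of your version is that it makes the delicate point visible: $\conv(\S_1)$ is \emph{not} closed (its $z_1=0$ face is just the origin), so the ray $\{(0,X_{11},0): X_{11}\geq 0\}$, which is a recession direction of the $z_1=1$ disjunct, must be reached by a limiting sequence, and you exhibit that sequence rather than burying it in the closure operation of a general disjunctive formula. What the general machinery buys in exchange is scalability: the same formula produces the extended formulation for $\S_2$ (and for unions of many convex pieces) without case-by-case convex-combination arguments, which is why the paper adopts it for its main theorem.
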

Using Proposition~\ref{n=1}, one obtains the following convex relaxation of Problem~\eqref{miqp2}:
\begin{align}
\label{perspR}
\tag{Persp}
\min \quad & \langle Q_0,X \rangle + c_0^T x\\
\st  \quad  & X \succeq xx^T\nonumber\\
& X_{ii} z_i \geq x^2_i, \quad \forall i \in [n]\nonumber\\
& \langle Q_j,X \rangle + c_j^T x \leq b_j, \quad \forall j \in [m]\nonumber\\
& A x \leq \alpha, \; B z \leq \beta\nonumber\\
&x \in \R^n, \; z \in [0,1]^n, \; X \in \Sn^n_{+}.\nonumber
\end{align}
Problem~\eqref{perspR} is often referred to as the~\emph{perspective relaxation}; this relaxation has been shown to be highly effective
in the special case where $m = 0$, and $Q_0$ is a PSD matrix that is diagonal or is strongly diagonally dominant~\cite{fg:06,gunlin10,dong15}.

To further improve the quality of convex relaxations for Problem~\eqref{miqp2}, it is natural to study the facial structure of the convex hull of $\S_2$ defined by:
\begin{align}\label{DudeSet}
\S_2 := \Big\{& (x,X,z): X_{11}=x^2_1, \; X_{12}=x_1 x_2, \; X_{22}=x^2_2, \; x_1 (1-z_1) =0, \; x_2(1-z_2) = 0, \nonumber\\
            & x_1, x_2 \geq 0, \; z_1, z_2 \in \{0,1\}\Big\}.
\end{align}
In~\cite{ansbur21}, the authors consider the set $\S_2$ with the additional constraints $x_1, x_2 \leq 1$, and obtain an extended formulation, containing three auxiliary variables, for the convex hull of this set.  Their representation consists of second-order cone constraints and PSD conditions, and hence can be embedded in conic solvers. To this date, obtaining an explicit characterization for the convex hull of $\S_2$ in the original space $(x,X,z)$ remains an open question. The main obstruction in obtaining such an explicit description is that there exists no general methodology to perform projection when nonlinear expressions are involved. This is in contrast with the polyhedral setting in which standard techniques such as Fourier-Motzkin elimination can be applied to obtain the projection in the original space. One of the goals of this paper is to build necessary tools to perform projections of convex sets defined by nonlinear inequalities.

\paragraph{Convex quadratic optimization with indicator variables.}
In the special case where $m = 0$ and $Q_0 \in \Sn^n_{+}$, Problem~\eqref{miqp2} simplifies to
a convex quadratic optimization problem with indicator variables. Due to its important applications in signal processing, portfolio optimization, and machine learning
(see~\cite{Lodi21} for an extensive survey), this problem has been widely studied in the literature~\cite{dong15,jeon17,atagomhan21,hangomata20,ansbur21,fraGen20,weiet22,wei22}.
In~\cite{atagomhan21,hangomata20}, the authors study the convex hull of the epigraph of bivariate convex quadratic functions. Namely, in~\cite{atagomhan21}, the authors consider the nonconvex set
$$\Z^{-}_2:=\Big\{(x,t,z): t \geq d_1 x^2_1-2 x_1 x_2 + d_2 x^2_2, \; x_i(1-z_i)=0, x_i\geq 0, z_i\in \{0,1\}, i \in \{1,2\}\Big\},$$
with $d_1,d_2 > 0$, $d_1 d_2 \geq 1$, and derive $\overline{\conv}(\Z^{-}_2)$ in the original space. Subsequently, in~\cite{hangomata20},
they consider
$$\Z^{+}_2:=\Big\{(x,t,z): t \geq d_1 x^2_1+2 x_1 x_2 + d_2 x^2_2, \; x_i(1-z_i)=0, x_i\geq 0, z_i\in \{0,1\}, i \in \{1,2\}\Big\},$$
with $d_1,d_2 > 0$, $d_1 d_2 \geq 1$ and derive $\overline{\conv}(\Z^{+}_2)$ in original space. Furthermore, utilizing an extended formulation for a relaxation of the convex hull of $\S_2$, they show that the resulting semidefinite programming relaxation outperforms Problem~\eqref{perspR} in terms of relaxation quality but is more expensive to solve due to the added variables associated with the extended formulation.
In~\cite{weiet22}, the authors consider the special case where $Q_0$ is a rank-one matrix, and obtain an explicit description for the convex hull of the set
$$
\Z_{\H}=\Big\{(x,z,t): (q^T x)^2 \leq t, \; x(1-z)=0, \; x \in \R^n, \; z \in \H\Big\},
$$
where $q \in \R^n$ such that $q_i \neq 0$ for all $i \in [n]$, and $\H \subseteq \{0,1\}^n$.

\paragraph{Our contributions.}
In this paper, we obtain an explicit  characterization for the closure of the convex hull of $\S_2$ as defined by~\eqref{DudeSet} in the space of original variables.
Using disjunctive programming, we first obtain a convex extended formulation for $\overline{\conv}(\S_2)$.
We then project out the auxiliary variables to express $\overline{\conv}(\S_2)$ in the original space;
namely, by building on ideas from convex analysis, we show that this projection can be performed via the analytical solution of a parametric
convex optimization problem.  This approach in turn enables us to obtain a piece-wise characterization of the convex hull, in which different
pieces are obtained by intersecting the convex hull with the elements of a partition of the search space. For each of these pieces, while the defining inequalities associated to the boundary of the convex hull are not globally valid, they locally
define the boundary of the convex hull. Hence the corresponding first-order Taylor expansions are supporting hyperplanes of the convex hull and therefore are globally valid.
Indeed, we show that the separation problem over  $\overline{\conv}(\S_2)$ can be solved via a simple algorithm.
This algorithm can readily be incorporated in branch-and-cut based global solvers~\cite{IdaNick18,antigone14,BestuzhevaEtal2021ZR} to improve the quality of existing relaxations for nonconvex
problems containing quadratic sets with indicator variables. Similar cutting plane algorithms based on convexification results of~\cite{ts02,ks:11mp} have been already successfully implemented in the global solver {\tt BARON}~\cite{IdaNick18}.

The rest of this paper is organized as follows. In Section~\ref{state} we provide statements of our main results.
In Section~\ref{separate} we present our separation algorithm.
Section~\ref{proof1} contains the proof of our convex hull characterization. Finally, in Section~\ref{lemmata} we present all technical lemmata that we need to prove our main results.

\section{Main results}
\label{state}

The main purpose of this paper is to characterize the closure of the convex hull of $\S_2$ in the space of original variables.
To present this characterization, we first introduce some notation. Given a set $\C$, we denote its relative interior by ${\rm ri}(\C)$, its closure by $\cl(\C)$,
its boundary by $\partial \C$, and its complement by $\C^c$. We denote by $\B_r(x)$ the open ball of radius $r$ centered at $x$ in the Euclidean norm.
Consider the function $f(u,v) := \frac{u^2}{v}$, $u \in \R$ and $v > 0$. We define the closure of $f$,
denoted by $\hat f$, as follows:
\begin{align}\label{closure}
\hat f(u,v) :=
\begin{cases}
 \frac{u^2}{v}, \qquad &{\rm if}\; v > 0\\
 0,           \; \;  \qquad     &{\rm if} \; u=v=0\\
 +\infty        \qquad      &{\rm if} \; u\neq 0, \; v=0.
\end{cases}
\end{align}
Moreover, consider the function $f(u,v,w) := \frac{u v}{w}$, $u, v \geq 0$ and $w > 0$. We define the closure of $f$, denoted by $\hat f$, as follows:
\begin{align}\label{closure2}
\hat f(u,v, w) :=
\begin{cases}
 \frac{u v}{w}, \qquad &{\rm if}\; w > 0\\
 0,           \; \;  \qquad    & {\rm if} \; u v=w=0\\
 +\infty        \qquad      & {\rm if} \; u\neq 0, \; v \neq 0, \; w = 0.
\end{cases}
\end{align}
For notational simplicity, in the following when we write a function of the form $\frac{u^2}{v}$ or $\frac{u v}{w}$, we imply its closure as defined above.

\vspace{0.1in}

The following theorem states our main result:
\begin{theorem}\label{theTh}
Consider the set $\S_2$ defined by~\eqref{DudeSet}. Then there exists a convex set  $\tilde \S$ such that
$\ri(\conv(\S_2)) \subseteq \tilde \S \subseteq \overline{\conv}(\S_2)$,
and the closure of the convex hull of $\S_2$ is given by:
$$
\overline{\rm \conv}(\S_2) =  \bigcup_{i=1}^8{\cl(\tilde \S \cap \Ri_i)},
$$
where the sets $\Ri_i$, $i \in \{1,\ldots, 8\}$ satisfy $\Ri_i \cap \Ri_j = \emptyset$ for all $i \neq j$ and $\bigcup_{i=1}^8{\Ri_i} \supseteq \overline{\conv}(\S_2)$,
and
\begin{itemize}[leftmargin=*]

\item [I.] if $i \in \{1, 2, 6\}$, then
\begin{align*}
\cl(\tilde \S \cap \Ri_i)
=\Big\{&(x,X,z): X_{11} \geq \frac{x^2_1}{z_1}, \; X_{22} \geq \frac{x^2_2}{z_2}, \; x_1, x_2 \geq 0, \; z_1, z_2 \in [0,1]\Big\}\cap \cl(\Ri_i),
\end{align*}
where
%
\begin{align*}
\Ri_1 := \Big\{&(x, X, z): \; x_1 x_2 (z_1 + z_2-1) \leq X_{12} z_1 z_2, \; X_{12} \max\{z_1,z_2\} \leq x_1 x_2, \; X_{12} >0, \; z_1, z_2>0\Big\}  \\
       &   \cup \Big\{(x, X, z): X_{12} =0 \Big\},\\
\Ri_2:=\Big\{&(x,X,z): \; z_1 \leq z_2, \; X_{12} z_2 > x_1 x_2, \; X_{12} z_1 \leq x_1 x_2, \; X_{12} >0, \; z_1>0,\nonumber\\
                       &  x^2_1 (z_2-z_1)(X_{22} z_2 - x^2_2) \geq z_1 (X_{12} z_2-x_1 x_2)^2\Big\},\\
\Ri_6:=\Big\{&(x,X,z): \; X_{12} z_1 z_2 < x_1 x_2 (z_1 + z_2 -1), \; X_{12} >0, \; z_1, z_2 >0,\\
                        & (1-z_1)(z_1+z_2-1) x^2_1 (X_{22} z_2 - x^2_2) \geq \Big(X_{12} z_1 z_2-x_1 x_2(z_1+z_2-1)\Big)^2\Big\}.
\end{align*}

\item [II.] If $i\in \{3,4\}$, then
\begin{align*}
\cl(\tilde \S \cap \Ri_i)  = \Big\{(x,X,z): \; & X_{22} \geq \frac{x^2_2}{z_2}, \; \Big(X_{11} - \frac{x^2_1}{z_2}\Big)\Big(X_{22}- \frac{x^2_2}{z_2}\Big) \geq \Big(X_{12}- \frac{x_1 x_2}{z_2}\Big)^2, \; x_2 \geq 0, \nonumber\\
&   z_1, z_2 \leq 1 \Big\} \cap \cl(\Ri_i),
\end{align*}
where
\begin{align*}
\Ri_3:=\Big\{&(x,X,z): z_1 < z_2, X_{12} x_2 > X_{22} x_1, z_1(X_{12} z_2 - x_1 x_2)^2 > x^2_1(z_2-z_1)(X_{22} z_2 - x^2_2), \\
              & x_1 \geq 0, \; X_{12} >0, \; z_1>0\Big\},\\
\Ri_4:=\{&(x,X,z): z_2 \leq z_1, \; X_{12} x_2 > X_{22} x_1,\; x_1 \geq 0, \; X_{12} >0, \; z_2>0\}.
\end{align*}


\item [III.] If $i=5$, then
\begin{align*}
\cl(\tilde \S \cap \Ri_i)= \Big\{(x,X,z): \; & X_{22} \geq \frac{x^2_2}{z_2}, \; \Big(X_{11} - \frac{x^2_1}{z_1}\Big)\Big(X_{22} - \frac{x^2_2}{z_1}\Big) \geq \Big(X_{12}  -\frac{x_1 x_2}{z_1}\Big)^2, \;  x_1\geq 0,\nonumber\\
&  z_1, z_2 \leq 1\Big\} \cap \cl(\Ri_5),
\end{align*}
where
$$
\Ri_5 = \{(x,X,z): \; X_{12} z_1 > x_1 x_2, \; X_{22} x_1 \geq X_{12} x_2, \; x_2 \geq 0, \; X_{12} >0, \; z_1, z_2>0\},
$$


\item [IV.] If $i=7$, then
\begin{align*}
\cl(\tilde \S \cap \Ri_i) =\Big\{(x,X,z): \; & X_{11}\geq x^2_1, \; X_{22} \geq \frac{x^2_2}{z_2}, \;(X_{11}-x^2_1) (X_{22}-x^2_2) \geq (X_{12}-x_1 x_2)^2,\; x_1, x_2 \geq 0, \nonumber\\
& z_1, z_2 \leq 1\Big\} \cap \cl(\Ri_7),
\end{align*}
where
\begin{align*}
\Ri_7:=& \Big\{(x,X,z): X_{12} z_1 z_2 < x_1 x_2 (z_1 + z_2 -1), X_{12} >0, z_1, z_2>0, x_1^2 (x_2^2 - X_{22} (1 - z_1)) (X_{22} z_2-x_2^2)\\
 & >  2 x_1 x_2 X_{12} z_1 (X_{22} z_2-x_2^2)- X_{12}^2 (X_{22}(z_1+z_2-1) + x_2^2 (1 - 2 z_1 - z_2(1 -z_1)))\Big\}.
\end{align*}

\item [V.] If $i=8$, then
\begin{align*}
\cl(\tilde \S \cap \Ri_i) =  \Big\{&(x,X,z): \; X_{11} \geq \frac{x^2_1}{z_1}, \; X_{22} \geq \frac{x^2_2}{z_2}, \; x_1, x_2 \geq 0, \; z_1, z_2 \leq 1,\nonumber \\
&z_1 (1-z_2)\Big(X_{11}-\frac{x^2_1}{z_1}\Big) x^2_2 \geq (z_1+z_2-1)\Big(X_{12}\frac{z_1 z_2}{W}-x_1 x_2\Big)^2 \Big\} \cap \cl(\Ri_8),
\end{align*}
where
\begin{align*}
\Ri_8:=&\Big\{(x,X,z): \; X_{12} z_1 z_2 < x_1 x_2 (z_1 + z_2 -1), \; X_{12} >0, \; z_1, z_2 >0\Big\}\setminus (\Ri_6 \cup \Ri_7),
\end{align*}
and
$$
W := (z_1+ z_2-1)- \frac{1}{x_2}\sqrt{(X_{22}z_2-x^2_2)(1-z_1)(z_1+z_2-1)}.
$$
\end{itemize}
\end{theorem}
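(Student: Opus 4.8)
The plan is to exploit the disjunctive structure hidden in the complementarity constraints $x_i(1-z_i)=0$, $z_i\in\{0,1\}$. Fixing $z$ at each of its four binary values splits $\S_2$ into four pieces indexed by $(z_1,z_2)\in\{0,1\}^2$: the piece $(0,0)$ collapses to the single point $x=0$, $X=0$; the piece $(1,0)$ becomes $\{x_2=0,\ X_{12}=X_{22}=0,\ X_{11}=x^2_1,\ x_1\geq 0\}$ and symmetrically for $(0,1)$; and the piece $(1,1)$ becomes $\{x\geq 0,\ X=xx^T\}$. Since $\overline{\conv}(\S_2)$ is the closed convex hull of the union of these four sets, I would first record the closed convex hull of each piece. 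For the two univariate pieces this is exactly the perspective relaxation of Proposition~\ref{n=1}, e.g. $X_{11}z_1\geq x^2_1$; for the $(1,1)$ piece it is $\{x\geq 0,\ X\succeq xx^T,\ X_{12}\geq 0\}$, using that for $2\times 2$ matrices the completely positive and doubly nonnegative cones coincide, so that the hull of the outer products $xx^T$ with $x\geq 0$ is governed by positive semidefiniteness together with entrywise nonnegativity.

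Next I would apply disjunctive programming in the form of Balas' closed-hull theorem, homogenizing each piece with a nonnegative multiplier $\lambda_k$ and enforcing $\sum_k\lambda_k=1$. Introducing split copies of the variables through $x_1=p_{10}+p_{11}$, $x_2=q_{01}+q_{11}$, $X_{11}=X^{10}_{11}+X^{11}_{11}$, $X_{22}=X^{01}_{22}+X^{11}_{22}$, $X_{12}=X^{11}_{12}$, together with $z_1=\lambda_{10}+\lambda_{11}$ and $z_2=\lambda_{01}+\lambda_{11}$, yields a convex, semidefinite-representable extended formulation of $\overline{\conv}(\S_2)$: the univariate copies obey perspective constraints such as $p_{10}^2\leq X^{10}_{11}\,\lambda_{10}$, while the bivariate copy obeys the Schur-complement condition $X^{11}\succeq \tfrac{1}{\lambda_{11}}(p_{11},q_{11})(p_{11},q_{11})^T$ together with $p_{11},q_{11}\geq 0$ and $X^{11}_{12}\geq 0$. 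The closure conventions $\hat f$ fixed before the statement are precisely what make these perspective constraints closed at $\lambda_k=0$, so that the closed-hull version of Balas' theorem applies verbatim.

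The crux is to project this lifted set back onto the original space $(x,X,z)\in\R^7$. Because the defining inequalities are nonlinear, Fourier--Motzkin is unavailable; instead I would fix $(x,X,z)$ and read membership in $\overline{\conv}(\S_2)$ as feasibility of a convex system in the remaining free variables $\mu:=\lambda_{11}$, the splits $p_{11},q_{11}$, and $X^{11}_{11},X^{11}_{22}$ (with $X^{11}_{12}=X_{12}$ forced, since the off-diagonal vanishes in every other disjunct). The linear constraints already pin $\mu\in[\max\{0,z_1+z_2-1\},\min\{z_1,z_2\}]$, $p_{11}\in[0,x_1]$, and $q_{11}\in[0,x_2]$, and the question becomes a parametric convex program whose value function is nonpositive exactly when $(x,X,z)\in\overline{\conv}(\S_2)$. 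I would solve this program analytically through its KKT system, and the principal difficulty is that the optimal active set changes as $(x,X,z)$ varies: whether $\mu$ sits at a box endpoint $\min\{z_1,z_2\}$, at $\max\{0,z_1+z_2-1\}$, or in the interior, and whether $p_{11},q_{11}$ are tight, produces genuinely different closed-form minimizers. Each admissible combination of active constraints yields one closed-form defining inequality valid on one cell of parameter space, and these cells are exactly the regions $\Ri_1,\dots,\Ri_8$; substituting the optimizer back produces the explicit perspective and Schur-complement inequalities of cases I--V, including the eliminated square-root quantity $W$ that appears in the mixed regime $\Ri_8$.

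Finally I would reassemble the pieces and control the closures. I would take $\tilde\S$ to be $\conv(\S_2)$ (or any convex set trapped between $\ri(\conv(\S_2))$ and $\overline{\conv}(\S_2)$), so that on the interior of each cell the locally derived inequality genuinely bounds the hull, while intersecting with $\cl(\Ri_i)$ and then applying the outer closure repairs the behavior on the seams where distinct analytic branches meet. I would then verify directly that the $\Ri_i$ are pairwise disjoint and that their union covers $\overline{\conv}(\S_2)$; these reduce to elementary sign and ordering checks on $z_1$ versus $z_2$, on $X_{12}z_1z_2$ versus $x_1x_2(z_1+z_2-1)$, and on $X_{12}\max\{z_1,z_2\}$ versus $x_1x_2$, allowing me to conclude $\overline{\conv}(\S_2)=\bigcup_{i=1}^8\cl(\tilde\S\cap\Ri_i)$. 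The anticipated main obstacle is the case analysis in the projection step: correctly enumerating the active-set regimes of the parametric program and matching each to its region, especially the regimes of cases III--V where the Schur-complement constraint and a perspective constraint are simultaneously tight, and where care is needed to keep the outer closure from either enlarging or shrinking the hull at the cell boundaries.
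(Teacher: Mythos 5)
Your proposal follows essentially the same route as the paper: the same four-way disjunction on $(z_1,z_2)$, the same convexification of the pieces (perspective sets plus the doubly-nonnegative/completely-positive description of the $(1,1)$ piece), Balas' lifted disjunctive formulation, projection by solving a parametric convex program whose changing active sets generate the regions $\Ri_1,\dots,\Ri_8$, and reassembly of the hull via closures intersected with $\cl(\Ri_i)$. The one caveat is your choice $\tilde\S=\conv(\S_2)$: the paper instead takes $\tilde\S$ to be the projection of the lifted set together with the face $\{(x,X,z):X_{12}=0\}$, which is exactly the set your parametric program characterizes (via the paper's Lemma~\ref{projeq}) and for which the explicit piece descriptions are actually verified — sandwiched sets are not interchangeable under $\cl(\,\cdot\,\cap\Ri_i)$, so proving the stated equalities for $\conv(\S_2)$ itself would require extra boundary arguments that the paper's choice avoids.
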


The proof of Theorem~\ref{theTh} is given in Section~\ref{proof1}.

\vspace{0.1in}

Clearly, the convex hull of $\S_2$ has a complicated structure and is not amenable to conic solvers; however, as we detail in the next section, it can be easily incorporated in branch-and-cut based global solvers~\cite{IdaNick18,antigone14,BestuzhevaEtal2021ZR} for generating strong cutting planes that enhance the quality of existing relaxations.

Given a convex set $\C$, throughout this paper we say that the inequality $q(x) \geq 0$ \emph{supports $\C$ at $\hat x$}, or is \emph{a supporting inequality
for $\C$ at $\hat x $}, if $q(x) \geq 0$ for all $x \in \C$
and $\{x: q(x) = 0\}$ is a supporting hyperplane of $\C$ at $\hat x$.

\begin{remark}\label{rem1}
Let us comment on the importance of the set $\tilde \S$ introduced in the statement of Theorem~\ref{theTh}.
While $\overline{\conv}(\S_2)$ could be described without introducing $\tilde \S$, in order to devise our separation algorithm we need the stronger result stated in Theorem~\ref{theTh}. Indeed, to generate supporting inequalities of $\overline{\conv}(\S_2)$ in our separation algorithm, it is important to cover $\overline{\conv}(\S_2)$ with sets of the form $\cl(\tilde \S \cap \Ri_i)$. Since by assumption $\ri(\conv(\S_2)) \subseteq \tilde \S \subseteq \overline{\conv}(\S_2)$, from Lemma~\ref{bps} it follows that:
$$
\partial (\cl(\tilde \S \cap \Ri_i)) \setminus \partial \Ri_i \subset \partial \overline{\conv}(\S_2).
$$
This important property enables us to locate boundary points of $\overline{\conv}(\S_2)$ at which the corresponding supporting hyperplane serves as a separating inequality.
\end{remark}


We conclude this section by presenting a simple yet strong class of valid inequalities for Problem~\eqref{miqp2}, which are a consequence of Theorem~\ref{theTh}.

\begin{proposition}\label{usefull}
Consider the set $\S_n$ defined by~\eqref{TheSet}. Then for any  $i \neq j \in [n]$, any
supporting inequality of
\begin{equation}\label{valid1}
\left\{(x_i, x_j, X_{ii}, X_{ij}, X_{jj}, z_i):
\begin{pmatrix}
z_i   & x_i    & x_j \\
x_i & X_{ii} & X_{ij} \\
x_j & X_{ij} & X_{jj}
\end{pmatrix} \succeq 0 \right\},
\end{equation}
at a boundary point of~\eqref{valid1} satisfying
\begin{equation}\label{dom1}
X_{ij} z_i > x_i x_j, \quad X_{jj} x_i > X_{ij} x_j, \quad x_i, x_j > 0, \quad  z_i, z_j \in (0,1),
\end{equation}
is a supporting inequality for $\overline{\conv}(\S_2)$, and hence is valid inequality for $\overline{\conv}(\S_n)$.
\end{proposition}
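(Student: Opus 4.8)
The plan is to identify the supporting inequality explicitly and then verify the two ingredients of a supporting inequality for $\overline{\conv}(\S_2)$ — global validity and contact — directly on the finite generating set $\S_2$. By the symmetry of $\S_n$ under relabeling of coordinates I would take $(i,j)=(1,2)$ and write $M=\bigl(\begin{smallmatrix} z_1 & x_1 & x_2\\ x_1 & X_{11} & X_{12}\\ x_2 & X_{12} & X_{22}\end{smallmatrix}\bigr)$, so that~\eqref{valid1} is the cone $\{M\succeq 0\}$. First I would pin down the rank of $M$ at a boundary point $\hat p$ satisfying~\eqref{dom1}: since $\hat z_1>0$, the condition $M\succeq 0$ is equivalent, via the Schur complement $S$ with respect to the $(1,1)$ entry, to $S\succeq 0$, and~\eqref{dom1} forces the entries $b:=X_{12}-x_1x_2/z_1>0$ and $c:=X_{22}-x_2^2/z_1>0$, hence (together with $a:=X_{11}-x_1^2/z_1\ge 0$ and $\det S=0$ on the boundary) $S$ has rank one and $M$ has rank two. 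Thus $\ker\hat M$ is one-dimensional, and every supporting inequality of $\{M\succeq 0\}$ at $\hat p$ equals, up to a positive multiple, $v^T M v\ge 0$, where $v=(v_0,v_1,v_2)$ spans $\ker\hat M$.

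The decisive preliminary step is to read off the sign pattern of $v$. Solving $\hat M v=0$ through the Schur complement gives $(v_1,v_2)\propto(c,-b)$ and $v_0=-(x_1v_1+x_2v_2)/z_1$; using $x_1c-x_2b=X_{22}x_1-X_{12}x_2>0$ from~\eqref{dom1}, I obtain, after fixing the global sign, $v_0<0$, $v_1>0$, $v_2<0$. I expect this to be the heart of the argument: it is precisely this sign structure, dictated by the two strict inequalities in~\eqref{dom1}, rather than any positive-semidefiniteness of $M$, that will restore validity at the points of $\S_2$ where $M$ fails to be PSD. The equality $\det S=0$ also shows that $\hat p$ lies on the curved piece of $\partial\overline{\conv}(\S_2)$ cut out in region $\Ri_5$ of Theorem~\ref{theTh}, which is why~\eqref{dom1} is the natural regime.

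For validity I would evaluate $v^T M v$ on $\S_2$, where $z_1\in\{0,1\}$. If $z_1=1$ then $M=(1,x_1,x_2)^T(1,x_1,x_2)$, so $v^T M v=(v_0+v_1x_1+v_2x_2)^2\ge 0$. If $z_1=0$ then $x_1=0$, whence $X_{11}=X_{12}=0$ and $v^T M v=v_2^2X_{22}+2v_0v_2x_2\ge 0$, because $X_{22}\ge 0$, $v_0v_2>0$, and $x_2\ge 0$; here $M$ is in general not PSD, so this is exactly where the sign pattern of $v$ is used. Validity on $\overline{\conv}(\S_2)$ then follows by taking convex combinations and passing to the closure.

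For the contact property it suffices to produce one point of $\S_2$ on the hyperplane $\{v^T M v=0\}$: taking $z_1=1$, $z_2=0$, $x_2=0$ and $x_1=-v_0/v_1>0$ (feasible since $x_1(1-z_1)=x_2(1-z_2)=0$) gives a point of $\S_2\subseteq\overline{\conv}(\S_2)$ at which $v_0+v_1x_1+v_2x_2=0$, hence $v^T M v=0$. Together with validity, this shows $\{v^T M v=0\}$ supports $\overline{\conv}(\S_2)$, establishing the first assertion. Finally, for any point of $\S_n$ the subvector $(x_i,x_j,X_{ii},X_{ij},X_{jj},z_i,z_j)$ satisfies the defining relations of $\S_2$, so the projection of $\S_n$ onto these coordinates lies in $\S_2$; since the inequality does not involve the remaining coordinates, validity for $\overline{\conv}(\S_2)$ yields validity for $\overline{\conv}(\S_n)$.
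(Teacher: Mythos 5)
Your proof is correct, but it takes a genuinely different route from the paper's. The paper deduces Proposition~\ref{usefull} from its main convex hull characterization: inequalities~\eqref{dom1} place the point in the region $\Ri_5$ of Part~(III) of Theorem~\ref{theTh}; Lemma~\ref{lem1} rewrites the spectrahedron~\eqref{valid1} as the two inequalities~\eqref{bpoints} plus $X_{jj}\geq x_j^2/z_i$; and the strict inequalities in~\eqref{dom1} make that last constraint slack, so by Lemma~\ref{bps} the boundaries of~\eqref{valid1} and of $\overline{\conv}(\S_2)$ coincide in a neighborhood of the given point, whence any supporting inequality of one supports the other \emph{at that same point}. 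You bypass Theorem~\ref{theTh} entirely: you use the rank-two structure of $\hat M$ (via the Schur complement, whose off-diagonal and corner entries are strictly positive under~\eqref{dom1}) to show the kernel is one-dimensional and hence that the supporting inequality at $\hat p$ is, up to positive scaling, the single linear inequality $v^T M v \geq 0$; you extract the sign pattern $v_0<0$, $v_1>0$, $v_2<0$ from~\eqref{dom1}; you verify validity by direct evaluation on the generators of $\S_2$ --- the case $z_1=0$, where $M$ need not be PSD, being exactly where the sign pattern is needed --- and you exhibit an explicit contact point in $\S_2$ itself. Both arguments are sound. The trade-off: the paper's route additionally shows that the given boundary point lies on $\partial\overline{\conv}(\S_2)$ and that support occurs there, which is the property its separation algorithm exploits; your argument only establishes support at a different, explicitly constructed point of $\S_2$, which suffices for the proposition as stated under the paper's definition of a supporting inequality. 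In exchange, your proof is self-contained, produces the cut in closed algebraic form, proves tightness on the original nonconvex set rather than merely on its closed hull, and would extend verbatim to settings where the full convex hull description is unavailable.
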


The proof of Proposition~\ref{usefull} is given at the end of Section~\ref{separate}.

\begin{remark}\label{rem2}
In~\cite{atagom19}, the authors propose the following convex relaxation for $\overline{\conv}(\S_2)$:
\begin{equation}\label{rank1}
\left\{(x_i, x_j, X_{ii}, X_{ij}, X_{jj}, z_i, z_j):
\begin{pmatrix}
z_i+ z_j   & x_i    & x_j \\
x_i & X_{ii} & X_{ij} \\
x_j & X_{ij} & X_{jj}
\end{pmatrix} \succeq 0 \right\},
\end{equation}
and demonstrate that the addition of such constraints to Problem~\eqref{perspR} improves the quality of convex relaxations for Problem~\eqref{miqp}.
By Proposition~\ref{usefull}, any supporting inequality of set~\eqref{valid1} at a point satisfying~\eqref{dom1}
implies a supporting inequality of set~\eqref{rank1}.
\end{remark}


 The incorporation of the proposed relaxations in a branch-and-cut based global solver and performing an extensive computational study is a subject of future research.

\section{The separation algorithm}
\label{separate}

In this section, by leveraging on our convex hull characterization stated in Theorem~\ref{theTh}, we present a separation algorithm over the closure of the convex hull of $\S_2$.
This algorithm can readily be incorporated in branch-and-cut based global solvers~\cite{IdaNick18,antigone14,BestuzhevaEtal2021ZR} to improve the quality of existing relaxations for nonconvex
problems containing quadratic sets with indicator variables. Define the set
\begin{align}\label{initR}
\tilde\C:=\Big\{(x,X,z):\; & X_{11} \geq \frac{x^2_1}{z_1}, \; X_{22} \geq \frac{x^2_2}{z_2}, \; (X_{11}-x^2_1)(X_{22}-x^2_2) \geq (X_{12}-x_1 x_2)^2, \; X_{12} \geq 0, \nonumber\\
& x_1, x_2 \geq 0, \; z_1, z_2 \in [0,1]\Big\}.
\end{align}
From Proposition~\ref{n=1} and Lemma~\ref{lem1} it follows that $\tilde \C$ is a convex relaxation of $\S_2$.
We start by defining the separation problem:

\paragraph{The separation problem:} Given a point  $(\tilde x, \tilde X, \tilde z) \in \tilde \C$, decide whether $(\tilde x, \tilde X, \tilde z)$ is in $\overline{\conv}(\S_2)$ or not, and in the latter case, find a supporting inequality for $\overline{\conv}(\S_2)$ that is violated by $(\tilde x, \tilde X, \tilde z)$.
\vspace{0.1in}


The outline of our separation algorithm is as follows.

\bigskip

\begin{algorithm}[H]
\SetKwFunction{sep}{Separate}
\SetAlgorithmName{\sep}{\sep}{}
\KwIn{A point $(\tilde x, \tilde X, \tilde z) \in \tilde \C$}
\KwOut{A Boolean {\tt Inside}
together with a supporting inequality $h(x,X,z) \geq 0$ for $\overline{\conv}(\S_2)$, violated by $(\tilde x, \tilde X, \tilde z)$ if {\tt Inside = false}.}
Find the unique  $k \in \{1,\ldots,8\}$ such that $(\tilde x, \tilde X, \tilde z) \in \Ri_k$\\
\If {all inequalities defining $\cl(\tilde \S \cap \Ri_k)$ are satisfied by $(\tilde x, \tilde X, \tilde z)$,} {
{\tt Inside = true}\\
\Return
}
\Else{
 {\tt Inside = false}\\

 \If {$k \in \{3,4\}$,}{
     Let $q(x, X, z) = \Big(X_{11} - \frac{x^2_1}{z_2}\Big)\Big(X_{22}- \frac{x^2_2}{z_2}\Big) - \Big(X_{12}- \frac{x_1 x_2}{z_2}\Big)^2$ \\
 }
 \ElseIf {$k = 5$,}{
    Let $q(x, X, z) = \Big(X_{11} - \frac{x^2_1}{z_1}\Big)\Big(X_{22} - \frac{x^2_2}{z_1}\Big) - \Big(X_{12}  -\frac{x_1 x_2}{z_1}\Big)^2$ \\
 }
 \ElseIf{$k = 8$,}{
    Let $q(x, X, z) = z_1 (1-z_2)\Big(X_{11}-\frac{x^2_1}{z_1}\Big) x^2_2 - (z_1+z_2-1)\Big(X_{12}\frac{z_1 z_2}{W}-x_1 x_2\Big)^2$\\
 }
 Define the point $(\hat x, \hat X, \hat z)$ with components equal to $(\tilde x, \tilde X, \tilde z)$ except for $\hat X_{11}$ which is chosen so that
 $q(\hat x, \hat X, \hat z)  =0$. Let $h(x, X, z)$ be the first-order Taylor expansion of $q(x, X, z)$ at $(\hat x, \hat X, \hat z)$.\\
 \Return $h(x, X, z) \geq 0$
 }
\caption{A separation algorithm over $\overline{\conv}(\S_2)$}
\label{alg: sep}
\end{algorithm}

\bigskip

The following proposition establishes the correctness of
our separation algorithm.

\begin{proposition}\label{sepProp}
  Algorithm~\sep solves the separation problem over $\overline{\conv}(\S_2)$.
\end{proposition}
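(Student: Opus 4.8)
The plan is to verify that Algorithm~\sep correctly resolves the two outputs required by the separation problem. Given an input $(\tilde x, \tilde X, \tilde z) \in \tilde\C$, the algorithm first locates the unique index $k$ with $(\tilde x, \tilde X, \tilde z) \in \Ri_k$; this is well-defined because the $\Ri_i$ are pairwise disjoint and, since the input lies in $\tilde\C \subseteq \overline{\conv}(\S_2) \subseteq \bigcup_i \Ri_i$, their union covers the input. I would first dispatch the easy case: if all inequalities defining $\cl(\tilde\S \cap \Ri_k)$ hold at the input, then since $\cl(\tilde\S \cap \Ri_k) \subseteq \overline{\conv}(\S_2)$ by Theorem~\ref{theTh}, the point is inside and returning \texttt{Inside = true} is correct. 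The remaining work is to show that when some defining inequality is violated, the returned inequality $h(x,X,z) \geq 0$ is a genuine supporting inequality of $\overline{\conv}(\S_2)$ that cuts off the input.

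The heart of the argument is the \texttt{Inside = false} branch. First I would confirm that in each relevant membership region the \emph{only} inequality that can be violated is the designated quadratic constraint $q(x,X,z) \geq 0$ (the curved boundary), not the perspective bounds $X_{11} \geq x_1^2/z_1$, $X_{22} \geq x_2^2/z_2$ etc.; these latter bounds are already enforced by membership in $\tilde\C$ via Proposition~\ref{n=1}. In regions $i \in \{1,2,6\}$ and $i=7$, I expect the characterization to reduce (on the relevant region) so that the binding inequality is again expressible through $q$, or else is automatically satisfied, so that the three chosen formulas for $q$ in the algorithm exhaust the curved boundaries one can cross. Then the key geometric step: starting from $(\tilde x, \tilde X, \tilde z)$ with $q < 0$, I increase the single coordinate $X_{11}$ until $q = 0$, producing $(\hat x, \hat X, \hat z)$. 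I must verify that such an $\hat X_{11}$ exists and that $q$ is monotone increasing in $X_{11}$ along this ray — each $q$ is, up to sign, a Schur-complement-type determinant, so $\partial q/\partial X_{11}$ equals a nonnegative factor like $X_{22} - x_2^2/z_\ell$, which is positive off a measure-zero set; this guarantees a unique crossing and that $\hat X_{11} > \tilde X_{11}$.

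Next I would establish that $(\hat x, \hat X, \hat z)$ is a boundary point of $\overline{\conv}(\S_2)$ and that the first-order Taylor expansion $h$ of $q$ at $\hat\cdot$ supports it there. This is exactly where Remark~\ref{rem1} and Lemma~\ref{bps} enter: since $\ri(\conv(\S_2)) \subseteq \tilde\S \subseteq \overline{\conv}(\S_2)$, one has $\partial(\cl(\tilde\S \cap \Ri_k)) \setminus \partial\Ri_k \subset \partial\overline{\conv}(\S_2)$. The constructed point $\hat\cdot$ lies on the curved boundary $\{q = 0\}$ and, because it differs from the interior input only in $X_{11}$, it stays in the (relative) interior of $\Ri_k$ rather than on $\partial\Ri_k$; hence it is a boundary point of $\overline{\conv}(\S_2)$ at which $q \geq 0$ locally defines the convex hull. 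Because $q$ is a concave function on the relevant region (again a determinant of a matrix that is, up to the perspective substitutions, affine, whose concavity on the feasible cone I would invoke from the technical lemmata), its graph lies below its tangent, so the tangent inequality $h \geq 0$ is globally valid on $\cl(\tilde\S \cap \Ri_k)$, and by the covering $\overline{\conv}(\S_2) = \bigcup_i \cl(\tilde\S \cap \Ri_i)$ it is valid on all of $\overline{\conv}(\S_2)$. Finally, linearizing at $\hat\cdot$ and evaluating at the input, $h(\tilde x, \tilde X, \tilde z) = q(\hat\cdot) + \nabla q(\hat\cdot)\cdot((\tilde x,\tilde X,\tilde z)-(\hat x,\hat X,\hat z)) = 0 + (\partial q/\partial X_{11})(\hat\cdot)\,(\tilde X_{11} - \hat X_{11}) < 0$, since the partial derivative is positive and $\tilde X_{11} < \hat X_{11}$; thus the input is separated.

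I expect the main obstacle to be the global validity of the tangent inequality across the \emph{entire} convex hull rather than merely within the single piece $\cl(\tilde\S \cap \Ri_k)$ — i.e. ruling out that $h \geq 0$ is violated somewhere in a neighboring region $\Ri_j$. This requires either the concavity of $q$ on a domain large enough to contain the other pieces, or an argument that each $q$ agrees on overlaps with a globally concave extension; the perspective terms $x_\ell^2/z_\ell$ and the auxiliary quantity $W$ make these concavity verifications delicate, and I anticipate leaning on the technical lemmata of Section~\ref{lemmata} to certify that the relevant determinantal expressions are concave (equivalently, that the associated matrices are PSD) over the cone on which the argument is run. A secondary subtlety is confirming that increasing $X_{11}$ alone always reaches $\{q=0\}$ without exiting $\cl(\Ri_k)$, which I would handle by checking that none of the region-defining inequalities of $\Ri_k$ constrain $X_{11}$ from above.
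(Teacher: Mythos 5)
Your skeleton follows the paper's proof quite closely---unique region $\Ri_k$, the inside test, constructing a boundary point by moving $X_{11}$ onto $\{q=0\}$, invoking Lemma~\ref{bps}, and computing $h(\tilde x,\tilde X,\tilde z)<0$---but the step you yourself flag as the ``main obstacle'' is one your proposed resolution cannot close. Concavity of $q$ is simply false: for $k\in\{3,4\}$, fixing $(x,z)$ and $X_{12}$ and restricting $q=(X_{11}-x_1^2/z_2)(X_{22}-x_2^2/z_2)-(X_{12}-x_1 x_2/z_2)^2$ to the line $X_{11}=X_{22}=t$ gives a strictly convex parabola in $t$, so $q$ is not concave on any full-dimensional set, and the $k=8$ case (where $W$ involves a square root of the variables) is worse still. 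The missing idea is that no concavity certificate is needed: once Lemma~\ref{bps} yields $\partial\overline{\conv}(\S_2)\cap\B_r(\hat x,\hat X,\hat z)=\{q=0\}\cap\B_r(\hat x,\hat X,\hat z)$, the linearization $\{h=0\}$ is a hyperplane through a boundary point of the \emph{convex} set $\overline{\conv}(\S_2)$ with the set lying locally on one side, and a hyperplane that locally supports a convex set at a boundary point supports it globally (any point of the set strictly on the wrong side would, via segments to points of the set near $(\hat x,\hat X,\hat z)$, force wrong-side points inside the ball). This is exactly how the paper concludes validity, and it makes your worry about neighboring regions $\Ri_j$ moot.

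There are three further genuine gaps. First, your covering argument rests on a backwards inclusion: $\tilde\C$ is a \emph{relaxation}, so $\overline{\conv}(\S_2)\subseteq\tilde\C$, not the reverse; since the input may lie outside the hull, its membership in some $\Ri_k$ requires the stronger covering $\bigcup_{i=1}^{8}\Ri_i\supseteq\tilde\C$, which the paper extracts from the proof of Theorem~\ref{theTh} (the regions cover $\bar\C\supseteq\tilde\C$). Second, you only ``expect'' that for $k\in\{1,2,6\}$ and $k=7$ the binding inequality reduces to one of the three $q$'s or is automatically satisfied; what must actually be proved---and is needed for the algorithm to be well defined, since it assigns no $q$ to those $k$---is that the false branch is never reached there: for $k\in\{1,2,6,7\}$ one has $\tilde\C\cap\cl(\Ri_k)\subseteq\cl(\tilde\S\cap\Ri_k)$, so a point of $\tilde\C\cap\Ri_k$ cannot violate any defining inequality. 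Third, your existence argument for $\hat X_{11}$ fails exactly on the set you dismiss as measure zero: if $\tilde X_{22}=\tilde x_2^2/\tilde z_2$ (such inputs do occur in $\Ri_3\cap\tilde\C$, e.g.\ with $x_1=0$), then $q=-(X_{12}-x_1x_2/z_2)^2$ is independent of $X_{11}$ and never vanishes, so no crossing exists; the paper repairs this by perturbing $\hat X_{22}=\tilde X_{22}+\varepsilon$, which is legitimate because every $\Ri_k$-inequality involving $X_{22}$ is strict. Relatedly, the cases $\tilde z_1=0$ or $\tilde z_2=0$ must be disposed of first (such points lie in $\Ri_1$) before differentiability and Lemma~\ref{bps} can be invoked at all.
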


\begin{proof}
From the proof of Theorem~\ref{theTh} it follows that $\cup_{i=1}^8{\Ri_i} \supseteq \tilde \C$. Moreover, since by construction
$\Ri_i \cap \Ri_j = \emptyset$ for all $i \neq j$, we conclude that there exists a unique $k \in \{1,\ldots,8\}$ such that $(\tilde x, \tilde X, \tilde z) \in \Ri_k $.
If all inequalities defining $\cl(\tilde \S \cap \Ri_k)$ are satisfied, then $(\tilde x, \tilde X, \tilde z) \in \overline{\conv}(\S_2)$,
the algorithm sets {\tt Inside = true}, and terminates.
Otherwise, there exists an inequality defining $\cl(\tilde \S \cap \Ri_k)$ that is violated by $(\tilde x, \tilde X, \tilde z)$.
By contradiction, assume that $k \in \{1,2,6,7\}$.
Since by assumption we have $(\tilde x, \tilde X, \tilde z) \in \tilde C$, and from the descriptions of $\cl(\tilde \S \cap \Ri_k)$ and $\Ri_k$ it holds $\tilde C\cap \cl(\Ri_k) \subseteq \cl(\tilde \S \cap \Ri_k)$, it follows the contradiction
$(\tilde x, \tilde X, \tilde z) \in \cl(\tilde \S \cap \Ri_k)$. Therefore,
it suffices to consider $k \in \{3,4,5,8\}$. It can be checked that for any point with $\tilde z_1 = 0$ or $\tilde z_2 = 0$, we have $(\tilde x, \tilde X, \tilde z) \in \Ri_1$.
Henceforth, we can assume that $\tilde z_1, \tilde z_2 > 0$.

First suppose that $k \in \{3,4\}$; since $(\tilde x, \tilde X, \tilde z) \in \tilde \C$, it follows that in this case the inequality
\begin{equation}\label{hope2}
\Big(X_{11} - \frac{x^2_1}{z_2}\Big)\Big(X_{22}- \frac{x^2_2}{z_2}\Big) - \Big(X_{12}- \frac{x_1 x_2}{z_2}\Big)^2 \geq 0
\end{equation}
must be violated by $(\tilde x, \tilde X, \tilde z)$. Let us denote the above inequality by $q(x,X,z) \geq 0$.
Now consider a point $(\hat x, \hat X, \hat z)$ whose components are equal to $(\tilde x, \tilde X, \tilde z)$ except for $\hat X_{11}, \hat X_{22}$, which are chosen as follows.
If  $\tilde X_{22}> \frac{\tilde x^2_2}{\tilde z_2}$, we set $\hat X_{22} = \tilde X_{22}$, while if
$\tilde X_{22}= \frac{\tilde x^2_2}{\tilde z_2}$, we set $\hat X_{22} = \tilde X_{22}+\varepsilon$ for some $\varepsilon > 0$ such that the defining inequalities of $\Ri_k$ involving $X_{22}$ remain satisfied. Notice that such an $\varepsilon$ always exists as all inequalities involving $X_{22}$ are strict.
Next, we set $\hat X_{11}$ so that $q(\hat x, \hat X, \hat z) = 0$. Since by assumption $(\tilde x, \tilde X, \tilde z) \in \Ri_k$ and since the inequalities defining $\Ri_k$ do not contain the variable $X_{11}$, we conclude that $(\hat x, \hat X, \hat z) \in \Ri_k$.

We now use Lemma~\ref{bps} with $\mathcal A=\{(x,X,z): z_1, z_2 > 0\}$, $\Ri= \Ri_k$ and $\C = \tilde \S$, to argue that there exists $r>0$ such that
\begin{equation}\label{supp}
\partial \overline{\conv}(\S_2) \cap \B_r(\hat x, \hat X, \hat z) = \{(x,X,z): q(x,X,z)=0\} \cap \B_r(\hat x, \hat X, \hat z).
\end{equation}
First note at any point with $z_1, z_2 > 0$ all inequalities defining $\tilde \S \cap \Ri_k$ are continuously differentiable. Moreover, after a possibly small perturbation, one can ensure that all inequalities defining $\Ri_k$ are strictly satisfied at $(\hat x, \hat X, \hat z)$. The validity of assumption~\eqref{ass} of Lemma~\ref{bps} follows from the fact that for every $s > 0$ and every point
$(x_1, x_2, X_{11}, X_{12}, X_{22}, z_1, z_2)\in \tilde \S\cap \Ri_k$ such that $q(x,X,z)=0$, we have $(x_1, x_2, X_{11}-\frac{s}{2}, X_{12}, X_{22}, z_1, z_2)\in \B_s(\hat x, \hat X, \hat z) \setminus \tilde \S \cap \Ri_k$. Moreover, $\nabla q(\hat x, \hat X, \hat z) \neq 0$, because
$\frac{\partial q}{\partial X_{11}} (\hat x, \hat X, \hat z)= \hat X_{22}-\frac{\hat x^2_2}{\hat z_2} > 0$.
Therefore, using the fact that $\partial \overline{\conv}(\S_2) = \partial \tilde \S$, relation~\eqref{supp} follows.
Denoting by $h(x, X, z)$ the first-order Taylor expansion of $q(x, X, z)$ at $(\hat x, \hat X, \hat z)$, we conclude that $h(x, X, z) \geq 0$ is a supporting inequality for
$\overline{\conv}(\S_2)$ that is violated by $(\tilde x, \tilde X, \tilde z)$.

Similarly, if $k =5$, then the inequality
$$
\Big(X_{11} - \frac{x^2_1}{z_1}\Big)\Big(X_{22} - \frac{x^2_2}{z_1}\Big) - \Big(X_{12}  -\frac{x_1 x_2}{z_1}\Big)^2 \geq 0,
$$
must be violated and if $k = 8$, then the inequality
$$
z_1 (1-z_2)\Big(X_{11}-\frac{x^2_1}{z_1}\Big) x^2_2 - (z_1+z_2-1)\Big(X_{12}\frac{z_1 z_2}{W}-x_1 x_2\Big)^2 \geq 0,
$$
must be violated. Since none of inequalities defining $\Ri_5$ or $\Ri_8$ contain
variable $X_{11}$, we can use the method described above to generate supporting inequalities for $\overline{\conv}(\S_2)$ that are violated by $(\tilde x, \tilde X, \tilde z)$.
\end{proof}


We conclude this section by proving Proposition~\ref{usefull}, which follows from Theorem~\ref{theTh} and Proposition~\ref{sepProp}.

\paragraph{Proof of Proposition~\ref{usefull}.}
Denote by $(\tilde x_i, \tilde x_j, \tilde X_{ii}, \tilde X_{ij}, \tilde X_{jj}, \tilde z_i)$ a boundary point of set~\eqref{valid1} satisfying inequalities~\eqref{dom1}.
By letting $i=1$ and $j =2$, inequalities~\eqref{dom1} imply that $(\tilde x_i, \tilde x_j, \tilde X_{ii}, \tilde X_{ij}, \tilde X_{jj}, \tilde z_i)\in \Ri_5$, where  $\Ri_5$ is defined in Part~(III) of Theorem~\ref{theTh}. Hence, by Lemma~\ref{bps}, the remaining inequalities in the description of $\cl(\tilde \S \cap \Ri_5)$
define the boundaries of $\overline{\conv}(\S_2)$:
\begin{equation}\label{bpoints}
X_{ii} \geq \frac{x^2_i}{z_i}, \quad \Big(X_{ii} - \frac{x^2_i}{z_i}\Big)\Big(X_{jj} - \frac{x^2_j}{z_i}\Big) \geq \Big(X_{ij}  -\frac{x_i x_j}{z_i}\Big)^2.
\end{equation}
By Lemma~\ref{lem1}, the convex set~\eqref{valid1} can be equivalently written as inequalities~\eqref{bpoints} together with the inequality
\begin{equation}\label{rbnd}
X_{jj} \geq \frac{x^2_j} {z_i}.
\end{equation}
From $\tilde X_{ij} \tilde z_i > \tilde x_i \tilde x_j$ and $\tilde X_{jj} \tilde x_i > \tilde X_{ij} \tilde x_j$, it follows that
$\tilde X_{jj} > \frac{\tilde x^2_j}{\tilde z_i}$, implying that inequality~\eqref{rbnd} does not define the boundary of~\eqref{valid1} at
$(\tilde x_i, \tilde x_j, \tilde X_{ii}, \tilde X_{ij}, \tilde X_{jj}, \tilde z_i)$. Hence, any supporting inequality of set~\eqref{valid1} at a boundary point satisfying~\eqref{dom1} is a supporting inequality for $\overline{\conv}(\S_2)$, and is a valid inequality for $\overline{\conv}(\S_n)$,
as $\overline{\conv}(\S_n) \subset \{(x,X,z): (x_i, x_j, X_{ii}, X_{ij}, X_{jj}, z_i, z_j) \in \overline{\conv}(\S_2), \; \forall i \neq j \in [n]\}$.


\section{Proof of Theorem~1}
\label{proof1}

The purpose of this section is to prove Theorem~\ref{theTh}. Due to the length of the proof, we will redefine some notations used in the statement
of Theorem~\ref{theTh} along the way.
We start by obtaining an extended formulation for the convex hull of $\S_2$ using a standard disjunctive programming technique~\cite{rf70,b85,cs99}. Subsequently, by projecting out the auxiliary variables, we obtain
an explicit description of the convex hull in the space of original variables. This general framework has been utilized to characterize the convex envelope of nonconvex
functions~\cite{ks:11mp,ks:11jogo}, as well as the convex hull of disjunctive sets~\cite{bona15,nguyen18}.

\subsection{Disjunctive Formulation}
Let us partition the nonconvex set $\S_2$ defined by~\eqref{DudeSet} as
$$\S_2 = \P_1 \cup \P_2 \cup \P_3 \cup \P_4,$$
where each $\P_i$, $i \in \{1,\ldots,4\}$, is obtained from $\S_2$ by assigning different binary values to $(z_1, z_2)$:
\begin{align*}
& \P_1 := \{(x,X,z):  z_1 = z_2 = 0, \; x_1=x_2 = X_{11}=X_{12}=X_{22} = 0\},  \\
& \P_2 := \{(x,X,z): z_1 = 1, z_2 = 0, \; X_{11}= x^2_1, \; x_2=X_{12}=X_{22}=0,\; x_1 \geq 0\}, \\
& \P_3 := \{(x,X,z): z_1 = 0, z_2 = 1, \; x_1=X_{11}= X_{12} =0, \; X_{22}=x^2_2, \; x_2 \geq 0\},  \\
& \P_4 := \{(x,X,z): z_1 = z_2 = 1, \; X_{11}= x^2_1, \; X_{12} =x_1 x_2, \; X_{22}=x^2_2, \; x_1, x_2 \geq 0\}.
\end{align*}
To construct the closure of the convex hull of $\S_2$, we employ a sequential approach, where we first convexify each $\P_i$, $i \in \{1,\ldots,4\}$ and then using Lemma~\ref{lemClose}, we obtain
\begin{equation}\label{seq1}
\overline{\conv}(\S_2) = \overline{\conv} \Big(\overline{\conv}(\P_1) \cup \overline{\conv}(\P_2) \cup \overline{\conv}(\P_3)\cup \overline{\conv}(\P_4)\Big).
\end{equation}
Moreover, it can be checked that:
\begin{align}\label{seq2}
& \overline{\conv}(\P_1) = \{(x,X,z):  z_1 = z_2 = 0, \; x_1=x_2 = X_{11}=X_{12}=X_{22} = 0\}, \nonumber\\
& \overline{\conv}(\P_2) = \{(x,X,z): z_1 = 1, z_2 = 0, \; X_{11} \geq x^2_1, \; x_2=X_{12}=X_{22}=0,\; x_1 \geq 0\}, \nonumber\\
& \overline{\conv}(\P_3) = \{(x,X,z): z_1 = 0, z_2 = 1, \; x_1=X_{11}= X_{12} =0, \; X_{22} \geq x^2_2, \; x_2 \geq 0\},  \\
& \overline{\conv}(\P_4) = \Bigg\{(x, X, z): z_1 = z_2 = 1, \; \begin{pmatrix}
1   & x_1    & x_2 \\
x_1 & X_{11} & X_{12} \\
x_2 & X_{12} & X_{22}
\end{pmatrix} \succeq 0, x_1 \geq 0, x_2 \geq 0, X_{12} \geq 0\Bigg\},\nonumber
\end{align}
where to construct $\overline{\conv}(\P_4)$, we made use of a well-known result stating that for $n \leq 4$, the cone of doubly nonnegative matrices
coincides with the cone of completely positive matrices (see for example~\cite{burer15}).
By Lemma~\ref{lem1}, the following is an equivalent description of $\overline{\conv}(\P_4)$, which we will use for our subsequent
derivations:
\begin{align}\label{seq3}
\overline{\conv}(\P_4) = \Big\{ & (x,X,z): z_1 = z_2 = 1, \; X_{11} \geq x^2_1, \; X_{22} \geq x^2_2, \; \nonumber\\
& (X_{11}-x^2_1)(X_{22}-x^2_2) \geq (X_{12}-x_1 x_2)^2, x_1 \geq 0, \; x_2 \geq 0, \; X_{12} \geq 0\Big\}.
\end{align}

\subsection{A convex extended formulation for the convex hull}

The convex hull of $\S_2$ can be obtained by taking the convex hull of the union of four convex sets as defined by~\eqref{seq1} and~\eqref{seq2}.
Using the standard disjunctive programming technique~\cite{rf70,b85}, the closure of the convex hull of $\S_2$  is given by:
$$\overline{\conv}(\S_2) = \cl\Big\{(x,X,z): \; \exists (x,X,z,\tilde x, \tilde X,\lambda) \in  \Sigma \Big\},$$
where
$\tilde x = (\tilde x^1,\ldots, \tilde x^4)$, $\tilde X = (\tilde X^1,\ldots, \tilde X^4)$, with $\tilde x^i =(\tilde x_1^i, \tilde x_2^i)\in \R^2$ and
 $\tilde X^i =(\tilde X_{11}^i, \tilde X_{12}^i, \tilde X_{22}^i)\in \R^3$ for all $i \in \{1,\ldots,4\}$,
and
\begin{align}\label{disj}
\Sigma := \Big\{&(x,X,z,\tilde x, \tilde X,\lambda): x = \sum_{i=1}^4{\tilde x^i}, \; X = \sum_{i=1}^4{\tilde X^i}, \; z = \sum_{i=1}^4{\lambda_i z^i}, \; \sum_{i=1}^4{\lambda_i} = 1, \; \lambda_i \geq 0 \; \forall i \in \{1,\ldots,4\},  \nonumber\\
&\Big(\frac{\tilde x^i}{\lambda_i}, \frac{\tilde X^i}{\lambda_i}, z^i \Big) \in \overline{\conv}(\P_i)\; {\rm if} \;
\lambda_i >0, \; \tilde x^i = \tilde X^i = 0 \;  {\rm if} \;
\lambda_i =0\Big\}.
\end{align}
It is important to note that $\Sigma$ is a convex set;  indeed, the convexity of $\Sigma$ follows from the fact that the inverse image of a convex set under the perspective function is convex (see for example Section~2.3.3 of~\cite{bv:04}).
Substituting~\eqref{seq2} and~\eqref{seq3} in~\eqref{disj}, we get:
\begin{align}
\Sigma = \Big\{&(x,X,z,\tilde x, \tilde X,\lambda):\nonumber\\
&\lambda_1+  \lambda_2 +  \lambda_3 +  \lambda_4 =1, \; \lambda_i \geq 0 \; \forall i \in \{1,\ldots,4\}, \; z_1 = \lambda_2 + \lambda_4, \; z_2 = \lambda_3 + \lambda_4, \nonumber\\
& x_1 = \tilde x^2_1 + \tilde x^4_1, \; X_{11} = \tilde X^2_{11} + \tilde X^4_{11}, \;
x_2 = \tilde x^3_2 + \tilde x^4_2, \; X_{22} = \tilde X^3_{22} + \tilde X^4_{22}, \; X_{12} = \tilde X^4_{12}\nonumber\\
&\begin{cases}
\lambda_2 \tilde X^2_{11} \geq (\tilde x^2_1)^2, \; \tilde x^2_1 \geq 0, \qquad & {\rm if}\; \lambda_2 \neq 0\nonumber\\
\tilde x^2_1 = \tilde X^2_{11} = 0,  \qquad \qquad \qquad &{\rm otherwise}
\end{cases}\\
&\begin{cases}
\lambda_3 \tilde X^3_{22} \geq (\tilde x^3_2)^2, \; \tilde x^3_2 \geq 0, \qquad & {\rm if}\; \lambda_3 \neq 0\nonumber\\
\tilde x^3_2 = \tilde X^3_{22} = 0,  \qquad \qquad \qquad & {\rm otherwise}
\end{cases}\\
&\begin{cases}
& \lambda_4 \tilde X^4_{11} \geq (\tilde x^4_1)^2, \; \lambda_4 \tilde X^4_{22} \geq (\tilde x^4_2)^2, \; (\lambda_4\tilde X^4_{11}-(\tilde x^4_1)^2)(\lambda_4 \tilde X^4_{22}-(\tilde x^4_2)^2) \geq (\lambda_4 \tilde X^4_{12}-\tilde x^4_1 \tilde x^4_2)^2,\nonumber\\
&\tilde x^4_1 \geq 0, \; \tilde x^4_2 \geq 0, \; \tilde X^4_{12} \geq 0, \qquad \qquad \qquad {\rm if}\; \lambda_4 \neq 0\nonumber\\
& \tilde x^4_1 =\tilde x^4_2 = \tilde X^4_{11} = \tilde X^4_{12}=\tilde X^4_{22} = 0,  \; \qquad {\rm otherwise}
\end{cases}\nonumber\\
\Big\}.\label{sbar}
\end{align}
%
Before proceeding to the projection of $\Sigma$, we characterize a face of $\overline{\conv}(\S_2)$ defined as
\begin{equation}\label{fface}
\F:=\{(x,X,z) \in \overline{\conv}{(\S_2)}:\;  X_{12} = 0\}.
\end{equation}
This will in turn simplify our subsequent derivations.
First, from Proposition~\ref{n=1} it follows that
$\F \subseteq \C$, where
$$\C := \Big\{(x,X,z): X_{11} z_1 \geq x^2_1, \; X_{22} z_2 \geq x^2_2,\; X_{12} =0, \; x_1,x_2 \geq 0, \; X_{11}, X_{22} \geq 0, \; z_1,z_2 \in [0,1]\Big\}.$$
Second, letting $X_{12} =0$ and $\lambda_4 = 0$ in~\eqref{sbar} gives $\C \subseteq \F$, implying $\F = \C$. We have thus shown
\begin{equation}\label{face3}
\overline{\conv}(\S_2) \cap \{(x,X,z): X_{12} = 0\} = \C.
\end{equation}
%
%
Henceforth, by Lemma~\ref{drs}, we can assume that $X_{12} > 0$; this in turn implies $\lambda_4 > 0$.
%
We then obtain the following description of the closure of the convex hull of $\S_2$:
$$\overline{\conv}(\S_2) = \cl\Big\{(x,X,z): \; \exists (x,X,z,\tilde x, \tilde X,\lambda) \in  \Sigma' \Big\},$$
where
\begin{align}
\Sigma' := \Big\{&(x,X,z,\tilde x, \tilde X,\lambda): X_{12} > 0,\nonumber\\
&\lambda_1+  \lambda_2 +  \lambda_3 +  \lambda_4 =1, \; \lambda_i \geq 0 \; \forall i \in \{1,2,3\}, \; \lambda_4 >0, \; z_1 = \lambda_2 + \lambda_4, \; z_2 = \lambda_3 + \lambda_4, \nonumber\\
& x_1 = \tilde x^2_1 + \tilde x^4_1, \; X_{11} = \tilde X^2_{11} + \tilde X^4_{11}, \;
x_2 = \tilde x^3_2 + \tilde x^4_2, \; X_{22} = \tilde X^3_{22} + \tilde X^4_{22}, \; X_{12} = \tilde X^4_{12}\nonumber\\
&\begin{cases}
\lambda_2 \tilde X^2_{11} \geq (\tilde x^2_1)^2, \; \tilde x^2_1 \geq 0, \qquad & {\rm if}\; \lambda_2 \neq 0\nonumber\\
\tilde x^2_1 = \tilde X^2_{11} = 0,  \qquad \qquad \qquad & {\rm otherwise}
\end{cases}\\
&\begin{cases}
\lambda_3 \tilde X^3_{22} \geq (\tilde x^3_2)^2, \; \tilde x^3_2 \geq 0, \qquad & {\rm if}\; \lambda_3 \neq 0\nonumber\\
\tilde x^3_2 = \tilde X^3_{22} = 0,  \qquad \qquad \qquad & {\rm otherwise}
\end{cases}\\
& \lambda_4 \tilde X^4_{11} \geq (\tilde x^4_1)^2, \; \lambda_4 \tilde X^4_{22} \geq (\tilde x^4_2)^2, \; (\lambda_4\tilde X^4_{11}-(\tilde x^4_1)^2)(\lambda_4 \tilde X^4_{22}-(\tilde x^4_2)^2) \geq (\lambda_4 \tilde X^4_{12}-\tilde x^4_1 \tilde x^4_2)^2,\nonumber\\
&\tilde x^4_1 \geq 0, \; \tilde x^4_2 \geq 0, \; \tilde X^4_{12} \geq 0.\nonumber\\
\Big\}.\label{sbarp}
\end{align}

\paragraph{Projecting out some of the auxiliary variables.}
In the remainder of the proof, our objective is to project out variables $(\tilde x, \tilde X, \lambda)$ from the description of $\Sigma'$, hence obtaining
an explicit description of
$\overline{\conv}(\S_2)$ in the original space. We perform the projection in a number of steps:

\begin{itemize}[leftmargin=*]
\item [(i)] Project out $\lambda_1, \lambda_2, \lambda_3, \tilde X^4_{12}$:
\begin{align*}
\overline{\conv}(\S_2) = &\cl\Big\{(x,X,z): \; \exists (x,X,z,\tilde x, \tilde X,\lambda_4): \; X_{12} > 0,\\
& z_1 + z_2-1 \leq \lambda_4 \leq \min\{z_1, z_2\}, \; \lambda_4 > 0,\\
&x_1 = \tilde x^2_1 + \tilde x^4_1, \; X_{11} = \tilde X^2_{11} + \tilde X^4_{11}, \;
x_2 = \tilde x^3_2 + \tilde x^4_2, \; X_{22} = \tilde X^3_{22} + \tilde X^4_{22}, \\
&\begin{cases}
(z_1-\lambda_4) \tilde X^2_{11} \geq (\tilde x^2_1)^2, \; \tilde x^2_1 \geq 0, \qquad & {\rm if}\; \lambda_4 \neq z_1\\
\tilde x^2_1 = \tilde X^2_{11} = 0,  \quad \qquad\qquad \qquad \qquad & {\rm otherwise}
\end{cases}\\
&\begin{cases}
(z_2-\lambda_4) \tilde X^3_{22} \geq (\tilde x^3_2)^2, \; \tilde x^3_2 \geq 0, \qquad &{\rm if}\; \lambda_4 \neq z_2\\
\tilde x^3_2 = \tilde X^3_{22} = 0,  \quad \qquad\qquad \qquad \qquad &{\rm otherwise}
\end{cases}\\
& \lambda_4 \tilde X^4_{11} \geq (\tilde x^4_1)^2, \; \lambda_4 \tilde X^4_{22} \geq (\tilde x^4_2)^2, \; (\lambda_4\tilde X^4_{11}-(\tilde x^4_1)^2)(\lambda_4 \tilde X^4_{22}-(\tilde x^4_2)^2) \geq (\lambda_4 X_{12}-\tilde x^4_1 \tilde x^4_2)^2,\\
&\tilde x^4_1 \geq 0, \; \tilde x^4_2 \geq 0\\
\Big\}.
\end{align*}

\item [(ii)] Project out $\tilde x^2_1, \tilde X^2_{11}, \tilde x^3_2, \tilde X^3_{22}$:
\begin{align*}
\overline{\conv}(\S_2) = & \cl\Big\{(x,X,z): \; \exists (x,X,z,\tilde x, \tilde X,\lambda_4): \; X_{12} > 0,\\
& z_1 + z_2-1 \leq \lambda_4 \leq \min\{z_1, z_2\}, \; \lambda_4 > 0,\\
&\begin{cases}
(z_1-\lambda_4) (X_{11}-\tilde X^4_{11}) \geq (x_1-\tilde x^4_1)^2, \; \tilde x^4_1 \leq x_1, \qquad &{\rm if}\; \lambda_4 \neq z_1\\
\tilde x^4_1 = x_1, \; \tilde X^4_{11} = X_{11},  \; \qquad \qquad \qquad \qquad \qquad \qquad &{\rm otherwise}
\end{cases}\\
&\begin{cases}
(z_2-\lambda_4) (X_{22}-\tilde X^4_{22}) \geq (x_2 - \tilde x^4_2)^2, \; \tilde x^4_2 \leq x_2, \qquad &{\rm if}\; \lambda_4 \neq z_2\\
\tilde x^4_2 =x_2, \; \tilde X^4_{22} = X_{22},  \; \qquad \qquad \qquad \qquad \qquad \qquad &{\rm otherwise}
\end{cases}\\
& \lambda_4 \tilde X^4_{11} \geq (\tilde x^4_1)^2, \; \lambda_4 \tilde X^4_{22} \geq (\tilde x^4_2)^2, \; (\lambda_4\tilde X^4_{11}-(\tilde x^4_1)^2)(\lambda_4 \tilde X^4_{22}-(\tilde x^4_2)^2) \geq (\lambda_4 X_{12}-\tilde x^4_1 \tilde x^4_2)^2,\\
&\tilde x^4_1 \geq 0, \; \tilde x^4_2 \geq 0\\
\Big\}.
\end{align*}

\item [(iii)] Project out $\tilde X^4_{11}, \tilde X^4_{22}$:
note that by $\lambda_4 > 0$ and $\lambda_4 \leq \min\{z_1, z_2\}$, we have $z_1, z_2 > 0$.
\begin{align*}
\overline{\conv}(\S_2) = & \cl\Big\{(x,X,z): \; \exists (x,X,z,\tilde x^4_1, \tilde x^4_2,\lambda_4): \; X_{12} > 0\\
& z_1 + z_2-1 \leq \lambda_4 \leq \min\{z_1, z_2\}, \; \lambda_4 > 0,\\
&\begin{cases}
& X_{11}- \frac{(\tilde x^4_1)^2}{\lambda_4}-\frac{(x_1-\tilde x^4_1)^2}{z_1-\lambda_4} \geq 0, \;
X_{22}- \frac{(\tilde x^4_2)^2}{\lambda_4}-\frac{(x_2 - \tilde x^4_2)^2}{z_2-\lambda_4} \geq 0, \\
&\Big(X_{11}-\frac{(\tilde x^4_1)^2}{\lambda_4}-\frac{(x_1-\tilde x^4_1)^2}{z_1-\lambda_4}\Big)
\Big(X_{22}-\frac{(\tilde x^4_2)^2}{\lambda_4 }-\frac{(x_2 - \tilde x^4_2)^2}{(z_2-\lambda_4)}\Big) \geq \Big(X_{12}-\frac{\tilde x^4_1 \tilde x^4_2}{\lambda_4 }\Big)^2,\\
&0 \leq \tilde x^4_1 \leq x_1, \; 0 \leq \tilde x^4_2 \leq x_2,  \qquad\qquad \qquad \qquad \qquad {\rm if}\; \lambda_4 \neq z_1, z_2\\
& X_{11} - \frac{x_1^2}{z_1} \geq 0, \; X_{22}-\frac{(\tilde x^4_2)^2}{z_1}-\frac{(x_2 - \tilde x^4_2)^2}{z_2-z_1} \geq 0, \\
& \Big(X_{11}-\frac{x_1^2}{z_1}\Big)\Big(X_{22}-\frac{(\tilde x^4_2)^2}{z_1}-\frac{(x_2 - \tilde x^4_2)^2}{z_2-z_1}\Big) \geq \Big(X_{12}-\frac{x_1 \tilde x^4_2}{z_1}\Big)^2,\\
&0 \leq \tilde x^4_2 \leq x_2,  \qquad\qquad \qquad \qquad \qquad \qquad \qquad \qquad {\rm if}\; \lambda_4 = z_1, \; z_1 < z_2\\
& X_{11}-\frac{(\tilde x^4_1)^2}{z_2}-\frac{(x_1-\tilde x^4_1)^2}{z_1-\lambda_4} \geq 0, \; X_{22} -\frac{x_2^2}{z_2} \geq 0, \\
&\Big(X_{11}-\frac{(\tilde x^4_1)^2}{z_2}-\frac{(x_1-\tilde x^4_1)^2}{z_1-z_2}\Big)\Big(X_{22}-\frac{x_2^2}{z_2}\Big)
\geq \Big(X_{12}-\frac{\tilde x^4_1  x_2}{z_2}\Big)^2,\\
&0 \leq \tilde x^4_1 \leq x_1, \qquad\qquad \qquad \qquad \qquad \qquad \qquad \qquad {\rm if}\; \lambda_4 = z_2, \; z_2 < z_1\\
&  X_{11}- \frac{x_1^2}{\lambda_4} \geq 0, \; X_{22}-\frac{x_2^2}{\lambda_4}\geq 0, \\
& \Big(X_{11}-\frac{x_1^2}{\lambda_4}\Big)\Big(X_{22}-\frac{x_2^2}{\lambda_4}\Big) \geq \Big(X_{12}-\frac{x_1 x_2}{\lambda_4}\Big)^2,
 \qquad \qquad {\rm if}\; \lambda_4 = z_1 = z_2
\end{cases}\\
\Big\}.
\end{align*}
\end{itemize}
Henceforth, for notational simplicity, whenever we write a function of the form $f(u,v)=\frac{u^2}{v}$, we refer to its closure $\hat f$ as defined by~\eqref{closure}. We use a similar convention when composing $f(u,v)$ by an affine mapping. We then have:
\begin{equation}
\overline{\conv}(\S_2) =  \cl\Big\{(x,X,z): \; \exists (x,X,z,\tilde x^4_1, \tilde x^4_2,\lambda_4) \in \tilde \Sigma\Big\},\label{workSet0}
\end{equation}
where
\begin{align}\label{workSet}
\tilde \Sigma := & \Big\{(x,X,z,\tilde x^4_1, \tilde x^4_2,\lambda_4): X_{12} > 0, \; z_1 + z_2-1 \leq \lambda_4 \leq \min\{z_1, z_2\}, \; \lambda_4 > 0,\nonumber\\
& X_{11}- \frac{(\tilde x^4_1)^2}{\lambda_4}-\frac{(x_1-\tilde x^4_1)^2}{z_1-\lambda_4} \geq 0, \;
X_{22}- \frac{(\tilde x^4_2)^2}{\lambda_4}-\frac{(x_2 - \tilde x^4_2)^2}{z_2-\lambda_4} \geq 0, \nonumber\\
&\Big(X_{11}-\frac{(\tilde x^4_1)^2}{\lambda_4}-\frac{(x_1-\tilde x^4_1)^2}{z_1-\lambda_4}\Big)
\Big(X_{22}-\frac{(\tilde x^4_2)^2}{\lambda_4 }-\frac{(x_2 - \tilde x^4_2)^2}{(z_2-\lambda_4)}\Big) \geq \Big(X_{12}-\frac{\tilde x^4_1 \tilde x^4_2}{\lambda_4 }\Big)^2,\nonumber\\
&0 \leq \tilde x^4_1 \leq x_1, \; 0 \leq \tilde x^4_2 \leq x_2\Big\}.
\end{align}
The set $\tilde \Sigma$ is convex; to see this, note that $\tilde \Sigma$ is the projection of $\Sigma'$ onto the space $(x,X,z,\tilde x^4_1, \tilde x^4_2,\lambda_4)$.
The set $\Sigma'$ is convex as it is obtained by removing some of the faces of the convex set $\Sigma$. Since convexity is preserved under projection, it follows that
$\tilde \Sigma$ is a convex set.

\subsection{A piece-by-piece characterization of the convex hull: the simple piece}

Henceforth, define
$$\tilde \S := \Big\{(x,X,z): (x,X,z,\tilde x^4_1, \tilde x^4_2,\lambda_4) \in \tilde \Sigma\Big\} \cup \F,$$
where $\tilde \Sigma$ and $\F$ are defined by~\eqref{workSet} and~\eqref{fface}, respectively.
The convexity of $\tilde \S$ follows from the convexity of
$\tilde \Sigma$ and the fact that $\F$ defines a face of the projection of $\tilde \Sigma$ onto $(x,X,z)$.
By Lemma~\ref{drs}, we have $\rm ri(\conv(\S_2)) \subseteq \tilde \S \subseteq \overline{\conv}(\S_2)$, implying
$$\overline{\conv}(\S_2) = \cl(\tilde \S).$$
In the remainder of the proof, we define the sets $\Ri_i$, $i \in \{1,\ldots,8\}$, as in the statement of Theorem~\ref{theTh}, satisfying $\Ri_i \cap \Ri_j = \emptyset$ for all $i \neq j$,  and
$\bigcup_{i=1}^8 \Ri_i \supset \overline{\conv}(\S_2)$.
Subsequently, for each $i \in \{1,\ldots,8\}$, we characterize the set $\cl(\tilde \S \cap \Ri_i)$ in the original space;
finally, using Lemma~\ref{intcl}, we obtain a characterization of $\overline{\conv}(\S_2)$ given by:
$$
\overline{\conv}(\S_2) = \bigcup_{i=1}^8{\cl(\tilde \S \cap \Ri_i)}.
$$

We start by characterizing a part of the convex hull with a simple algebraic description.
Consider the set
\begin{equation}\label{simple}
\bar\C := \Big\{(x, X, z): X_{11} \geq \frac{x_1^2}{z_1}, \; X_{22} \geq \frac{x_2^2}{z_2},\; X_{12} \geq 0, \; x_1, x_2 \geq 0, \; z_1, z_2 \in [0,1]\Big\}.
\end{equation}
The set
$\bar\C$ is closed and convex; from Proposition~\ref{n=1} it follows that $\bar\C \supseteq \overline{\conv}(\S_2)$. Now, define:
%
\begin{align}\label{rg1}
\Ri'_1 := \Big\{(x, X, z): \; & z_1 \leq z_2, \; x_1 x_2 (z_1 + z_2-1) \leq X_{12} z_1 z_2, \;  X_{12} z_2 \leq x_1 x_2,  \; X_{12} >0, \; x_1, x_2 >0,\nonumber\\
  z_1>0  \Big\}.
\end{align}
We claim that
\begin{equation}\label{cle}
\cl(\tilde \S \cap \Ri'_1)=\cl(\bar\C \cap \Ri'_1).
\end{equation}
Indeed, since $\bar\C \cap \Ri'_1 \supseteq \overline{\conv} (\S_2)\cap \Ri'_1=\cl(\tilde \S) \cap \Ri'_1$, it follows that
$\cl(\bar\C \cap \Ri'_1) \supseteq \cl(\cl(\tilde \S) \cap \Ri'_1) \supseteq \cl(\tilde \S \cap \Ri'_1)$.
Hence, to prove~\eqref{cle}, we need to show that $\cl(\bar\C \cap \Ri'_1) \subseteq \cl(\tilde \S \cap \Ri'_1)$. That is, it is suffices to show that
$\bar\C \cap \Ri'_1 \subseteq \tilde \S \cap \Ri'_1$.
To this aim, observe that every point $(x,X,z,\tilde x^4_1, \tilde x^4_2,\lambda_4)$ satisfying
\begin{equation}\label{thp}
(x,X,z) \in \Ri'_1, \quad  \tilde x^4_1 = \lambda_4 \frac{x_1}{z_1}, \quad \tilde x^4_2 = \lambda_4 \frac{x_2}{z_2}, \quad \lambda_4 = X_{12} \frac{z_1 z_2}{x_1 x_2},
\end{equation}
belongs to $\tilde \Sigma$. Using~\eqref{thp} to substitute $(\tilde x^4_1, \tilde x^4_2, \lambda_4)$ in inequalities~\eqref{workSet} yields:
\begin{align*}
&\Big\{(x,X,z): \; X_{11}-\frac{x_1^2}{z_1} \geq 0, \;
X_{22}- \frac{x_2^2}{z_2}\geq 0, \;X_{12} > 0, \; x_1, x_2 > 0, \; z_1, z_2 \in (0,1] \Big\}
\cap\Ri'_1 \nonumber\\
& = \bar\C \cap \Ri'_1 \subseteq \tilde \S \cap \Ri'_1.
\end{align*}
Hence~\eqref{cle} holds. Finally, we characterize the closure of $\bar\C \cap \Ri'_1$. We claim that
\begin{equation*}
\cl(\bar\C \cap \Ri'_1)= \chi:=\bar\C \cap \Big\{(x, X, z): \;  z_1\leq z_2, \; x_1 x_2 (z_1 + z_2-1) \leq X_{12} z_1 z_2, \; X_{12} z_2 \leq x_1 x_2  \Big\}.
\end{equation*}
%
%
For every $(x,X,z)\in \chi$, the sequence $(x^n,X^n,z^n)$ defined as
\begin{align*}
&x^n_1=\max\Big\{x_1,\frac{1}{n^2}\Big\}, \; x^n_2=\max\Big\{x_2,\frac{1}{n^2}\Big\},\; X^n_{12}=\max\Big\{X_{12},\frac{1}{n^2}\Big\},\; X^n_{11}=\max\Big\{X_{11}, \frac{1}{n}\Big\}, \\
&X^n_{22}=\max\Big\{X_{22}, \frac{1}{n}\Big\}, \; z^n_1=\max\Big\{z_1,\frac{1}{n^2}\Big\}, \; z^n_2=\max\Big\{z_2,\frac{1}{n^2}\Big\},
\end{align*}
converges to $(x,X,z)$ as $n \to \infty$ and clearly $\{(x^n,X^n,z^n)\}\subset \bar\C \cap \Ri'_1$. Hence $\chi\subset \cl(\bar\C \cap \Ri'_1)$.
The reverse inclusion follows from the fact that $\chi$ is a closed set and $\chi\supset \bar\C \cap \Ri'_1$.


Using a similar line of arguments, we can show that
\begin{equation}\label{cle2}
\cl(\tilde \S \cap \Ri''_1)=\cl(\bar\C \cap \Ri''_1),
\end{equation}
where
\begin{align*}
\Ri''_1 := \Big\{(x, X, z): \; & z_2 \leq z_1, \; x_1 x_2 (z_1 + z_2-1) \leq X_{12} z_1 z_2, \;  X_{12} z_1 \leq x_1 x_2, \; X_{12} >0, \; x_1,x_2 >0,\nonumber\\
&  z_2>0  \Big\}.
\end{align*}

Using the fact that the union of closure of sets equals the closure of union of those sets, \emph{the proof of Part~(I) of Theorem~\ref{theTh} for $i=1$ follows
from~\eqref{face3},~\eqref{cle}, and~\eqref{cle2}}.

\vspace{0.2in}

%
%

Therefore, to characterize $\overline{\conv}(\S_2)$, it suffices to characterize $\tilde \S$
over the following regions:
\begin{itemize}
\item Region~$\U_1$:
\begin{equation}\label{r1}
\U_1:=\Big\{(x,X,z): \; X_{12} \max \{z_1, z_2\} > x_1 x_2,\; X_{12} > 0, \; z_1, z_2 >0\Big\}
\end{equation}
\item Region~$\U_2$:
\begin{equation}\label{r2}
\U_2:=\Big\{(x,X,z): \; X_{12} z_1 z_2 < x_1 x_2 (z_1 + z_2 -1), \; X_{12} >0,\; z_1, z_2 >0\Big\}
\end{equation}
\end{itemize}

\subsection{Projection by convex optimization}
\label{proof2}

We next show that the projection of $\tilde \Sigma$ as defined by~\eqref{workSet} onto the space of $(x,X,z)$ can be obtained by solving a \emph{parametric convex optimization problem}.
Define the functions
\begin{equation}\label{yyy}
g_1 := X_{11}- \frac{(\tilde x^4_1)^2}{\lambda_4}-\frac{(x_1-\tilde x^4_1)^2}{z_1-\lambda_4}, \quad g_2 := X_{22}- \frac{(\tilde x^4_2)^2}{\lambda_4}-\frac{(x_2 - \tilde x^4_2)^2}{z_2-\lambda_4} , \quad h   := X_{12}-\frac{\tilde x^4_1 \tilde x^4_2}{\lambda_4}.
\end{equation}
We claim that $\tilde \Sigma$, which by definition is the set of points satisfying:
\begin{align}\label{sys1}
\tag{System I}
& g_1 g_2 \geq h^2, \quad g_1 \geq 0, \quad g_2 \geq 0,\nonumber\\
& z_1 + z_2-1 \leq \lambda_4 \leq \min\{z_1, z_2\}, \quad \lambda_4 > 0,\nonumber\\
&0 \leq \tilde x^4_1 \leq x_1, \quad 0 \leq \tilde x^4_2 \leq x_2, \quad X_{12} > 0, \nonumber
\end{align}
coincides with the set of points satisfying:
\begin{align}
\label{sys2}
\tag{System II}
& g_1 \geq {\rm cl}\Big(\frac{h^2}{g_2}\Big), \quad g_2 \geq 0,\nonumber\\
& z_1 + z_2-1 \leq \lambda_4 \leq \min\{z_1, z_2\}, \quad \lambda_4 > 0,\nonumber\\
&0 \leq \tilde x^4_1 \leq x_1, \quad 0 \leq \tilde x^4_2 \leq x_2, \quad X_{12} > 0, \nonumber
\end{align}
where as in \eqref{closure} we define
\begin{align} \label{defcl}
{\rm cl}\Big(\frac{h^2}{g_2}\Big) :=
\begin{cases}
\frac{h^2}{g_2}, \qquad &{\rm if}\; g_2 > 0\\
0,           \; \;  \qquad     &{\rm if} \; h=g_2=0 \\
+\infty,        \quad      &{\rm if} \;\; h \neq 0, \; g_2=0.
\end{cases}
\end{align}
To see the equivalence of~\ref{sys1} and~\ref{sys2}, notice that if $g_2 > 0$, then the two systems clearly coincide. Hence, let $g_2 = 0$.
As all inequalities in second and third lines of both systems are identical, in the following, we focus on the remaining inequalities in the first lines.
Two cases arise:
\begin{itemize}
\item [(i)] $h = 0$: in this case~\ref{sys1} simplifies to $g_1 \geq 0$ and $g_2 = 0$, and by~\eqref{defcl},~\ref{sys2} simplifies to $g_1 \geq 0$ and $g_2 = 0$ as well.
\item [(ii)] $h \neq 0$:  in this case, the first inequality in~\ref{sys1} simplifies to $0 \geq h^2$, which together with $h \neq 0$ implies that the system is infeasible. By~\eqref{defcl}, the first inequality in~\ref{sys2} simplifies to $g_1 \geq +\infty$. It can be checked that $g_1$ is a concave function and its maximum value equals $X_{11}- \frac{(x_1)^2}{z_1}$. Hence in this case,~\ref{sys2} is infeasible as well.
\end{itemize}
Henceforth, we work with~\ref{sys2}; as before, for notational simplicity, whenever we write $\frac{h^2}{g_2}$, we mean ${\rm cl}(\frac{h^2}{g_2})$ as defined by~\eqref{defcl}.
It then follows that to characterize $\overline{\conv}(\S_2)$, it suffices to project out $(\tilde x^4_1, \tilde x^4_2,\lambda_4)$ from the set of points $(x,X,z,\tilde x^4_1, \tilde x^4_2,\lambda_4)$ satisfying:
\begin{align}
&X_{11} \geq \frac{(\tilde x^4_1)^2}{\lambda_4}+\frac{(x_1-\tilde x^4_1)^2}{z_1-\lambda_4}
+\frac{\Big(X_{12}-\frac{\tilde x^4_1 \tilde x^4_2}{\lambda_4 }\Big)^2}{X_{22}-\frac{(\tilde x^4_2)^2}{\lambda_4 }-\frac{(x_2 - \tilde x^4_2)^2}{(z_2-\lambda_4)}},\nonumber\\
& X_{22}- \frac{(\tilde x^4_2)^2}{\lambda_4}-\frac{(x_2 - \tilde x^4_2)^2}{z_2-\lambda_4} \geq 0, \label{workSet2}\\
& z_1 + z_2-1 \leq \lambda_4 \leq \min\{z_1, z_2\}, \quad \lambda_4 > 0\nonumber\\
&0 \leq \tilde x^4_1 \leq x_1, \quad 0 \leq \tilde x^4_2 \leq x_2,\nonumber
\end{align}
where we further assume that $(x, X,z)\in (\U_1\cup \U_2) \cap \bar \C$ as defined by~\eqref{simple},~\eqref{r1}, and~\eqref{r2}.
To perform this projection, we would like to apply Lemma~\ref{projeq} with
$$
\G:=(\U_1\cup \U_2) \cap \bar \C,
$$
and
\begin{equation*}
\begin{split}
\D:=\Big\{(x,X,z,\tilde x^4_1,& \tilde x^4_2,\lambda_4)\, : \,X_{22}- \frac{(\tilde x^4_2)^2}{\lambda_4}-\frac{(x_2 - \tilde x^4_2)^2}{z_2-\lambda_4} \geq 0, \\
&z_1 + z_2-1 \leq \lambda_4 \leq \min\{z_1, z_2\}, \,\lambda_4 > 0, \, 0 \leq \tilde x^4_1 \leq x_1, \, 0 \leq \tilde x^4_2 \leq x_2\Big\},
\end{split}
\end{equation*}
to the following optimization problem:
\begin{align}
\label{workSet3}
\tag{P$_{x,X,z}$}
\min_{\tilde x^4_1,\tilde x^4_2, \lambda_4} \quad & \frac{(\tilde x^4_1)^2}{\lambda_4}+\frac{(x_1-\tilde x^4_1)^2}{z_1-\lambda_4}
+\frac{\Big(X_{12}-\frac{\tilde x^4_1 \tilde x^4_2}{\lambda_4 }\Big)^2}{X_{22}-\frac{(\tilde x^4_2)^2}{\lambda_4 }-\frac{(x_2 - \tilde x^4_2)^2}{z_2-\lambda_4}},\\
\st \quad & X_{22}- \frac{(\tilde x^4_2)^2}{\lambda_4}-\frac{(x_2 - \tilde x^4_2)^2}{z_2-\lambda_4} \geq 0, \label{c1}\\
& z_1 + z_2-1 \leq \lambda_4 \leq \min\{z_1, z_2\}, \quad \lambda_4 > 0\label{c2}\\
&0 \leq \tilde x^4_1 \leq x_1, \quad 0 \leq \tilde x^4_2 \leq x_2. \label{c3}
\end{align}
In the following, we characterize a minimum of Problem \eqref{workSet3} for every $(x,X,z)\in (\U_1 \cup \U_2) \cap \bar \C$. This in turn, by Lemma \ref{projeq}, enables us to characterize $\overline{\conv}(\S_2)$. 

Problem~\eqref{workSet3} is a convex optimization problem. To see this, first recall that $\tilde \Sigma$, \ie the set of points defined by System~\eqref{workSet2} is a convex set. Let us denote by $\Sigma_{x,X,z}$ the convex set defined as the restriction of $\tilde \Sigma$ to a fixed $(x_1, x_2,X_{12}, X_{22},z_1, z_2)$; then $\Sigma_{x,X,z}$ can be written as
$$
\Sigma_{x,X,z} =\Big\{(\tilde x^4_1, \tilde x^4_2, \lambda_4, X_{11}): X_{11} \geq f_{x,X,z}(\tilde x^4_1, \tilde x^4_2, \lambda_4), (\tilde x^4_1, \tilde x^4_2, \lambda_4) \in \Q_{x,X,z}\Big\},$$
where we define
\begin{equation}\label{xxx}
f_{x,X,z} := \frac{(\tilde x^4_1)^2}{\lambda_4}+\frac{(x_1-\tilde x^4_1)^2}{z_1-\lambda_4}
+\frac{\Big(X_{12}-\frac{\tilde x^4_1 \tilde x^4_2}{\lambda_4 }\Big)^2}{X_{22}-\frac{(\tilde x^4_2)^2}{\lambda_4 }-\frac{(x_2 - \tilde x^4_2)^2}{z_2-\lambda_4}}
\end{equation}
and
\begin{align*}
\Q_{x,X,z} := \Big\{&(\tilde x^4_1, \tilde x^4_2, \lambda_4): X_{22}- \frac{(\tilde x^4_2)^2}{\lambda_4}-\frac{(x_2 - \tilde x^4_2)^2}{z_2-\lambda_4} \geq 0, \\
& z_1 + z_2-1 \leq \lambda_4 \leq \min\{z_1, z_2\}, \; \lambda_4 > 0, \; 0 \leq \tilde x^4_1 \leq x_1, \; 0 \leq \tilde x^4_2 \leq x_2\Big\}.
\end{align*}
In the rest of the paper, for the sake of exposition, we will drop the dependance of $f_{x,X,z}$ on $(x,X,z)$ and we will simply write $f$.
It is simple to check that $\Q_{x,X,z}$ is a convex set. It then follows that the convex set $\Sigma_{x,X,z}$ can be described as the epigraph of function $f$ over convex set
$\Q_{x,X,z}$. Therefore, $f$ is a convex function over $\Q_{x,X,z}$; $f$ is precisely the objective function of Problem~\eqref{workSet3} while $\Q_{x,X,z}$ is its feasible set. Hence, Problem~\eqref{workSet3} is a convex optimization problem. To characterize optimal solutions of Problem~\eqref{workSet3}, we make use of the following
relaxation of Problem~\eqref{workSet3}:
\begin{align}
\label{workSet4}
\tag{P$'_{x,X,z}$}
\min_{\tilde x^4_1,\tilde x^4_2, \lambda_4} \quad & \frac{(\tilde x^4_1)^2}{\lambda_4}+\frac{(x_1-\tilde x^4_1)^2}{z_1-\lambda_4}
+\frac{\Big(X_{12}-\frac{\tilde x^4_1 \tilde x^4_2}{\lambda_4 }\Big)^2}{X_{22}-\frac{(\tilde x^4_2)^2}{\lambda_4 }-\frac{(x_2 - \tilde x^4_2)^2}{z_2-\lambda_4}},\\
\st \quad & X_{22}- \frac{(\tilde x^4_2)^2}{\lambda_4}-\frac{(x_2 - \tilde x^4_2)^2}{z_2-\lambda_4} \geq 0, \label{d1}\\
& 0 < \lambda_4 \leq \min\{z_1, z_2\}\label{d2}.
\end{align}
Problem~\eqref{workSet4} is a convex optimization problem as well. The proof follows from a similar line of arguments to that for Problem~\eqref{workSet3} by replacing $\tilde \Sigma$ with $\tilde \Sigma' :=\{(x, X, z, \tilde x^4_1, \tilde x^4_2, \lambda_4): (x, X, z, \tilde x, \tilde X, \lambda) \in  \conv(\P'_2 \cup \P'_3 \cup \P'_4)\}$, where
\begin{align*}
& \P'_2 := \{(x,X,z): z_1 = 1, z_2 = 0, \; X_{11}= x^2_1, \; x_2=X_{12}=X_{22}=0,\; x_1 \in \R\}, \\
& \P'_3 := \{(x,X,z): z_1 = 0, z_2 = 1, \; x_1=X_{11}= X_{12} =0, \; X_{22}=x^2_2, \; x_2 \in \R\},  \\
& \P'_4 := \{(x,X,z): z_1 = z_2 = 1, \; X_{11}= x^2_1, \; X_{12} =x_1 x_2, \; X_{22}=x^2_2, \; x_1, x_2 \in \R\}.
\end{align*}
However, we should remark that constraints~\eqref{d1} and~\eqref{d2} are essential for the convexity of Problems~\eqref{workSet3} and~\eqref{workSet4}.


In the remainder of the proof, we solve Problem~\eqref{workSet3} analytically for all $(x, X, z) \in (\U_1 \cup \U_2) \cap \bar \C$.
To this end, let us we first consider Problem~\eqref{workSet4}; for any positive $x_1, X_{12}, z_1$, it can be checked that $f \geq \frac{(x_1)^2}{z_1}$ and this lower bound is attained when:
\begin{equation}\label{nec1}
\tilde x^4_1 = \lambda_4 \frac{x_1}{z_1}, \quad \tilde x^4_2 = \frac{X_{12} z_1}{x_1},
\end{equation}
where in addition $\lambda_4$ must satisfy constraints~\eqref{d1} and~\eqref{d2}. To determine $\lambda_4$, we consider the following auxiliary optimization problem:
\begin{align}\label{auxp}
&\max \quad X_{22}-\Big(\frac{X_{12} z_1}{x_1}\Big)^2 \frac{1}{\lambda_4}-\Big(x_2-\frac{X_{12} z_1}{x_1}\Big)^2\frac{1}{z_2 - \lambda_4}\\
& \st \quad 0 < \lambda_4 \leq \min\{z_1, z_2\}.\nonumber
\end{align}
If the optimal value of Problem~\eqref{auxp} is nonnegative, then its maximizer $\lambda_4$ together with $\tilde x^4_1, \tilde x^4_2$ defined by~\eqref{nec1} are an optimal solution of Problem~\eqref{workSet4}.
The objective function of Problem~\eqref{auxp} is concave over $(0, z_2)$. Setting the derivative of the objective function to zero, it follows that the maximizer of the objective function over $(0, z_2)$ is attained at:
\begin{align}
\bar \lambda_4 =
\begin{cases}
\frac{X_{12} z_1 z_2}{x_1 x_2}, \quad &{\rm if} \; X_{12} z_1 < x_1 x_2\\
\frac{X_{12} z_1 z_2}{2X_{12}z_1-x_1 x_2}, \quad & {\rm if} \; X_{12} z_1 > x_1 x_2.
\end{cases}\label{ls}
\end{align}
In the special case where $X_{12} z_1 = x_1 x_2$, Problem~\eqref{auxp} simplifies to the problem of maximizing $X_{22}-\frac{x^2_2}{\lambda_4}$ over $0 < \lambda_4 \leq \min\{z_1, z_2\}$, whose optimal value is attained at $\lambda^{*}_4 = \min\{z_1, z_2\}$.

Now let us consider Problem~\eqref{workSet3} and examine cases under which an optimal solution of this problem is given by~\eqref{nec1} and~\eqref{ls}.
To proceed further, we should consider the two regions $\U_1$ and $\U_2$ defined by~\eqref{r1} and~\eqref{r2} separately.
\begin{itemize}[leftmargin=*]
\item [(i)] $\U_1$: consider the case $z_2 \geq z_1$. Then we have $X_{12} z_2 > x_1 x_2$. Suppose that $X_{12} z_1 \leq x_1 x_2$.
If $X_{12} z_1 = x_1 x_2$, then $\lambda^{*}_4 = z_1$.
Now let $X_{12} z_1 < x_1 x_2$, implying the objective function of~\eqref{auxp} is maximized at $\bar\lambda_4 = \frac{X_{12} z_1 z_2}{x_1 x_2}$.
However, in this case we have $\bar\lambda_4 > z_1$, implying that a maximizer of Problem~\eqref{auxp} is $\lambda_4^* = z_1$.
Substituting $\lambda_4^*$ in~\eqref{d1}, it follows that if
$$
x_1^2 (z_2-z_1)(X_{22} z_2 - x^2_2) \geq z_1 (X_{12} z_2-x_1 x_2)^2,
$$
then an optimal solution of Problem~\eqref{workSet4} is given by
\begin{equation}\label{opt1}
\tilde x^4_1 = x_1, \quad \tilde x^4_2 = \frac{X_{12} z_1}{x_1}, \quad \lambda_4 = z_1.
\end{equation}
Since by assumption $X_{12} z_1 \leq x_1 x_2$, we conclude that~\eqref{opt1} also satisfies constraints~\eqref{c3} and hence is an optimal solution of Problem~\eqref{workSet3}
for all $(x,X,z) \in \Ri_2$, where
\begin{align}\label{reg1}
\Ri_2:=\Big\{&(x,X,z): \;z_1 \leq z_2, \; X_{12} z_2 > x_1 x_2, \; X_{12} z_1 \leq x_1 x_2, \nonumber\\
                        & x^2_1 (z_2-z_1)(X_{22} z_2 - x^2_2) \geq z_1 (X_{12} z_2-x_1 x_2)^2, X_{12} >0, \; z_1 > 0\Big\}.
\end{align}
Therefore, by Lemma~\ref{projeq}, we get
$$
\tilde \S \cap \Ri_2=\Big\{(x,X,z): X_{11} \geq \frac{x^2_1}{z_1}, \;  x_1 > 0, \; z_2 \leq 1\Big\} \cap \Ri_2.
$$
We claim that
\begin{align}\label{migor}
\cl(\tilde \S \cap \Ri_2)
= \chi:=\Big\{&(x,X,z): X_{11} \geq \frac{x^2_1}{z_1}, \; X_{22} \geq \frac{x^2_2}{z_2}, \; x_1\geq 0, \; z_2 \leq 1\Big\}\cap \cl(\Ri_2).
\end{align}
where
$\cl(\Ri_2)=\{(x,X,z): \; z_1 \leq z_2, \; X_{12} z_2 \geq x_1 x_2, \; X_{12} z_1 \leq x_1 x_2, \; x^2_1 (z_2-z_1)(X_{22} z_2 - x^2_2) \geq z_1 (X_{12} z_2-x_1 x_2)^2, \; X_{12} \geq 0, \; z_1 \geq 0\}$.
To show the validity of~\eqref{migor}, first note that
$\chi$ is a closed superset of $\tilde \S \cap \Ri_2$.  Moreover, for every $(x,X,z)\in \chi$, the sequence $(x^n,X^n,z^n)$:
\begin{align*}
&x^n_1=\max\Big\{x_1,\frac{1}{n}\Big\},\; x^n_2=\max\Big\{x_2,\frac{1}{n^2}\Big\},\; X^n_{12}=\max\Big\{X_{12},\frac{1}{n^2}\Big\},\; X^n_{11}=\max\Big\{X_{11}, \frac{1}{n}\Big\},\\
&X^n_{22}=\max\Big\{X_{22}, \frac{1}{n}\Big\},\; z^n_1=\max\Big\{z_1,\frac{1}{n^6}\Big\},\; z^n_2=\max\Big\{z_2,\frac{2}{n}\Big\},
\end{align*}
converges to $(x,X,z)$ as $n \to \infty$ and clearly $\{(x^n,X^n,z^n)\}\subset \tilde \S \cap \Ri_2$. Hence $\cl(\tilde \S \cap \Ri_2) = \chi$.
\vspace{0.1in}

\emph{This concludes the proof of Part~(I) of Theorem~\ref{theTh} for $i=2$.}
\vspace{0.1in}

As we show in Sections~\ref{bx1}--\ref{bx3}, for any $(x,X,z)\in \U_1 \setminus \Ri_2$, there exists an optimal solution of Problem~\eqref{workSet3} with $\tilde x^4_1 = x_1$ or with $\tilde x^4_2 = x_2$.
\vspace{0.1in}

\item [(ii)] $\U_2$: we have $X_{12} z_1 < x_1 x_2 \frac{z_1+z_2-1}{z_2} \leq x_1 x_2$.  Hence, the objective function of~\eqref{auxp} is maximized at $\bar\lambda_4 = \frac{X_{12} z_1 z_2}{x_1 x_2}$ and in this region we have $0 < \bar\lambda_4 < \min\{z_1, z_2\}$. Hence the maximum of Problem~\eqref{auxp} is attained at $\bar \lambda_4$ and is equal to $X_{22} - \frac{x^2_2}{z_2}$. Since for any $(x,X,z) \in \bar \C$ we have $X_{22} - \frac{x^2_2}{z_2} \geq 0$, it follows that $\bar \lambda_4$ together with $\tilde x^4_1, \tilde x^4_2$ defined by~\eqref{nec1} form an optimal solution of Problem~\eqref{workSet4}. Now let us consider Problem~\eqref{workSet3}; for every $(x,X,z)\in \U_2$ it holds $\tilde x^4_2 = \frac{X_{12} z_1}{x_1} < x_2$;
    however, we have $\bar \lambda_4 < z_1 + z_2 -1$. Hence, by Lemma~\ref{convbnd}, for every $(x,X,z)\in \U_2 \cap \bar \C$ there exists an optimal solution of Problem~\eqref{workSet3} with $\lambda_4 = z_1 + z_2 -1$. We analyze this boundary in Section~\ref{proof4}.

\end{itemize}

%
%
%

\subsection{Region~$\U_1$ boundaries: $\tilde x^4_1 = x_1$:}
\label{bx1}
Since Problem~\eqref{workSet3} is a convex optimization problem, if for some $x_1 > 0$ a feasible point $(\tilde x^4_1, \tilde x^4_2, \lambda_4)$ with
$\tilde x^4_1 = x_1$, satisfies the following conditions, then it is an optimal solution:
\begin{enumerate}[leftmargin=*]
\item $\frac{\partial f}{\partial \tilde x^4_1} \leq 0$: this condition is satisfied if
\begin{equation}\label{b1e1}
x_1 \leq \tilde x^4_2 \frac{h}{g_2},
\end{equation}
\item $\frac{\partial f}{\partial \tilde x^4_2} = 0$: this condition is satisfied if
\begin{equation}\label{b1e2}
\Big(\frac{\tilde x^4_2}{\lambda_4}-\frac{x_2-\tilde x^4_2}{z_2-\lambda_4} \Big) h^2 = \frac{x_1}{\lambda_4}h g_2,
\end{equation}

\item $\frac{\partial f}{\partial \lambda_4} = 0$: this condition is satisfied if
\begin{equation}\label{b1e3}
\Big(\frac{x_1}{\lambda_4}\Big)^2 g_2^2= \frac{2 x_1 \tilde x^4_2}{\lambda^2_4} h g_2 - \Big(\Big(\frac{\tilde x^4_2}{\lambda_4}\Big)^2-\Big(\frac{x_2-\tilde x^4_2}{z_2-\lambda_4}\Big)^2 \Big) h^2,
\end{equation}
\end{enumerate}
where $f$ is defined by~\eqref{xxx} and $g_2$, $h$ are defined by~\eqref{yyy}.
We should remark that conditions~\eqref{b1e2} and~\eqref{b1e3} are defined for $\lambda_4 \in (0, z_2)$.
By~\eqref{b1e1}, since $x_1 >0$ and $g_2 \geq 0$, we must have $g_2 > 0$, $h > 0$, and $\tilde x^4_2 > 0$.
First we show that equation~\eqref{b1e3} is implied by equation~\eqref{b1e2}.
Using~\eqref{b1e2} and $g_2 \neq 0$, equation~\eqref{b1e3} can be written as:
\begin{align*}
\Big(\frac{x_1}{\lambda_4}\Big)^2 g_2& = \frac{2 x_1 \tilde x^4_2}{\lambda^2_4} h - \frac{x_1}{\lambda_4}\Big(\frac{\tilde x^4_2}{\lambda_4}+\frac{x_2-\tilde x^4_2}{z_2-\lambda_4} \Big) h = \frac{x_1}{\lambda_4}  \Big(\frac{\tilde x^4_2}{\lambda_4}-\frac{x_2-\tilde x^4_2}{z_2-\lambda_4}\Big) h = \frac{x_1}{\lambda_4}\frac{x_1}{\lambda_4}g_2.
\end{align*}
Hence, equation~\eqref{b1e3} is implied by equation~\eqref{b1e2}.
By feasibility we must have $g_2 > 0$; using equation~\eqref{b1e2}, inequality $g_2 > 0$ holds if and only if
$\frac{\tilde x^4_2}{\lambda_4}-\frac{x_2-\tilde x^4_2}{z_2-\lambda_4} > 0$ and $h = X_{12} - \frac{x_1\tilde x^4_2}{\lambda_4} > 0$; \ie
\begin{equation}\label{b1e4}
\lambda_4 \frac{x_2}{z_2} < \tilde x^4_2 < \lambda_4 \frac{X_{12}}{x_1}.
\end{equation}
Next, we examine the validity of inequality~\eqref{b1e1}. By equation~\eqref{b1e2} and inequality $h > 0$, inequality~\eqref{b1e1} can be equivalently written as
$\lambda_4(\frac{\tilde x^4_2}{\lambda_4}-\frac{x_2-\tilde x^4_2}{z_2-\lambda_4})  \leq \tilde x^4_2$, which using $\lambda_4 > 0$ simplifies to $\frac{x_2-\tilde x^4_2}{z_2-\lambda_4} \geq 0$, whose validity follows from inequalities~\eqref{c2} and~\eqref{c3}. Finally, we consider equation~\eqref{b1e2}; first note that
for $\lambda_4 \in (0, z_2)$, this equation is equivalent to
$x_1 (X_{22} z_2-x^2_2)+\lambda_4 (x_2 X_{12} - x_1 X_{22})  = \tilde x^4_2 (X_{12} z_2-x_1 x_2) $; moreover, by inequality~\eqref{b1e4} we have
$X_{12} z_2-x_1 x_2 > 0$; it then follows that:
\begin{equation}\label{b1e5}
\tilde x^4_2 = \frac{x_1 (X_{22} z_2-x^2_2)+\lambda_4 (x_2 X_{12} - x_1 X_{22})}{X_{12} z_2-x_1 x_2}.
\end{equation}
Hence if $(x_1, \tilde x^4_2, \lambda_4)$ satisfies~\eqref{c2},~\eqref{c3},~\eqref{b1e4} and~\eqref{b1e5}, then it is an optimal solution of Problem~\eqref{workSet3}. Substituting~\eqref{b1e5} into~\eqref{b1e4}, it follows that $\lambda_4 \frac{x_2}{z_2} < \tilde x^4_2 $ simplifies to $\lambda_4 < z_2$ which holds by~\eqref{c2},
and $\tilde x^4_2 < \lambda_4 \frac{X_{12}}{x_1}$
 is equivalent to
\begin{equation}\label{b1e7}
(X_{12} z_2 - x_1 x_2)^2 > \Big(\frac{z_2}{\lambda_4}-1\Big)x^2_1(X_{22} z_2 - x^2_2).
\end{equation}
Next, we examine constraints~\eqref{c3}; using~\eqref{b1e5} it follows that
$\tilde x^4_2 \leq x_2$ is equivalent to
\begin{equation}\label{b1e6}
X_{12} x_2 \geq X_{22}x_1,
\end{equation}
which together with the fact that $X_{22} z_2 \geq x^2_2$ for all $(x,X,z) \in \bar \C$ implies $\tilde x^4_2 \geq 0$. To determine
the optimal $\lambda_4$, notice that the right-hand side of inequality~\eqref{b1e7} is decreasing in $\lambda_4$. Hence we should set $\lambda_4$ at its upper bound in the interval defined by \eqref{c2}.
Two cases arise:
\begin{itemize}[leftmargin=*]
\item If $z_1 < z_2$, then $\lambda_4 = z_1$; substituting in~\eqref{b1e7}, we conclude that for every $(x,X,z) \in \Ri_3$, where
\begin{align}\label{reg2}
\Ri_3:=\Big\{& (x,X,z): \; z_1 < z_2, \; X_{12} x_2 > X_{22} x_1, \;  x_1 \geq 0,\; X_{12} > 0, \; z_1 >0, \nonumber\\
& z_1(X_{12} z_2 - x_1 x_2)^2 > x^2_1(z_2-z_1)(X_{22} z_2 - x^2_2)\Big\},
\end{align}
an optimal solution of Problem~\eqref{workSet3} is given by
$$
(\tilde x^4_1, \tilde x^4_2, \lambda_4) = \Big(x_1, \; \frac{X_{12} x_2 z_1 + x_1 (X_{22} (z_2 - z_1)-x_2^2)}{X_{12} z_2-x_1 x_2}, \; z_1\Big).
$$
Therefore, by Lemma~\ref{projeq}, we obtain
\begin{align*}
\tilde \S \cap \Ri_3=\Big\{(x,X,z): \; X_{22} \geq \frac{x^2_2}{z_2}, \; \Big(X_{11} - \frac{x^2_1}{z_2}\Big)\Big(X_{22}- \frac{x^2_2}{z_2}\Big) \geq \Big(X_{12}- \frac{x_1 x_2}{z_2}\Big)^2, z_2 \leq 1\Big\} \cap \Ri_3.
\end{align*}
Moreover, employing a similar line of arguments to that used to prove~\eqref{migor}, we deduce that
\begin{align}\label{peice1}
 &\cl(\tilde\S \cap \Ri_3)  =\nonumber\\
 & \Big\{(x,X,z): \; X_{22} \geq \frac{x^2_2}{z_2}, \; \Big(X_{11} - \frac{x^2_1}{z_2}\Big)\Big(X_{22}- \frac{x^2_2}{z_2}\Big) \geq \Big(X_{12}- \frac{x_1 x_2}{z_2}\Big)^2, \; z_2 \leq 1\Big\} \cap \cl(\Ri_3),
\end{align}
where $\cl(\Ri_3) = \{(x,X,z): \; z_1 \leq z_2, \; X_{12} x_2 \geq X_{22} x_1, \;  x_1 \geq 0, \; X_{12} \geq 0, \; z_1 \geq 0, \; z_1(X_{12} z_2 - x_1 x_2)^2 \geq x^2_1(z_2-z_1)(X_{22} z_2 - x^2_2)\}$.
%
\vspace{0.1in}

\emph{This concludes the proof of Part~(II) of Theorem~\ref{theTh} for $i=3$.}

%
%
%
%

\item If $z_2 \leq z_1$: since equations~\eqref{b1e2} and~\eqref{b1e3} are defined over $\lambda_4 \in (0, z_2)$, we cannot set $\lambda_4=z_2$.

We define
\begin{equation}\label{reg4}
\Ri_4:=\Big\{(x,X,z): \; z_2 \leq z_1, \; X_{12} x_2 > X_{22} x_1, \; x_1 \geq 0, \; X_{12} > 0, \; z_2 >0 \Big\}.
\end{equation}
Recall that inequality~\eqref{b1e4} implies
$X_{12} z_2-x_1 x_2 > 0$. Hence, for every $(x,X,z)\in \Ri_4$, there exists $0 <\varepsilon(x,X,z) < z_2$ such that $(x,X,z)$ satisfies:
$$
(X_{12} z_2 - x_1 x_2)^2 > \frac{\varepsilon}{z_2 - \varepsilon}x^2_1(X_{22} z_2 - x^2_2).
$$
It follows that for any $(x,X,z)\in \Ri_4$, an optimal solution of Problem~\eqref{workSet3} is given by
$$
(\tilde x^4_1, \tilde x^4_2, \lambda_4) = \Big(x_1, \; x_2-\varepsilon (\frac{X_{12}x_2 -X_{22}x_1 }{X_{12}z_2 - x_1 x_2}), \; z_2-\varepsilon\Big).
$$
Therefore by Lemma~\ref{projeq}, we obtain:
\begin{align}\label{peice2}
\tilde \S \cap \Ri_4= \Big\{(x,X,z): \; & X_{22} \geq \frac{x^2_2}{z_2}, \;  \Big(X_{11}- \frac{x^2_1}{z_2}\Big)\Big(X_{22} - \frac{x^2_2}{z_2}\Big) \geq \Big(X_{12} - \frac{x_1 x_2}{z_2}\Big)^2,\; z_1 \leq 1\Big\} \cap \Ri_4.
\end{align}
Using a similar line of arguments to those used to prove~\eqref{migor}, we obtain:
\begin{align*}
\cl(\tilde \S \cap \Ri_4)= \Big\{(x,X,z): \; &X_{22} \geq \frac{x^2_2}{z_2}, \;  \Big(X_{11}- \frac{x^2_1}{z_2}\Big)\Big(X_{22} - \frac{x^2_2}{z_2}\Big) \geq \Big(X_{12} - \frac{x_1 x_2}{z_2}\Big)^2, z_1 \leq 1\Big\}\cap \cl (\Ri_4),
\end{align*}
where $\cl (\Ri_4)= \{(x,X,z): \; z_2 \leq z_1, \; X_{12} x_2 \geq X_{22} x_1, \; x_1 \geq 0, \; X_{12} \geq 0, \; z_2 \geq 0\}$.

\vspace{0.1in}

\emph{This concludes the proof of Part~(II) of Theorem~\ref{theTh} for $i=4$.}

\end{itemize}

\subsection{Region~$\U_1$ boundaries: $\tilde x^4_2 = x_2$}
\label{bx2}
Since Problem~\eqref{workSet3} is a convex optimization problem, if for some $x_2 > 0$ a feasible point $(\tilde x^4_1, \tilde x^4_2, \lambda_4)$ with
$\tilde x^4_2 = x_2$, satisfies the following conditions, then it is an optimal solution:
\begin{enumerate}[leftmargin=*]
\item $\frac{\partial f}{\partial \tilde x^4_1} = 0$: this condition is satisfied if
\begin{equation}\label{b1e8}
\Big(\frac{\tilde x^4_1}{\lambda_4}-\frac{x_1-\tilde x^4_1}{z_1-\lambda_4} \Big) g_2 = \frac{x_2}{\lambda_4} h,
\end{equation}
\item $\frac{\partial f}{\partial \tilde x^4_2} \leq 0$: this condition is satisfied if
\begin{equation}\label{b1e9}
 \Big(X_{12}-\frac{\tilde x^4_1 x_2}{\lambda_4}\Big)\Big(X_{12} x_2 -X_{22}\tilde x^4_1\Big) \leq 0,
\end{equation}
\item $\frac{\partial f}{\partial \lambda_4} = 0$: this condition is satisfied if
\begin{equation}\label{b1e10}
\Big(\Big(\frac{\tilde x^4_1}{\lambda_4}\Big)^2-\Big(\frac{x_1-\tilde x^4_1}{z_1-\lambda_4} \Big)^2 \Big) g^2_2 = \frac{2 \tilde x^4_1 x_2}{\lambda^2_4} h g_2 -\Big(\frac{x_2}{\lambda_4}\Big)^2 h^2,
\end{equation}
\end{enumerate}
where $f$ is defined by~\eqref{xxx} and $g_2$, $h$ are defined by~\eqref{yyy}.
We should remark that conditions~\eqref{b1e8} and~\eqref{b1e10} are defined for $\lambda_4 \in (0, z_1)$.
Note that by~\eqref{b1e8}, if $h =0$, then we must have $\tilde x^4_1 = \lambda_4 \frac{x_1}{z_1} = \lambda_4 \frac{X_{12}}{x_2}$, which holds
only if $X_{12} z_1 = x_1 x_2$.
Henceforth, we assume that $h \neq 0$.
First we show that equation~\eqref{b1e10} is implied by equation~\eqref{b1e8}. Using~\eqref{b1e8} and $h \neq 0$, equation~\eqref{b1e10}
can be written as
\begin{align*}
\Big(\Big(\frac{\tilde x^4_1}{\lambda_4}\Big)^2-\Big(\frac{x_1-\tilde x^4_1}{z_1-\lambda_4} \Big)^2 \Big) g^2_2 = &\frac{2 \tilde x^4_1 x_2}{\lambda^2_4} h g_2 -\Big(\frac{x_2}{\lambda_4}\Big)^2 h^2\\
\frac{x_2 h}{\lambda_4} \Big(\frac{\tilde x^4_1}{\lambda_4}+\frac{x_1-\tilde x^4_1}{z_1-\lambda_4} \Big) g_2
=&\frac{x_2 h}{\lambda_4}(\frac{2 \tilde x^4_1}{\lambda_4} g_2 -\frac{x_2}{\lambda_4} h)\\
\frac{x_2}{\lambda_4} h=&(\frac{\tilde x^4_1}{\lambda_4} -\frac{x_1-\tilde x^4_1}{z_1-\lambda_4}) g_2,
 \end{align*}
where the validity of the last equality follows from~\eqref{b1e8}. We now examine the validity of inequality~\eqref{b1e9}.
First assume that $X_{12}\lambda_4-\tilde x^4_1 x_2 \leq 0$ and $X_{12} x_2 -X_{22}\tilde x^4_1 \geq 0$: using the valid inequality $X_{22} z_2 \geq x^2_2$,
it follows that $X_{12} x_2 \geq X_{22}\tilde x^4_1 \geq \frac{x^2_2 \tilde x^4_1}{z_2}$, implying $X_{12} z_2 \geq \tilde x^4_1 x_2$, which
together with $\lambda_4 \leq z_2$ contradicts the inequality $X_{12}\lambda_4-\tilde x^4_1 x_2 \leq 0$. Hence this case is not possible.
Therefore, using $h \neq 0$, inequality~\eqref{b1e9} is satisfied if and only if
\begin{equation}\label{es2}
X_{22}\tilde x^4_1- X_{12} x_2 \geq 0,
\end{equation}
and
\begin{equation}\label{es1}
X_{12}\lambda_4-\tilde x^4_1 x_2 > 0.
\end{equation}
By feasibility we must have $g_2 > 0$; \ie
\begin{equation}\label{b1f11}
X_{22} - \frac{x^2_2}{\lambda_4} > 0.
\end{equation}
Next, we consider equation~\eqref{b1e8}; by~\eqref{b1f11} and $\lambda_4 \leq z_1$, we conclude that $X_{22} z_1 - x^2_2 > 0$. Then,
solving equation~\eqref{b1e8} for $\tilde x^4_1$, we obtain
\begin{equation}\label{x1sol}
\tilde x^4_1 = \frac{x_2 (X_{12}z_1 -x_1 x_2)+\lambda_4(X_{22}x_1-X_{12}x_2)}{X_{22} z_1 - x^2_2}.
\end{equation}
Substituting~\eqref{x1sol} in inequality~\eqref{es2}, it follows that this inequality is
satisfied if, together with~\eqref{b1f11}, we have
\begin{equation}\label{b1e12}
X_{22} x_1 - X_{12} x_2 \geq 0,
\end{equation}
and inequality~\eqref{es1} is
satisfied if together with~\eqref{b1f11}, we have
\begin{equation}\label{b1e13}
X_{12} z_1 -x_1 x_2 > 0.
\end{equation}
It is then simple to check that by~\eqref{x1sol},~\eqref{b1e12} and~\eqref{b1e13}, the constraint $0 \leq \tilde x^4_1 \leq x_1$ is always satisfied.
Finally, to optimally determine $\lambda_4$, notice that the left-hand side of inequality~\eqref{b1f11} is increasing in $\lambda_4$.
Hence we should set $\lambda_4$ at its upper bound in the interval defined by \eqref{c2}. Two cases arise:
\begin{itemize}[leftmargin=*]
\item If $z_2 < z_1$, then $\lambda_4 = z_2$; substituting in inequality~\eqref{b1f11} yields $X_{22} > \frac{x^2_2}{z_2}$.
Hence we conclude that for any  $(x,X,z) \in \Ri'_5$, where
\begin{equation}\label{reg5}
\Ri'_5 := \Big\{(x,X,z): \; z_2 < z_1, \;  X_{12} z_1 > x_1 x_2, \; X_{22} x_1 \geq X_{12} x_2, \; x_2 \geq 0, \; X_{12} > 0,\; z_2 > 0\Big\},
\end{equation}
an optimal solution of Problem~\eqref{workSet3} is attained at
$$
(\tilde x^4_1, \; \tilde x^4_2,\; \lambda_4 )= \Big(\frac{x_1(X_{22}z_2-x^2_2)+X_{12}x_2(z_1-z_2)}{X_{22} z_1 -x^2_2}, \; x_2, \;z_2\Big).
$$
Therefore by Lemma~\ref{projeq}, we get
\begin{align}\label{int1}
\tilde \S \cap \Ri'_5= \Big\{& (x,X,z): \; X_{22} > \frac{x^2_2}{z_2}, \; \Big(X_{11} - \frac{x^2_1}{z_1}\Big)\Big(X_{22} - \frac{x^2_2}{z_1}\Big) \geq \Big(X_{12}  -\frac{x_1 x_2}{z_1}\Big)^2, \;  x_1> 0, \nonumber\\
& z_1 \leq 1\Big\} \cap \Ri'_5.
\end{align}

\item $z_1 \leq z_2$: since conditions~\eqref{b1e8} and~\eqref{b1e10} are defined for $\lambda_4 \in (0, z_1)$, we cannot set $\lambda_4=z_1$.

We define
\begin{equation}\label{reg7}
\Ri''_5:=\Big\{(x,X,z): \; z_1 \leq z_2, \;  X_{12} z_1  > x_1 x_2, \; X_{22} x_1 \geq X_{12} x_2, \; x_2 \geq 0\; X_{12} > 0, \; z_1 >0\Big\}.
\end{equation}
For any $(x,X,z)\in  \Ri''_5$, by $X_{12} z_1  > x_1 x_2$ and $X_{22} x_1 \geq X_{12} x_2$, we deduce that $X_{22} > \frac{x^2_2}{z_1}$. Hence, there exists $0 <\varepsilon(x,X,z) < z_1$ such that $(x,X,z)$ satisfies
$$
 X_{22} >  \frac{x^2_2}{z_1-\varepsilon}.
$$
It follows that an optimal solution of Problem~\eqref{workSet3} is attained at
$$
(\tilde x^4_1, \;\tilde x^4_2, \; \lambda_4)= \Big(x_1 - \varepsilon \Big(\frac{X_{22}x_1-X_{12}x_2}{X_{22} z_1 -x^2_2}\Big), \; x_2, \;  z_1-\varepsilon\Big).
$$
Therefore by Lemma~\ref{projeq}, we obtain:
\begin{align}\label{int2}
 \tilde \S \cap \Ri''_5=\Big\{&(x,X,z): \; \Big(X_{11} - \frac{x^2_1}{z_1}\Big)\Big(X_{22} - \frac{x^2_2}{z_1}\Big) \geq \Big(X_{12}-\frac{x_1 x_2}{z_1}\Big)^2, x_1> 0,
  z_2 \leq 1\Big\}\cap \Ri''_5.
\end{align}
\end{itemize}

From~\eqref{reg5},~\eqref{int1},~\eqref{reg7}, and~\eqref{int2}, it follows that
\begin{align}\label{peice3}
\cl(\tilde \S \cap \Ri_5)= \Big\{& (x,X,z): \; X_{22} \geq \frac{x^2_2}{z_2}, \; \Big(X_{11} - \frac{x^2_1}{z_1}\Big)\Big(X_{22} - \frac{x^2_2}{z_1}\Big) \geq \Big(X_{12}  -\frac{x_1 x_2}{z_1}\Big)^2, \nonumber\\
& x_1 \geq 0, \; X_{12} \geq 0,\; z_1, z_2 \leq 1\Big\} \cap \cl(\Ri_5),
\end{align}
where $\Ri_5 = \Ri'_5 \cup \Ri''_5$ and $\cl(\Ri_5) = \{(x,X,z): X_{12} z_1 \geq x_1 x_2, \; X_{22} x_1 \geq X_{12} x_2, x_2 \geq 0, X_{12} \geq 0, z_1, z_2 \geq 0\}$.
\vspace{0.1in}

\emph{This concludes the proof of Part~(III) of Theorem~\ref{theTh} for $i=1$.}


\subsection{Convex hull characterization over Region~$\U_1$}
\label{bx3}
We now show that the union of the regions defined by the system of inequalities~\eqref{reg1},~\eqref{reg2},~\eqref{reg4},~\eqref{reg5},
and~\eqref{reg7} covers $\U_1$; \ie $\Ri_2 \cup \Ri_3 \cup \Ri_4 \cup \Ri_5\supseteq \U_1 \cap \bar \C$.
Two cases arise:
\begin{itemize}[leftmargin=*]
\item [(i)] $z_1 \leq z_2$: in this case by~definition of $\U_1$, we have $X_{12} z_2 > x_1 x_2$.
Hence it suffices to show that any point $(x,X,z) \in \bar \C$ satisfying $X_{12} z_2 > x_1 x_2$ is also in the set $\Ri_2 \cup \Ri_3 \cup \Ri_5$.
Let us define the sets $\C_1 = \{(x,X,z): X_{12} z_1 \leq x_1 x_2\}$,
$\C_2=\{(x,X,z): X_{12} x_2 > x_1 X_{22}\}$, $\C_3= \{(x,X,z): x^2_1(z_2-z_1)(X_{22}z_2-x^2_2) \geq z_1 (X_{12}z_2-x_1 x_2)^2\}$, which can also be written as
$\C_3= \{(x,X,z): x^2_1(z_2-z_1)X_{22} \geq X^2_{12} z_1 z_2-2X_{12}x_1 x_2z_1  +(x_1 x_2)^2\}$.
Then to prove the statement, it suffices to show that the following sets are empty:
\begin{itemize}[leftmargin=*]
\item [$\bullet$] $\bar \C \cap \C_1 \cap \C^c_2 \cap \C^c_3$: by $(x,X,z) \in \C^c_2$ and $z_1 \leq z_2$, we have $x^2_1  (z_2-z_1) X_{22} \geq x_1 x_2 X_{12}(z_2-z_1)$. Moreover, since
$X_{12} z_2 -x_1 x_2 > 0$, it follows that $X_{12} x_1 x_2 (z_2-z_1) > X^2_{12} z_1 z_2 -2 x_1 x_2 X_{12} z_1 +(x_1 x_2)^2$, implying $x^2_1 (z_2-z_1) X_{22} > X^2_{12} z_1 z_2 -2 X_{12}x_1 x_2 z_1 +(x_1 x_2)^2$, which is in contradiction with $(x,X,z) \in \C^c_3$.


\item [$\bullet$] $\bar \C \cap \C^c_1 \cap \C_2 \cap \C_3$: by $(x,X,z) \in \C_2$ and $z_1 \leq z_2$, we have $x^2_1X_{22} (z_2-z_1) \leq X_{12} x_1 x_2 (z_2-z_1)$. Since $(x,X,z) \in \C^c_1$; \ie $X_{12} z_1 -x_1 x_2 > 0$
and $X_{12} z_2 - x_1 x_2 > 0$, we can multiply the left-hand side of the two inequalities to obtain
 $X_{12} x_1 x_2 (z_2-z_1) < X^2_{12} z_1 z_2 -2 x_1 x_2 X_{12} z_1 + (x_1 x_2)^2$. Combining these two inequalities we get a contradiction with $(x,X,z) \in \C_3$.

\end{itemize}

\item [(ii)] $z_2 \leq z_1$: in this case by definition of $\U_1$, we have $X_{12} z_1 > x_1 x_2$.
Hence it suffices to show that any point $(x,X,z) \in \bar \C$ satisfying $X_{12} z_1 > x_1 x_2$ is also in the set $\Ri_4 \cup \Ri_5$. To this end,
we need to show that for every point $(x,X,z) \in \bar \C$ satisfying $X_{12} z_1 > x_1 x_2$, it either does not satisfy inequality $X_{12} x_2 > x_1 X_{22}$ or it does not satisfy inequality $X_{12} z_2 \leq x_1 x_2$. Indeed, since $X_{22} z_2 \geq x^2_2$ for all $(x,X,z) \in \bar \C$, using $X_{12} x_2 > x_1 X_{22}$, we get
$X_{12} x_2 > x_1 \frac{x^2_2}{z_2}$, which is in contradiction with $X_{12} z_2 \leq x_1 x_2$.

\end{itemize}

\subsection{Region~$\U_2$ boundary: $\lambda_4 = z_1 + z_2-1$}
\label{proof4}

As we detailed in Section~\ref{proof2}, for all $(x,X,z)\in \U_2$, there always exists an optimal solution
of Problem~\eqref{workSet3} with $\lambda_4 = z_1 + z_2 -1$. Hence, to characterize $\overline{\conv}(\S_2)$ over $\U_2$, 
given any $(x,X,z)\in \U_2 \cap \bar \C$, we solve the following convex
optimization problem analytically:
\begin{align}
\label{workSet5}
\tag{$\bar {\rm P}_{x,X,z}$}
\min_{\tilde x^4_1,\tilde x^4_2} \quad & \frac{(\tilde x^4_1)^2}{z_1+z_2-1}+\frac{(x_1-\tilde x^4_1)^2}{1-z_2}
+\frac{\Big(X_{12}-\frac{\tilde x^4_1 \tilde x^4_2}{z_1+z_2-1}\Big)^2}{X_{22}-\frac{(\tilde x^4_2)^2}{z_1+z_2-1}-\frac{(x_2 - \tilde x^4_2)^2}{1-z_1}},\\
\st \quad & X_{22}- \frac{(\tilde x^4_2)^2}{z_1+z_2-1}-\frac{(x_2 - \tilde x^4_2)^2}{1-z_1} \geq 0, \label{f1}\\
&0 \leq \tilde x^4_1 \leq x_1, \quad 0 \leq \tilde x^4_2 \leq x_2. \label{f2}
\end{align}
Notice that Problem~\eqref{workSet5} is convex as it is obtained by fixing $\lambda_4=z_1+z_2-1$ in the convex Problem~\eqref{workSet3}.
By inequalities~\eqref{r2} together with $x_1, x_2 \geq 0$, we deduce that $x_1, x_2 > 0$ and $z_1 + z_2 > 1$.
We start by characterizing all points of zero-gradient of the objective function of Problem~\eqref{workSet5}, which with a slight abuse of notation we will keep denoting by $f$.
Subsequently, we identify conditions under which each point of zero-gradient is feasible, which by convexity of
Problem~\eqref{workSet5} implies its optimality. It can be checked that
\begin{align}
& \frac{\partial f}{\partial \tilde x^4_1} = 0 \quad \Leftrightarrow \quad \frac{\tilde x^4_1}{z_1 + z_2-1}-\frac{x_1-\tilde x^4_1}{1-z_2} = \frac{\tilde x^4_2}{z_1 + z_2-1} \frac{h}{g_2}\label{px1},\\
& \frac{\partial f}{\partial \tilde x^4_2} = 0 \quad \Leftrightarrow \quad \frac{\tilde x^4_1}{z_1 + z_2-1} h g_2 =\Big(\frac{\tilde x^4_2}{z_1+z_2-1}-\frac{x_2-\tilde x^4_2}{1-z_1}\Big) h^2\label{px2},
\end{align}
where as before we define
$$
h = X_{12}-\frac{\tilde x^4_1 \tilde x^4_2}{z_1 + z_2-1}, \qquad g_2 = X_{22}- \frac{(\tilde x^4_2)^2}{z_1+z_2-1}-\frac{(x_2 - \tilde x^4_2)^2}{1-z_1}.
$$
Notice that conditions~\eqref{px1} and~\eqref{px2} are defined for $z_1, z_2 \in (0,1)$. Two cases arise:
\begin{itemize}[leftmargin=*]
\item $h = 0$: in this case, by~\eqref{px1} and~\eqref{px2}, the point of zero-gradient is given by:
\begin{equation}\label{opth0}
\tilde x^4_1 = (z_1+z_2-1) \frac{x_1}{z_1}, \qquad \tilde x^4_2 = X_{12} \frac{z_1}{x_1}.
\end{equation}
It can be checked that for any $(x,X,z) \in \U_2$, the above point satisfies constraints~\eqref{f2}. Hence, if inequality~\eqref{f1} is satisfied; \ie
\begin{equation}\label{reg8}
(1-z_1)(z_1+z_2-1) x^2_1 (X_{22} z_2 - x^2_2) \geq \Big(X_{12} z_1 z_2-x_1 x_2(z_1+z_2-1)\Big)^2,
\end{equation}
then $(\tilde x^4_1, \tilde x^4_2)$ given by~\eqref{opth0} is an optimal solution of Problem~\eqref{workSet5}. Let us denote by
$\Ri_6$, the region defined by inequalities~\eqref{r2} and~\eqref{reg8}. Then by Lemma~\ref{projeq}, we get:
$$
\tilde \S \cap \Ri_6 = \Big\{(x,X,z): \; X_{11} \geq \frac{x^2_1}{z_1}, \; x_1, x_2 > 0,\; z_1, z_2 < 1\Big\} \cap \Ri_6.
$$
which in turn implies that
\begin{align}\label{peice5}
\cl(\tilde \S\cap \Ri_6)= &\Big\{(x,X,z): \; X_{11} \geq \frac{x^2_1}{z_1}, \; X_{22} \geq \frac{x^2_2}{z_2}, \; x_1, x_2 \geq 0,\;z_1, z_2 \leq 1\Big\}\cap \cl(\Ri_6),
\end{align}
where $\cl(\Ri_6)$ consists of the set of points satisfying inequality~\eqref{reg8} together with inequalities $X_{12} z_1 z_2 \leq x_1 x_2 (z_1 + z_2 -1)$, $X_{12} \geq 0$,
$z_1, z_2 \geq 0$.
\vspace{0.1in}

\emph{This concludes the proof of Part~(I) of Theorem~\ref{theTh} for $i=6$.}


\item $h \neq 0$: in this case at any minimizer we must have $g_2 \neq 0$; then by~\eqref{px1} and~\eqref{px2}, a point of zero-gradient satisfies the following:
\begin{align}
& \frac{\tilde x^4_1}{z_1 + z_2-1}-\frac{x_1-\tilde x^4_1}{1-z_2} = \frac{\tilde x^4_2}{z_1 + z_2-1} \frac{h}{g_2}\label{ppx1},\\
& \frac{\tilde x^4_1}{z_1 + z_2-1} =\Big(\frac{\tilde x^4_2}{z_1+z_2-1}-\frac{x_2-\tilde x^4_2}{1-z_1}\Big) \frac{h}{g_2}\label{ppx2}.
\end{align}
From~\eqref{ppx1} and~\eqref{ppx2}, it follows that
$$
\frac{\tilde x^4_1}{z_1 + z_2-1} \frac{(x_2-\tilde x^4_2)}{1-z_1}+\frac{\tilde x^4_2}{z_1+z_2-1}\frac{(x_1-\tilde x^4_1)}{1-z_2}
=\frac{(x_1-\tilde x^4_1)(x_2-\tilde x^4_2)}{(1-z_1)(1-z_2)},
$$
which (since $\tilde x^4_2=x_2 z_1$ does not satisfy the equation) can be equivalently written as:
\begin{equation}\label{use}
\tilde x^4_1= x_1 \Big(\frac{\tilde x^4_2 z_2 -x_2(z_1+z_2-1)}{\tilde x^4_2-x_2 z_1}\Big).
\end{equation}
Substituting~\eqref{use} in~\eqref{ppx1} and~\eqref{ppx2}, we find that the only point of zero-gradient of the objective function of Problem~\eqref{workSet5}
in this case is given by:
\begin{align}
& \tilde x^4_1 = \frac{X_{12}x_2(1-z_2)+x_1(X_{22}z_2 -x^2_2)}{X_{22}-x^2_2}, \label{opthx1}\\
&\tilde x^4_2=\frac{x_1(x^2_2-X_{22}(1-z_1))-X_{12}x_2z_1}{x_1 x_2 - X_{12}}. \label{opthx2}
\end{align}
By feasibility we must have $g_2 > 0$, which by~\eqref{ppx2},~\eqref{opthx1} and using $X_{22} z_2 \geq x^2_2$ for all $(x,X,z) \in \bar \C$,
is satisfied if and only if:
$$
\Big(\frac{\tilde x^4_2}{z_1+z_2-1}-\frac{x_2-\tilde x^4_2}{1-z_1}\Big) h > 0.
$$
Moreover, by~\eqref{r2} we have
\begin{align*}
&\frac{\tilde x^4_2}{z_1+z_2-1}-\frac{x_2-\tilde x^4_2}{1-z_1} < 0 \quad \Leftrightarrow \quad \tilde x^4_2 < (z_1+z_2-1)\frac{x_2}{z_2}\\
\Leftrightarrow\quad & x_1 z_2(x^2_2-X_{22}(1-z_1))-X_{12} x_2 z_1 z_2 < x_2 (z_1+z_2-1)(x_1 x_2 - X_{12}) \\
\Leftrightarrow\quad & x_1 (X_{22}z_2-x^2_2)+(1-z_2)x_2 X_{12} > 0,
\end{align*}
where the validity of the last inequality follows since for all $(x,X,z) \in \bar \C$ we have $X_{22}z_2-x^2_2 \geq 0$. Hence we conclude
that $g_2 > 0$, if and only if $h < 0$; \ie
\begin{equation}\label{usethis}
\tilde x^4_1 \tilde x^4_2 > (z_1 + z_2 -1) X_{12}.
\end{equation}
Moreover, for every $(x,X,z)\in \U_2$, the above inequality can be equivalently written as:
\begin{align}\label{reg9}
x_1^2& (x_2^2 - X_{22} (1 - z_1)) (X_{22} z_2-x_2^2)
\nonumber\\
& > 2 x_1 x_2 X_{12} z_1 (X_{22} z_2-x_2^2) - X_{12}^2 (X_{22}(z_1+z_2-1) + x_2^2 (1 - 2 z_1 - z_2(1 -z_1))).
\end{align}
Now we examine the bounds constraints~\eqref{f2}.
First notice that since by~\eqref{opthx1} we have $\tilde x^4_1 \geq 0$, and inequality~\eqref{usethis} implies that at any point with $g_2 \geq 0$, we also
have $\tilde x^4_2 \geq 0$.
Moreover, it can be checked that for every $(x,X,z)\in \U_2$, the constraints $\tilde x^4_1 \leq x_1$ and $\tilde x^4_2 \leq x_2$ are satisfied if and only if $x_1 X_{22} \geq X_{12} x_2$. To see the validity of the latter inequality, suppose by contradiction that $X_{12} x_2 > x_1 X_{22}$. Then using the fact that $X_{22} z_2 \geq x^2_2$ for all $(x,X,z) \in \bar \C$, we get $X_{12} x_2 > x_1 X_{22} \geq x_1 \frac{x_2^2}{z_2}$, implying $X_{12} z_2 > x_1 x_2$, which is in contradiction with inequality~\eqref{r2} defining $\U_2$. Therefore, the point defined by~\eqref{opthx1} and~\eqref{opthx2} satisfies the bound constraints~\eqref{f2}.

Let us denote by $\Ri_7$ the region defined by inequalities~\eqref{r2} and~\eqref{reg9}. We then conclude that for all $(x,X,z) \in \Ri_7$
an optimal solution of Problem~\eqref{workSet5} is given by~\eqref{opthx1} and~\eqref{opthx2}. Thus, by Lemma~\ref{projeq} we get
\begin{align*}
\tilde \S \cap \Ri_7 =\Big\{(x,X,z): \; & X_{22} \geq \frac{x^2_2}{z_2}, \;(X_{11}-x^2_1) (X_{22}-x^2_2) \geq (X_{12}-x_1 x_2)^2,\; x_1, x_2 > 0, \nonumber\\
&  z_1, z_2 < 1\Big\} \cap \Ri_7,
\end{align*}
which in turn implies that
\begin{align}\label{peice6}
\cl(\tilde \S \cap \Ri_7) =\Big\{(x,X,z): &\; X_{11}\geq x^2_1, \; X_{22} \geq \frac{x^2_2}{z_2}, \;(X_{11}-x^2_1) (X_{22}-x^2_2) \geq (X_{12}-x_1 x_2)^2,\nonumber\\
&x_1, x_2 \geq 0, \; z_1, z_2 \leq 1\Big\} \cap \cl(\Ri_7),
\end{align}
where $\cl(\Ri_7)$ is obtained from $\Ri_7$ by replacing the strict inequalities by nonstrict inequalities.
\vspace{0.1in}

\emph{This concludes the proof of Part~(IV) of Theorem~\ref{theTh} for $i=7$.}

\end{itemize}


\paragraph{The boundary $g_2 =0$.}
By the above analysis, at any $(x,X,z)\in \U_2 \setminus (\Ri_6 \cup \Ri_7)$, the following
convex optimization problem has no point of zero-gradient:
\begin{align}
\label{workSet7}
\tag{$\bar {\rm P'}_{x,X,z}$}
\min_{\tilde x^4_1,\tilde x^4_2} \quad & \frac{(\tilde x^4_1)^2}{z_1+z_2-1}+\frac{(x_1-\tilde x^4_1)^2}{1-z_2}
+\frac{\Big(X_{12}-\frac{\tilde x^4_1 \tilde x^4_2}{z_1+z_2-1}\Big)^2}{X_{22}-\frac{(\tilde x^4_2)^2}{z_1+z_2-1}-\frac{(x_2 - \tilde x^4_2)^2}{1-z_1}},\\
\st \quad & X_{22}- \frac{(\tilde x^4_2)^2}{z_1+z_2-1}-\frac{(x_2 - \tilde x^4_2)^2}{1-z_1} \geq 0\label{ff1}.
\end{align}
Inequality~\eqref{ff1} can be equivalently written as:
\begin{equation}\label{o1}
\lambda_4 \frac{x_2}{z_2} - \frac{\sqrt{(X_{22}z_2-x^2_2)(1-z_1)\lambda_4}}{z_2} \leq \tilde x^4_2 \leq \lambda_4 \frac{x_2}{z_2} + \frac{\sqrt{(X_{22}z_2-x^2_2)(1-z_1)\lambda_4}}{z_2},
\end{equation}
where as before $\lambda_4 = z_1 + z_2 -1$.
Over Region~$\U_2$, all points of zero-gradient of the objective function of Problem~\eqref{workSet7} defined by~\eqref{opth0}, and~\eqref{opthx1}-\eqref{opthx2} satisfy $\tilde x^2_4 \leq \lambda_4 \frac{x_2}{z_2}$. To see this, first consider~\eqref{opth0}; \ie $\tilde x^4_2 = X_{12} \frac{z_1}{x_1}$. In this case $\tilde x^4_2 \leq \lambda_4 \frac{x_2}{z_2}$
follows directly from~\eqref{r2} defining $\U_2$. Next, consider~\eqref{opthx2}; in this case $\tilde x^4_2 \leq \lambda_4 \frac{x_2}{z_2}$, can be equivalently written as
$$(1-z_1) (X_{12} x_2 (1 - z_2) + x_1 (X_{22} z_2-x_2^2)) \geq 0,$$
 whose validity follows from the fact that $X_{22} z_2-x_2^2 \geq 0$ for all $(x,X,z) \in \bar \C$.

Therefore, in constraint~\eqref{o1} the upper bound on $\tilde x_2^4$ is redundant. Hence in Problem~\eqref{workSet7} we can replace constraint~\eqref{ff1}
by the following inequality:
\begin{equation}\label{o2}
\tilde x^4_2 \geq \lambda_4 \frac{x_2}{z_2} - \frac{\sqrt{(X_{22}z_2-x^2_2)(1-z_1)\lambda_4}}{z_2}.
\end{equation}
Since Problem~\eqref{workSet7} has no point of zero-gradient inside its domain, its minimizer is attained either at some point of non-differentiability or
at the boundary of its domain; \ie $g_2 = 0$; in fact all points satisfying $g_2 = 0$ are non-differentiability points. Since $z_1 + z_2 > 1$, the only remaining point
of differentiability is $z_2 = 1$, however, the case with $z_2 =1$ can be considered as a limiting point of the analysis in the previous section and yields to
characterizations~\eqref{peice5} and~\eqref{peice6} with $z_2 = 1$. Hence, it only remains to consider $g_2 = 0$, which in turn implies $h = 0$.
In this case, the minimizer of Problem~\eqref{workSet7} is given by:
\begin{align}
&\tilde x^4_1 = \frac{X_{12} z_2}{x_2 - \sqrt{\frac{(X_{22}z_2-x^2_2)(1-z_1)}{z_1+z_2-1}}} \label{loptx1}\\
&\tilde x^4_2 =  (z_1+z_2-1) \frac{x_2}{z_2} - \frac{\sqrt{(X_{22}z_2-x^2_2)(1-z_1)(z_1+z_2-1)}}{z_2}\label{loptx2}.
\end{align}
To prove that the above point is also a minimizer of Problem~\eqref{workSet5}, we must show that it satisfies constraints~\eqref{f2}.
It can be checked that $\tilde x^4_1 \geq 0$ and $\tilde x^4_2 \geq 0$, if
\begin{equation}\label{aux}
x^2_2 > X_{22} (1-z_1).
\end{equation}
We now show that~\eqref{aux} is valid since by assumption inequality~\eqref{reg8} is not satisfied; \ie
\begin{align}
&\Big(\frac{1-z_1}{z_1+z_2-1}\Big) x^2_1 (X_{22} z_2 - x^2_2) \leq \Big(x_1 x_2- X_{12} \frac{z_1 z_2}{z_1+z_2-1}\Big)^2, \label{hl}\\
\Leftrightarrow \quad & (X_{22}(1-z_1)-x_2^2)x^2_1 (z_1+z_2-1) \leq X_{12}z_1(X_{12}z_1 z_2-2x_1x_2(z_1+z_2-1)).\nonumber
\end{align}
The proof then follows since by~\eqref{r2} we have $X_{12}z_1 z_2-2x_1x_2(z_1+z_2-1) < 0$, and hence $X_{22}(1-z_1)-x_2^2 < 0$; \ie inequality~\eqref{aux} holds.
Hence we have $\tilde x^4_1 \geq 0$ and $\tilde x^4_2 \geq 0$.

Next, we prove $\tilde x^4_1 \leq x_1$; by~\eqref{r2} and~\eqref{loptx1}, this inequality can be equivalently written as:
\begin{equation}\label{aux2}
\Big(\frac{1-z_1}{z_1+z_2-1}\Big) x^2_1 (X_{22}z_2-x^2_2) \leq (x_1 x_2 - X_{12} z_2)^2.
\end{equation}
For every $(x,X,z)\in \U_2$ we have
$$\Big(x_1 x_2- X_{12} \frac{z_1 z_2}{z_1+z_2-1}\Big)^2 < (x_1 x_2 - X_{12} z_2)^2.$$
The validity of inequality~\eqref{aux2} then follows from~\eqref{hl}.
Finally, by~\eqref{loptx2} we have $\tilde x^4_2 \leq (z_1+z_2-1)\frac{x_2}{z_2} \leq x_2$. We conclude that for every $(x,X,z) \in \Ri_8$, where
\begin{equation}\label{reg10}
\Ri_8:=\U_2 \setminus (\Ri_6 \cup \Ri_7),
\end{equation}
an optimal solution of Problem~\eqref{workSet5}
is given by~\eqref{loptx1} and~\eqref{loptx2}. Therefore, by Lemma~\ref{projeq}, we get:
\begin{align*}
\tilde \S \cap \Ri_8= \Big\{&(x,X,z): \; z_1 (1-z_2)\Big(X_{11}-\frac{x^2_1}{z_1}\Big) x^2_2 \geq (z_1+z_2-1)\Big(X_{12}\frac{z_1 z_2}{W}-x_1 x_2\Big)^2, \; x_1, x_2 > 0, \nonumber\\
&  z_1, z_2 \leq 1\Big\}  \cap \Ri_8,
\end{align*}
where we define
$$
W := (z_1+ z_2-1)- \frac{1}{x_2}\sqrt{(X_{22}z_2-x^2_2)(1-z_1)(z_1+z_2-1)}.
$$
It can be checked that, by~\eqref{aux}, we have $W > 0$. Moreover, using a similar line of arguments as in previous cases, it can be shown that
\begin{align}\label{peice7}
\cl(\tilde \S \cap \Ri_8)=  \Big\{&(x,X,z): \; X_{11} \geq \frac{x^2_1}{z_1}, \; X_{22} \geq \frac{x^2_2}{z_2}, \; x_1, x_2 \geq 0, \; z_1, z_2 \leq 1\nonumber \\
&z_1 (1-z_2)\Big(X_{11}-\frac{x^2_1}{z_1}\Big) x^2_2 \geq (z_1+z_2-1)\Big(X_{12}\frac{z_1 z_2}{W}-x_1 x_2\Big)^2 \Big\} \cap \cl(\Ri_8),
\end{align}
where $\cl(\Ri_8)$ is obtained by replacing strict inequalities in~\eqref{reg10} by non-strict inequalities.
\vspace{0.1in}

\emph{This concludes the proof of Part~(V) of Theorem~\ref{theTh} for $i=8$.}
\vspace{0.1in}

We conclude the proof of Theorem~\ref{theTh} by remarking that it is simple to check that $\Ri_i \cap \Ri_j = \emptyset$ for all $i \neq j \in \{1,\ldots,8\}$
and $\bigcup_{i=1}^8{\Ri_i} \supset \overline {\conv}(\S_2)$. In fact, from the proof it follows that $\bigcup_{i=1}^8{\Ri_i} \supset \bar \C$, where $\bar \C$ is defined by~\eqref{simple}; we use this property in the development of our separation algorithm.

\section{Technical Lemmata}
\label{lemmata}
In this section, we state and prove all technical lemmata that we used to prove Theorem~\ref{theTh}, Proposition~\ref{usefull}, and Proposition~\ref{sepProp}. In addition, we provide a brief motivation for each of them.

\vspace{0.1in}

The first lemma enables us to obtain the closure of the convex hull of the disjunctive set $\S_2 = \cup_{i=1}^k{\P_i}$ in a sequential manner.

\begin{lemma}\label{lemClose}
Let $k \geq 2$ and consider the sets $\C, \P_1, \ldots, \P_k$ in $\R^n$ such that $\C = \bigcup_{i=1}^k{\P_i}$. Then
$$
\overline{\conv}(\C) = \overline{\conv}\Big(\bigcup_{i=1}^k{\overline{\conv}{(\P_i)}}\Big).
$$
\end{lemma}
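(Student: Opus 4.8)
The plan is to establish the identity by proving the two inclusions separately, using only two elementary properties of the closed-convex-hull operator $\overline{\conv}(\cdot)$. First, \emph{monotonicity}: if $A \subseteq B$ then $\overline{\conv}(A) \subseteq \overline{\conv}(B)$. Second, the \emph{minimality} characterization: $\overline{\conv}(A) = \cl(\conv(A))$ is the smallest closed convex set containing $A$, so any closed convex set that contains $A$ must contain $\overline{\conv}(A)$; in particular $\overline{\conv}(\cdot)$ is idempotent, i.e.\ $\overline{\conv}(D) = D$ whenever $D$ is closed and convex. Both properties are standard consequences of the definition.

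For the inclusion ``$\subseteq$'', I would simply note that $\P_i \subseteq \overline{\conv}(\P_i)$ for each $i$, whence $\C = \bigcup_{i=1}^k \P_i \subseteq \bigcup_{i=1}^k \overline{\conv}(\P_i)$, and then apply monotonicity of $\overline{\conv}(\cdot)$ to obtain $\overline{\conv}(\C) \subseteq \overline{\conv}\big(\bigcup_{i=1}^k \overline{\conv}(\P_i)\big)$ at once.

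For the reverse inclusion ``$\supseteq$'', which is the conceptual heart of the statement, I would set $D := \overline{\conv}(\C)$, a closed convex set. Since $\P_i \subseteq \C \subseteq D$ for every $i$, minimality of the closed convex hull forces $\overline{\conv}(\P_i) \subseteq D$ for each $i$, and taking the union gives $\bigcup_{i=1}^k \overline{\conv}(\P_i) \subseteq D$. Applying $\overline{\conv}(\cdot)$ to this inclusion and using idempotence ($\overline{\conv}(D) = D$, as $D$ is already closed and convex) yields $\overline{\conv}\big(\bigcup_{i=1}^k \overline{\conv}(\P_i)\big) \subseteq D = \overline{\conv}(\C)$, which completes the proof.

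There is no genuine obstacle here; the argument is a routine two-inclusion chase. The only step deserving care is the minimality/idempotence property invoked in the reverse inclusion---namely that the closure of the convex hull of a set is the smallest closed convex set containing it and is therefore fixed by $\overline{\conv}(\cdot)$---which I would either cite as a standard fact or derive in one line from $\overline{\conv}(A) = \cl(\conv(A))$. It is worth emphasizing that the result requires no hypotheses on the $\P_i$ (no convexity, closedness, or boundedness), and that it is precisely this sequential reduction that justifies the decomposition used in~\eqref{seq1}.
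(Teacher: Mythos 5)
Your proof is correct and follows essentially the same route as the paper's: both directions rest on monotonicity of $\overline{\conv}(\cdot)$ for the inclusion ``$\subseteq$'' and on the minimality of the closed convex hull (the fact that $\overline{\conv}(\C)$ is a closed convex set containing each $\overline{\conv}(\P_i)$) for ``$\supseteq$''. Your write-up is merely more explicit about naming the idempotence/minimality properties than the paper's two-line argument.
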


\begin{proof}
Since $\conv(\C) \subseteq \conv(\overline{\conv}(\P_1) \cup \ldots \cup \overline{\conv}(\P_k))$, we have
$\overline{\conv}(\C) \subseteq \overline{\conv}(\overline{\conv}(\P_1) \cup \ldots \cup \overline{\conv}(\P_k))$. Hence it suffices to prove the reverse inclusion.

We observe that $\bigcup_{i=1}^k{\overline{\conv}{(\P_i)}} \subseteq \overline{\conv}(\C)$ and, since $\overline{\conv}(\C)$ is closed and convex, we deduce that $\overline{\conv}(\bigcup_{i=1}^k{\overline{\conv}{(\P_i)}}) \subseteq \overline{\conv}(\C)$.
\end{proof}

It is well-known that a matrix is PSD if and only if all of its principal minors
are nonnegative. The next lemma indicates that for $3\times 3$ matrices, the nonnegativity of only a subset of principal minors is equivalent
to positive semidefiniteness. This result simplifies our convex hull characterization.

\begin{lemma}\label{lem1}
The constraint
\begin{equation}\label{psd1}
\begin{pmatrix}
1   & x_1    & x_2 \\
x_1 & X_{11} & X_{12} \\
x_2 & X_{12} & X_{22}
\end{pmatrix} \succeq 0,
\end{equation}
is equivalent to the following inequalities:
\begin{equation}\label{psd2}
X_{11} \geq x^2_1, \quad X_{22} \geq x^2_2, \quad (X_{11}-x^2_1)(X_{22}-x^2_2) \geq (X_{12}-x_1 x_2)^2.
\end{equation}
\end{lemma}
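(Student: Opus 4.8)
The plan is to reduce the $3\times 3$ positive semidefiniteness condition to a $2\times 2$ one by a Schur complement, exploiting the fact that the $(1,1)$ entry of the matrix in~\eqref{psd1} equals $1$, and hence is strictly positive. This single observation makes the reduction an exact if-and-only-if, with no degenerate (vanishing-pivot) case to handle separately.

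First I would recall the Schur complement characterization: for a symmetric block matrix $\begin{pmatrix} a & b^T \\ b & C \end{pmatrix}$ with scalar pivot $a > 0$, the matrix is PSD if and only if the Schur complement $C - \tfrac{1}{a} b\, b^T$ is PSD. Applying this with $a = 1$, $b = (x_1, x_2)^T$, and $C = \begin{pmatrix} X_{11} & X_{12} \\ X_{12} & X_{22} \end{pmatrix}$, the matrix in~\eqref{psd1} is PSD if and only if the $2\times 2$ matrix $S := \begin{pmatrix} X_{11} - x_1^2 & X_{12} - x_1 x_2 \\ X_{12} - x_1 x_2 & X_{22} - x_2^2 \end{pmatrix}$ is PSD.

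It then remains to characterize when the symmetric $2\times 2$ matrix $S$ is PSD. A symmetric matrix $\begin{pmatrix} \alpha & \beta \\ \beta & \gamma \end{pmatrix}$ is PSD if and only if $\alpha \geq 0$, $\gamma \geq 0$, and $\alpha\gamma - \beta^2 \geq 0$. Reading off these three conditions for $S$ yields precisely $X_{11} - x_1^2 \geq 0$, $X_{22} - x_2^2 \geq 0$, and $(X_{11}-x_1^2)(X_{22}-x_2^2) - (X_{12}-x_1 x_2)^2 \geq 0$, which are exactly the inequalities in~\eqref{psd2}. This proves both implications simultaneously.

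I expect no substantial obstacle; the proof is essentially a clean two-step reduction. The only point deserving care is to record explicitly that the equivalence $M \succeq 0 \iff S \succeq 0$ holds because the pivot $a=1$ is strictly positive, so the Schur complement formula applies verbatim. For context I would also remark that a direct expansion shows the third inequality in~\eqref{psd2} equals the full determinant of the matrix in~\eqref{psd1}, while the first two are the principal minors indexed by rows and columns $\{1,2\}$ and $\{1,3\}$; thus the content of the lemma is that these three minors being nonnegative forces the remaining principal minors to be nonnegative as well, which is exactly what the Schur argument establishes.
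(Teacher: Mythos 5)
Your proposal is correct, and it takes a genuinely different route from the paper's own proof. The paper starts from the characterization of positive semidefiniteness via nonnegativity of all principal minors, reduces \eqref{psd1} to the four inequalities $X_{11} \geq x_1^2$, $X_{22} \geq x_2^2$, $X_{11}X_{22} \geq X_{12}^2$, and $(X_{11}-x_1^2)(X_{22}-x_2^2) \geq (X_{12}-x_1x_2)^2$, and then proves that the inequality $X_{11}X_{22} \geq X_{12}^2$ is redundant given the others; this redundancy argument is a two-case algebraic analysis (cases $X_{12} \leq x_1 x_2$ and $X_{12} \geq x_1 x_2$, manipulating factorizations involving $\sqrt{X_{11}X_{22}}$), followed by the sign-flip substitution $(x_1, X_{12}) \mapsto (-x_1, -X_{12})$ to dispose of the case $X_{12} < 0$. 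Your Schur-complement reduction with the strictly positive pivot $a=1$ bypasses both the redundancy question and the case analysis: it converts the $3\times 3$ condition directly into positive semidefiniteness of the $2\times 2$ matrix whose entries are exactly the quantities in \eqref{psd2}, and then applies the standard $2\times 2$ test. What your approach buys is brevity and the absence of any case split, at the price of invoking the Schur complement lemma (whose hypothesis of a strictly positive pivot you correctly verify); the paper's approach is more elementary and self-contained, and its explicit minor-by-minor bookkeeping connects directly to the remark following the lemma, that for a general $3\times 3$ matrix only four of the seven principal minors need to be checked --- a conclusion your argument also yields, as you note at the end.
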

\begin{proof}
By characterization of PSD matrices in terms of the sign of their principal minors, constraint~\eqref{psd1} can be equivalently written as system \eqref{e1}-\eqref{e3} below:
\begin{align}
& X_{11} \geq x^2_1, \quad X_{22} \geq x^2_2 \label{e1}\\
& X_{11} X_{22} \geq X^2_{12} \label{e2}\\
& (X_{11}-x^2_1)(X_{22}-x^2_2) \geq (X_{12}-x_1 x_2)^2 \label{e3}
\end{align}
It then suffices to show that inequalities~\eqref{e1} and~\eqref{e3} imply inequality~\eqref{e2}. Consider a point $(x_1, x_2, X_{11},X_{12},X_{22})$ that satisfies inequalities~\eqref{e1} and~\eqref{e3} and assume first that $X_{12}\geq 0$. We split the proof in two cases:
\begin{itemize}[leftmargin=*]
\item $X_{12} \leq x_1 x_2$: since by~\eqref{e1} we have $X_{11}\geq 0$ and $X_{22}\geq 0$, it follows that
    \begin{equation*}
    \sqrt{X_{11}X_{22}}-X_{12} \geq x_1x_2-X_{12} \geq 0,
    \end{equation*}
    where the first inequality follows from~\eqref{e1} and the second inequality follows from the assumption. Since $X_{12}\geq 0$, we deduce that inequality~\eqref{e2} is satisfied.

\item $X_{12} \geq x_1 x_2$: since by~\eqref{e1} we have $X_{11} \geq 0$ and $X_{22} \geq 0$, it follows that~\eqref{e3} can be equivalently written as:
\begin{align*}
\Big(\sqrt{X_{11}X_{22}}-X_{12}\Big)\Big(\sqrt{X_{11}X_{22}}+X_{12}-2x_1 x_2\Big) \geq \Big(x_1 \sqrt{X_{22}}-x_2 \sqrt{X_{11}}\Big)^2
\end{align*}
Moreover, by~\eqref{e1} we have  $\sqrt{X_{11}X_{22}}+X_{12}-2x_1 x_2 \geq X_{12}-x_1 x_2 \geq 0$, where the second inequality follows from assumption.
Hence, we must have $\sqrt{X_{11}X_{22}}-X_{12} \geq 0$. Since $X_{12}\geq 0$, we deduce that inequality~\eqref{e2} is satisfied.
\end{itemize}

To conclude, if $X_{12}< 0$, we consider the point
$$(\tilde x_1, \tilde x_2,\tilde X_{11}, \tilde X_{12}, \tilde X_{22}):= (-x_1, x_2, X_{11},-X_{12},X_{22})$$
which also satisfies inequalities~\eqref{e1} and~\eqref{e3}. Since $\tilde X_{12}> 0$, by the argument above the point $(\tilde x_1, \tilde x_2,\tilde X_{11}, \tilde X_{12}, \tilde X_{22})$ satisfies inequality \eqref{e2}. Hence also the point $(x_1, x_2, X_{11},X_{12},X_{22})$ satisfies \eqref{e2}, as desired.
\end{proof}

Notice that the result of Lemma~\ref{lem1} can be restated for any $3\times3$ matrix $\A$ after dividing all entries by the first entry $a_{11}$. That is,
one needs to check the positivity of $a_{11}$ together with inequalities~\eqref{psd2} after a proper rescaling, implying it is enough to check the sign of four out of seven principal minors of $\A$. Notice that assuming $a_{11} > 0$
is without loss of generality, since if $a_{11} = 0$, all entries in the first row and the first column of $\A$ must be zero, in which case the PSD check should be done for a $2\times2$ matrix.

The next result enables us to simplify the description of $\Sigma$ defined by~\eqref{sbar} by discarding some parts of its boundary.

\begin{lemma}[Theorem~6.9 in~\cite{rf70}]\label{drs}
Let $\C_1,\ldots, \C_m\subseteq \R^n$ be nonempty convex sets, and let $\C_0 := \conv(\C_1 \cup \ldots \cup \C_m)$. Then
$$
{\rm ri}(\C_0) = \bigcup\Big\{\lambda_1 {\rm ri}(\C_1)+ \ldots \lambda_m {\rm ri}(\C_m): \; \lambda_i > 0, \forall i \in [m], \; \sum_{i=1}^m{\lambda_i} =1 \Big\}.
$$
\end{lemma}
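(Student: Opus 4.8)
This statement is Theorem~6.9 of Rockafellar's \emph{Convex Analysis}~\cite{rf70}, so rather than inventing a new argument I would reconstruct its standard proof, which proceeds by \emph{homogenization}: lifting each $\C_i$ to a convex cone in $\R^{n+1}$, reducing the convex hull of a union to a sum of cones, and then exploiting the fact that relative interior commutes with finite sums. Concretely, for a nonempty convex set $\C\subseteq\R^n$ define the cone $\hat{\C}:=\{(\lambda x,\lambda):\lambda>0,\ x\in\C\}\subseteq\R^{n+1}$, that is, the cone generated by $\C\times\{1\}$ (omitting the apex). The first thing I would record is the elementary representation
$$\C_0=\conv\Big(\bigcup_{i=1}^m\C_i\Big)=\Big\{\sum_{i=1}^m\lambda_i x_i:\ x_i\in\C_i,\ \lambda_i\ge0,\ \sum_{i=1}^m\lambda_i=1\Big\},$$
valid because each $\C_i$ is convex, together with the two structural facts that carry the proof.

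The two facts are: (a) for any nonempty convex $\C$, $\ri(\hat{\C})=\{(\lambda x,\lambda):\lambda>0,\ x\in\ri(\C)\}$; and (b) for finitely many convex sets, $\ri(\D_1+\cdots+\D_m)=\ri(\D_1)+\cdots+\ri(\D_m)$ (Rockafellar, Cor.~6.6.2). Fact (a) follows since $\hat{\C}$ is the positive-scalar cone over the affine slice $\C\times\{1\}$ and the affine map $x\mapsto(\lambda x,\lambda)$ respects relative interiors. The bridge between the two levels is the identity $\widehat{\C_0}=\hat{\C}_1+\cdots+\hat{\C}_m$ (up to the apex, hence exactly at the level of relative interiors): the inclusion $\subseteq$ uses the representation above, writing $(\lambda x,\lambda)$ with $x=\sum_i t_i x_i$ as $\sum_i(\lambda t_i x_i,\lambda t_i)$ and discarding the zero terms, while the reverse inclusion $\supseteq$ collects a sum $\sum_i(\mu_i y_i,\mu_i)$ into $\big(\sum_i\mu_i y_i,\sum_i\mu_i\big)$ and normalizes by $\lambda:=\sum_i\mu_i$.

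Combining these, I would compute
$$\ri(\widehat{\C_0})=\ri\Big(\sum_{i=1}^m\hat{\C}_i\Big)=\sum_{i=1}^m\ri(\hat{\C}_i)=\Big\{\Big(\sum_i\mu_i y_i,\ \sum_i\mu_i\Big):\mu_i>0,\ y_i\in\ri(\C_i)\Big\},$$
and also, by fact (a) applied to $\C_0$, $\ri(\widehat{\C_0})=\{(\lambda x,\lambda):\lambda>0,\ x\in\ri(\C_0)\}$. Equating the two descriptions and reading off the last coordinate $\lambda=\sum_i\mu_i$ forces $x=\sum_i\lambda_i y_i$ with $\lambda_i:=\mu_i/\lambda>0$, $\sum_i\lambda_i=1$, and $y_i\in\ri(\C_i)$; this is exactly the asserted union, so $\ri(\C_0)$ equals the right-hand side. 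The main obstacle, and the only place requiring genuine care, is the handling of the cone apex and the possible non-closedness of the $\hat{\C}_i$: one must verify that the boundary discrepancies between $\widehat{\C_0}$ and $\sum_i\hat{\C}_i$ are irrelevant once the relative-interior operator is applied, and that facts (a) and (b) are invoked for general (not necessarily closed) convex sets, where they remain valid precisely because $\ri$ depends only on the affine hull and on the interior taken relative to it.
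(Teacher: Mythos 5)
The first thing to note is that the paper contains no proof of this lemma: it is quoted, statement and attribution both, as Theorem~6.9 of~\cite{rf70}, so there is no in-paper argument to compare yours against. Your homogenization argument is essentially the textbook proof of that theorem, and its architecture is sound: lift each $\C_i$ to the cone $\hat\C_i$ generated by $\C_i\times\{1\}$, use $\ri(\hat\C_i)=\{(\lambda x,\lambda):\lambda>0,\ x\in\ri(\C_i)\}$ (this is Rockafellar's Cor.~6.8.1; note it is a result in its own right rather than a direct consequence of affine invariance of $\ri$, since $\hat\C_i$ is a union of affine images over $\lambda>0$, and $\ri$ does not commute with unions), combine with $\ri(\D_1+\cdots+\D_m)=\ri(\D_1)+\cdots+\ri(\D_m)$, and read off the slice at height $\lambda$.

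The one genuine defect is the bridge identity $\widehat{\C_0}=\hat\C_1+\cdots+\hat\C_m$, which is false as stated, and the failure is not confined to ``the apex.'' Take $m=2$, $\C_1=\{0\}$, $\C_2=\{1\}$ in $\R$: then $(0,1)\in\widehat{\C_0}$ (the lift of $x=0\in\C_0=[0,1]$ with $\lambda=1$), yet every point of $\hat\C_1+\hat\C_2$ has first coordinate $\mu_2>0$. This is exactly what your ``$\subseteq$'' step glosses over: after discarding the zero terms you land in $\sum_{i\in I}\hat\C_i$ for some nonempty $I\subsetneq[m]$, which is not contained in the full sum $\sum_{i=1}^m\hat\C_i$, because there every summand must carry a strictly positive weight. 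You flag that one must check the discrepancy is invisible to $\ri$, but you never do it; here is the short repair. One has the sandwich $\sum_{i=1}^m\hat\C_i\subseteq\widehat{\C_0}\subseteq\cl\bigl(\sum_{i=1}^m\hat\C_i\bigr)$: the first inclusion is your ``$\supseteq$'' computation, and the second follows by perturbing the weights, $t_i^\varepsilon:=(1-\varepsilon)t_i+\varepsilon/m>0$, so that $(\lambda x^\varepsilon,\lambda)\in\sum_{i=1}^m\hat\C_i$ and $(\lambda x^\varepsilon,\lambda)\to(\lambda x,\lambda)$ as $\varepsilon\downarrow0$. All three sets are convex, and nested convex sets with a common closure have a common relative interior (if $A\subseteq B\subseteq\cl(A)$ then $\cl(A)=\cl(B)$, hence $\ri(A)=\ri(\cl(A))=\ri(\cl(B))=\ri(B)$ by Rockafellar Thm.~6.3). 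This gives $\ri(\widehat{\C_0})=\ri\bigl(\sum_{i=1}^m\hat\C_i\bigr)$, after which your computation with facts (a) and (b) goes through verbatim and yields the stated formula.
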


To construct the convex hull $\S_2$ in the original space, we define sets $\Ri_i$, $i \in \{1,\ldots,8\}$ satisfying
$\cup_{i=1}^8{\Ri_i} \supseteq \overline{\conv}(\S_2)$. Subsequently, we characterize the intersection of the convex hull of $\S_2$ with each $\Ri_i$ in the original space.
Finally, using the following lemma, we obtain $\overline{\conv}(\S_2)$ from these piece-wise descriptions.

\begin{lemma}\label{intcl}
Consider a set $\C$ and $k$ sets $\Ri_1,\dots, \Ri_k$ such that $\cup_{i=1}^k{\Ri_i} \supseteq \C$. Then
$$
\cl(\C)=\bigcup_{i=1}^k\cl(\C \cap \Ri_i).
$$
\end{lemma}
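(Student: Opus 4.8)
The plan is to reduce the claim to one algebraic rewriting of $\C$ together with the elementary topological fact that closure commutes with \emph{finite} unions. The covering hypothesis $\bigcup_{i=1}^k \Ri_i \supseteq \C$ is exactly what lets us express $\C$ itself as the finite union of the pieces $\C \cap \Ri_i$, after which the result is immediate.

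First I would intersect the covering relation with $\C$ to obtain
$$
\C = \C \cap \Big(\bigcup_{i=1}^k \Ri_i\Big) = \bigcup_{i=1}^k (\C \cap \Ri_i).
$$
Taking closures of both sides and applying the identity $\cl\big(\bigcup_{i=1}^k A_i\big) = \bigcup_{i=1}^k \cl(A_i)$, valid for any finite family of sets, then yields
$$
\cl(\C) = \cl\Big(\bigcup_{i=1}^k (\C \cap \Ri_i)\Big) = \bigcup_{i=1}^k \cl(\C \cap \Ri_i),
$$
which is precisely the asserted equality.

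For self-containedness I would also justify the closure-of-finite-unions identity rather than merely cite it. The inclusion $\bigcup_i \cl(A_i) \subseteq \cl(\bigcup_i A_i)$ is immediate, since each $A_i \subseteq \bigcup_j A_j$ forces $\cl(A_i) \subseteq \cl\big(\bigcup_j A_j\big)$; the reverse inclusion follows because $\bigcup_{i=1}^k \cl(A_i)$, being a finite union of closed sets, is itself closed, and it contains $\bigcup_i A_i$, hence contains the smallest closed set $\cl\big(\bigcup_i A_i\big)$ containing that union. The only point requiring care, and the sole place the hypothesis is genuinely used, is the \emph{finiteness} of the index set: the identity can fail for infinite unions, but the lemma supplies exactly $k$ sets, so no obstacle arises and the argument is complete.
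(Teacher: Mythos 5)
Your proof is correct and follows exactly the same route as the paper's: decompose $\C$ as $\bigcup_{i=1}^k(\C\cap\Ri_i)$ via the covering hypothesis, then apply the fact that closure commutes with finite unions. Your additional justification of that finite-union identity is a fine elaboration of a step the paper simply cites.
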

\begin{proof}
We have
$$\cl(\C)=\cl\Big(\bigcup_{i=1}^k(\C\cap \Ri_i)\Big)=\bigcup_{i=1}^k\cl(\C\cap \Ri_i),$$
where in the last equality we used that closure commutes with a finite union.
%
\end{proof}

The next lemma provides us with a key tool to perform the projection. It essentially says that the projection operation is equivalent to solving a parametric optimization problem.
This result is of independent interest as it provides us with a general technique to obtain projections of sets defined by nonlinear inequalities.

\begin{lemma}\label{projeq}
 Consider a function $f:(x,y)\in \R^n\times \R^m\to \R\cup \{\infty\}$ and two sets $\G\subset \R^n$, $\D\subset \R^n\times \R^m$. Define
 $$\Q:=\{x\in \G\,: \,\exists y\in  \R^m \mbox{ s.t. } f(x,y)\leq 0, \; (x,y)\in \D\}.$$
 For every $x\in \G$, consider the following optimization problem:
 \begin{align}\label{projOpt}
 \tag{AUX}
 \min \quad & f(x,y)\\
 \st  \quad & (x,y) \in \D \nonumber
 \end{align}
 and consider a set
 $$
 \E\subseteq \{x\in \G\, :\, \mbox{Problem \eqref{projOpt} admits a minimizer, denoted by $y_x$}\}.
 $$
 Then
 $$\Q\cap \E=\Q_{\E}:=\{x\in \E\, :\, f(x,y_x)\leq 0, \; (x,y_x)\in \D\}.$$
 \end{lemma}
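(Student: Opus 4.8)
The plan is to prove the set identity $\Q \cap \E = \Q_{\E}$ by a double inclusion, treating it as a purely logical consequence of the definition of a minimizer of the parametric problem~\eqref{projOpt}. No analytic estimates are required; the entire content lies in unpacking the definitions of $\Q$, $\E$, and the minimizer $y_x$.

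First I would establish $\Q_{\E} \subseteq \Q \cap \E$. Fix $x \in \Q_{\E}$. By definition $x \in \E \subseteq \G$, and the point $y_x$ satisfies $f(x,y_x) \leq 0$ together with $(x,y_x) \in \D$. Thus $y = y_x$ witnesses that $x \in \Q$, and since also $x \in \E$, we conclude $x \in \Q \cap \E$.

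The reverse inclusion $\Q \cap \E \subseteq \Q_{\E}$ is the step carrying the real content. Fix $x \in \Q \cap \E$. Membership in $\Q$ provides some $y \in \R^m$ with $(x,y) \in \D$ and $f(x,y) \leq 0$; in particular $y$ is feasible for~\eqref{projOpt}. Membership in $\E$ guarantees that~\eqref{projOpt} attains its minimum at $y_x$, so $y_x$ is feasible, \ie $(x,y_x) \in \D$, and by optimality $f(x,y_x) \leq f(x,y) \leq 0$. Hence $x$ satisfies every condition defining $\Q_{\E}$, which proves the inclusion, and combining the two gives $\Q \cap \E = \Q_{\E}$.

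The only point requiring care --- and the closest thing to an obstacle --- is the bookkeeping around the two defining properties of $y_x$: that a minimizer of the constrained problem~\eqref{projOpt} is itself feasible ($(x,y_x) \in \D$), and that its objective value does not exceed that of any other feasible point, here the witness $y$. Both are immediate from the definition of $\E$, and the chain $f(x,y_x) \leq f(x,y) \leq 0$ remains valid even though $f$ is allowed to take the value $+\infty$, since the bound $f(x,y) \leq 0$ forces $f(x,y)$ to be finite. With these observations the argument is complete.
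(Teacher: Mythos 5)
Your proof is correct and follows essentially the same route as the paper's: both directions are handled by unpacking the definitions, with the key step being that optimality of $y_x$ gives $f(x,y_x)\leq f(x,y)\leq 0$ for the witness $y$ coming from membership in $\Q$. Nothing is missing; the paper's own argument is exactly this double inclusion.
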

 \begin{proof}
 For every $x\in \Q\cap \E$ there exists $y\in  \R^m$ such that $f(x,y)\leq 0, (x,y)\in \D$. Hence, by definition of $y_x$, it follows that $f(x,y_x)\leq f(x,y)\leq 0$ and $(x,y_x)\in \D$. We deduce that $x\in \Q_{\E}$. Analogously, for every $x\in \Q_{\E}$ it holds $f(x,y_x)\leq 0$, and $(x,y_x)\in \D$. Hence $x\in \Q\cap \E$.
 \end{proof}

Consider now a convex function $f$ defined over a convex set $\D$ and consider a box $\H$.
It is well understood that if the minimum $f$ over $\D$, denoted by $z$ is attained outside $\H$, the minimum of $f$ over $\D \cap \H$ denoted by $\tilde z$
is attained over a face of $\H$.
The following lemma shows that, depending on the relative location of $z$ with respect to $\H$,  $\tilde z$ can only be attained over certain faces of $\H$.
This drastically simplifies our search for optimal solutions of Problem~\eqref{workSet3}.

\begin{lemma}\label{convbnd}
Consider a convex set $\D\subset \R^n$, and the box
  $$\H:=[l_1,u_1]\times \dots \times [l_n,u_n], \quad \mbox{where} \quad l_i,u_i\in \mathbb R\cup\{\pm \infty\}, \; \forall i=1,\dots,n.$$
   Consider a convex function $f:\D\to \R\cup\{\infty\}$ that attains its minimum value at a point $z\in \D\setminus \H$. Assume there exists $i<j\in [n]$ such that $u_k\geq l_k>z_k$ for every $k=1,\dots,i$, $z_k>u_k \geq l_k $ for every $k=i+1,\dots,j$ and $u_k\geq z_k\geq l_k$ for every $k=j+1,\dots, n$.
   Then
 \begin{equation}\label{eq}
 \min_{\H\cap \D} f= \min_{\I\cap \D} f,
 \end{equation}
   where
   $$\I:=\Big(\bigcup_{k=1}^i  [l_1,u_1]\times \dots \times \{l_k\} \times \dots \times [l_n,u_n] \Big)\cup \Big(\bigcup_{k=i+1}^j  [l_1,u_1]\times \dots \times \{u_k\} \times \dots \times [l_n,u_n]\Big)\subset \partial \H.$$
 \end{lemma}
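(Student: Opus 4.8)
The plan is to prove only the nontrivial inequality $\min_{\H\cap\D} f \ge \min_{\I\cap\D} f$; the reverse is immediate, since each face composing $\I$ fixes one coordinate at a box bound, so $\I\subseteq \partial\H\subseteq\H$ and hence $\I\cap\D\subseteq\H\cap\D$. Let $\tilde z$ be a minimizer of $f$ over $\H\cap\D$ (or, to avoid presupposing attainment, an arbitrary point of $\H\cap\D$; it suffices to exhibit a point of $\I\cap\D$ with no larger $f$-value). The whole argument is built around the segment $p(t):=z+t(\tilde z - z)$, $t\in[0,1]$, which lies in $\D$ by convexity of $\D$ and satisfies $p(0)=z\notin\H$, $p(1)=\tilde z\in\H$.

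First I would record the monotonicity that drives everything: since $f$ is convex and $z$ is a global minimizer of $f$ over $\D$, the one–variable function $g(t):=f(p(t))$ is convex on $[0,1]$ with minimum at the left endpoint $t=0$, and is therefore nondecreasing on $[0,1]$. Consequently $f(p(t^*))=g(t^*)\le g(1)=f(\tilde z)$ for every $t^*\in[0,1]$. (The extended–real values of $f$ cause no trouble: if $f(\tilde z)=+\infty$ the target inequality is vacuous, and otherwise $g$ is finite at both endpoints, hence finite and nondecreasing on $[0,1]$.)

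The heart of the proof is to show that the point where the segment first enters $\H$ lies on a face belonging to $\I$. I would analyze $p(t)$ coordinatewise using the hypothesis on the location of $z$. For $k\le i$ we have $z_k<l_k\le \tilde z_k$, so $p_k$ is strictly increasing and the box constraint $l_k\le p_k(t)\le u_k$ reduces to $p_k(t)\ge l_k$, i.e. $t\ge t_k$ for a unique $t_k\in(0,1]$; for $i<k\le j$ we have $z_k>u_k\ge \tilde z_k$, so $p_k$ is strictly decreasing and the constraint reduces to $p_k(t)\le u_k$, i.e. $t\ge t_k$; and for $k>j$ both endpoints lie in $[l_k,u_k]$, so $p_k(t)\in[l_k,u_k]$ for all $t$ and no constraint is active. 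Hence $p(t)\in\H$ if and only if $t\ge t^*:=\max_{1\le k\le j} t_k$. Letting $k^*$ attain this maximum and $p^*:=p(t^*)$, every coordinate of $p^*$ lies in its box interval while the $k^*$-th coordinate equals exactly $l_{k^*}$ (if $k^*\le i$) or $u_{k^*}$ (if $i<k^*\le j$); by the definition of $\I$ this places $p^*$ on one of its faces, so $p^*\in\I\cap\D$. Combining with the monotonicity gives $f(p^*)\le f(\tilde z)$, whence $\min_{\I\cap\D} f\le \min_{\H\cap\D} f$ and the asserted equality.

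The step I expect to require the most care is verifying that the constraint binding at the entry point $p^*$ is the \emph{relevant} bound—the lower bound for $k\le i$ and the upper bound for $i<k\le j$—and that neither the opposite bound of a violated coordinate nor any coordinate $k>j$ becomes active earlier. This is exactly what the strict coordinatewise monotonicity of $p_k$, together with the membership $\tilde z\in\H$ forcing the linear function $p_k$ to stay on the correct side of the box on $[0,1]$, guarantees; once it is in place, the identification of $p^*$ as a point of $\I$ and the conclusion are immediate.
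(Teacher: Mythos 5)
Your proof is correct and follows essentially the same route as the paper's: both take the segment from the unconstrained minimizer $z$ to an arbitrary point of $\H\cap\D$, identify its entry point into $\H$ as a point of $\I\cap\D$, and use convexity of $f$ together with the optimality of $z$ to bound the value at that entry point by the value at the endpoint. The only difference is one of detail: the paper simply asserts the existence of $s_x\in[0,1]$ with $(1-s_x)z+s_xx\in\I\cap\D$, whereas you supply the coordinatewise monotonicity argument that justifies exactly this claim.
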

 \begin{proof}
 Since $\H$ and $\D$ are convex, for every $x\in \H\cap \D$ the line segment $\{(1-s)z+sx:s\in [0,1]\}\subset \D$. Moreover, since $z\in \D\setminus \H$, for every $x\in  \H\cap \D$ there exists $s_x\in [0,1]$ such that $(1-s_x)z+s_xx\in \I\cap \D$. Using that $f$ is convex and $z$ is a minimizer, we conclude that
 $$f((1-s_x)z+s_xx)\leq (1-s_x)f(z)+s_xf(x)\leq (1-s_x)f(x)+s_xf(x)= f(x).$$
 As $\I\subset \H$, equality \eqref{eq} holds.
 \end{proof}

The next lemma provides us with a key tool to generate supporting hyperplanes of $\overline{\conv}(\S_2)$ during
the course of our separation algorithm. Roughly speaking, given our piece-wise characterization of the form $\cl(\conv(\S_2) \cap \Ri_i)$ for $i \in \{1,\ldots,8\}$,
this result provides a local characterization of the of the boundary of $\overline{\conv}(\S_2)$.

\begin{lemma}\label{bps}
Consider an open set $\mathcal A\subset \mathbb{R}^n$, four continuously differentiable functions $f_1,f_2,h_1,h_2:\mathcal A \to \R$, and two sets $\Ri, \C\subseteq \mathcal A$ such that $\C$ is an $n$-dimensional convex set and
$$
\Ri=\{x: f_1(x)>0,f_2(x)\geq 0\}, \quad \C\cap \Ri=\{x: h_1(x)\geq 0,h_2(x)\geq 0, f_1(x)>0,f_2(x)\geq 0\}.
$$
Assume that
\begin{equation}\label{ass}
\mbox{$\forall x\in \C\cap \Ri$ such that $h_1(x)=0$ and $\forall s>0$ it holds $\B_s(x)\setminus (\C\cap \Ri)\neq \emptyset$,}
\end{equation}
and that there exists $\hat x \in \Ri$ such that $h_1(\hat x)=0$, $\nabla h_1(\hat x)\neq 0$, $h_2(\hat x)>0$, $f_1(\hat x)>0$ and $f_2(\hat x)>0$.
Then there exists $r>0$ such that
\begin{equation}\label{00}
\{x: h_1(x)=0\}\cap \B_r(\hat x)= \partial \C \cap \B_r(\hat x).
\end{equation}
\end{lemma}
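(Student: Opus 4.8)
The plan is to reduce the global statement~\eqref{00} about the topological boundary of $\C$ to a purely local computation on a small ball, on which $\C$ is cut out by the single inequality $h_1 \geq 0$. First I would exploit continuity: since $f_1(\hat x), f_2(\hat x), h_2(\hat x) > 0$ and $f_1, f_2, h_2$ are continuous, there is $r > 0$ (also small enough that $\B_r(\hat x) \subseteq \mathcal A$) on which $f_1, f_2, h_2 > 0$ hold simultaneously. On this ball $f_1 > 0$ and $f_2 \geq 0$ force $\B_r(\hat x) \subseteq \Ri$, so $\C \cap \B_r(\hat x) = (\C \cap \Ri) \cap \B_r(\hat x)$; feeding in the defining description of $\C \cap \Ri$ together with $h_2, f_1, f_2 > 0$ then collapses all constraints but the first, giving the key local identity $\C \cap \B_r(\hat x) = \{x \in \B_r(\hat x): h_1(x) \geq 0\}$.

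Next I would upgrade this identity to closures and interiors, using that both operations commute with intersection by an open set. Since $\{x : h_1(x) \geq 0\}$ is closed (the preimage of a closed set under the continuous map $h_1$), the set $\C \cap \B_r(\hat x)$ is relatively closed in $\B_r(\hat x)$, whence $\cl(\C) \cap \B_r(\hat x) = \{x \in \B_r(\hat x): h_1(x) \geq 0\}$. This immediately yields one inclusion of~\eqref{00}: if $x \in \partial \C \cap \B_r(\hat x)$ then $h_1(x) \geq 0$, and $h_1(x) > 0$ would, by continuity of $h_1$, place a whole neighborhood of $x$ inside $\{h_1 > 0\} \cap \B_r(\hat x) \subseteq \C$, contradicting $x \in \partial \C$; hence $h_1(x) = 0$.

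For the reverse inclusion $\{x : h_1(x) = 0\} \cap \B_r(\hat x) \subseteq \partial \C \cap \B_r(\hat x)$ I would invoke assumption~\eqref{ass}. Any $x \in \B_r(\hat x)$ with $h_1(x) = 0$ satisfies $h_1(x) \geq 0$, $h_2(x) > 0$, $f_1(x) > 0$, $f_2(x) > 0$, hence $x \in \C \cap \Ri$. By~\eqref{ass}, for every small $s > 0$ the ball $\B_s(x) \subseteq \B_r(\hat x) \subseteq \Ri$ contains a point $y \notin \C \cap \Ri$; since $y \in \Ri$, necessarily $y \notin \C$. Thus every neighborhood of $x$ meets $\C^c$, so $x \notin \mathrm{int}(\C)$, while $x \in \C$; therefore $x \in \partial \C$. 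Combining the two inclusions proves~\eqref{00} with this $r$.

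The substantive point, and the main thing to get right, is the bookkeeping that lets a local description control the global boundary: the commutation of $\cl$ and $\mathrm{int}$ with intersection by the open ball $\B_r(\hat x)$, and the translation in the last step from ``not in $\C \cap \Ri$'' to ``not in $\C$'', which is precisely where the inclusion $\B_r(\hat x) \subseteq \Ri$ is used. The hypotheses $\nabla h_1(\hat x) \neq 0$ and the full-dimensional convexity of $\C$ do not appear to be needed to drive this argument once~\eqref{ass} is in hand; the gradient condition instead guarantees that $\hat x$ is a genuine boundary point and that $\{h_1 = 0\}$ is a true hypersurface near $\hat x$, so that~\eqref{00} is non-vacuous, and it furnishes an alternative regular-value proof of the reverse inclusion via the implicit function theorem should one wish to dispense with~\eqref{ass}.
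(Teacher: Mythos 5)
Your proof is correct, and for the harder inclusion $\partial \C \cap \B_r(\hat x) \subseteq \{x: h_1(x)=0\}$ it takes a genuinely different route from the paper's. Your central device is the local identity $\C \cap \B_r(\hat x) = \{x \in \B_r(\hat x): h_1(x) \geq 0\}$, obtained by shrinking $r$ until $h_2, f_1, f_2 > 0$ hold on the whole ball (so that $\B_r(\hat x) \subseteq \Ri$ and every constraint except $h_1 \geq 0$ becomes inactive); both inclusions of \eqref{00} then follow by elementary point-set topology. Your reverse inclusion is in substance the same as the paper's: the paper also uses assumption \eqref{ass} to place $\{x: h_1(x)=0,\, h_2(x)\geq 0,\, f_1(x)>0,\, f_2(x)>0\}$ inside $\partial(\C \cap \Ri)$ and then passes to $\partial \C$ via the set-theoretic inclusion $\partial(\C \cap \Ri) \setminus \partial \Ri \subset \partial \C$, which you localize instead (``$y \notin \C \cap \Ri$ and $y \in \Ri$ force $y \notin \C$''). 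The real divergence is the forward inclusion: the paper invokes the implicit function theorem (using $\nabla h_1(\hat x) \neq 0$) to make $\{h_1 = 0\} \cap \B_r(\hat x)$ a $C^1$ hypersurface, uses the $n$-dimensional convexity of $\C$ to make $\partial \C \cap \B_r(\hat x)$ a Lipschitz hypersurface, and concludes equality of the two $(n-1)$-dimensional surfaces from one inclusion---a dimension-comparison step that implicitly rests on an invariance-of-domain type argument; your argument replaces all of this with continuity of $h_1$ and the relative closedness of $\{h_1 \geq 0\}$ in the ball. Your route buys simplicity and generality: it is self-contained and, as you correctly observe, never uses $\nabla h_1(\hat x) \neq 0$ or the convexity and full-dimensionality of $\C$, so it proves a slightly stronger statement (those hypotheses matter only for the paper's surface argument, or, as you note, to give a regular-value substitute for \eqref{ass}). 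What the paper's formulation buys is the reusable global inclusion $\partial(\C \cap \Ri) \setminus \partial \Ri \subset \partial \C$, which is cited again in Remark~\ref{rem1}; your purely local argument does not yield that statement, but it is not needed for the lemma itself.
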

\begin{proof}
We first claim that
\begin{equation}\label{11}
\{x: h_1(x)=0, h_2(x)\geq 0, f_1(x)> 0,f_2(x)> 0\}\subset \partial \C.
\end{equation}
By the definition of $\C\cap \Ri$ and assumption \eqref{ass}, it follows that for every $x\in \{x: h_1(x)=0,h_2(x)\geq 0, f_1(x)> 0,f_2(x)> 0\}$ and for every $s>0$, we have that $\B_s(x)\cap (\C\cap \Ri)\neq \emptyset$ and $\B_s(x)\setminus (\C\cap \Ri)\neq \emptyset$. Hence, we conclude that
$$\{x: h_1(x)=0,h_2(x)\geq 0, f_1(x)> 0,f_2(x)> 0\}\subset \partial (\C\cap \Ri).$$

Since $f_1(x), f_2(x)$ are continuous functions, the set $\{x: f_1(x)>0,f_2(x)>0\}\subseteq \Ri$ is open, hence $\{x: f_1(x)>0,f_2(x)>0\}\cap \partial \Ri=\emptyset$. We deduce that
$$\{x: h_1(x)=0,h_2(x)\geq 0, f_1(x)> 0,f_2(x)> 0\}\subset \partial (\C\cap \Ri) \setminus \partial \Ri.$$
To conclude the proof of \eqref{11}, we are left to prove that
$$\partial (\C\cap \Ri) \setminus \partial \Ri  \subset \partial \C.$$
 The last inclusion can be proved as follows. Fix $x\in \partial (\C\cap \Ri)$, by definition for every $n\in \mathbb N$ there exists $y_n\in \B_{1/n}(x)\cap (\C\cap \Ri)$ and $z_n\in \B_{1/n}(x)\setminus (\C\cap \Ri)=(\B_{1/n}(x)\setminus \C)\cup (\B_{1/n}(x)\setminus \Ri)$. Hence, up to choosing a subsequence $n_k$, either $z_{n_k}\in \B_{1/n_k}(x)\setminus \C$ for every $k\in \mathbb N$, or $z_{n_k}\in \B_{1/n_k}(x)\setminus \Ri$ for every $k\in \mathbb N$. We deduce that $x\in \partial \C\cup \partial \Ri$. Hence $\partial (\C\cap \Ri)\subset  \partial \C\cup \partial \Ri$ and in particular $\partial (\C\cap \Ri) \setminus \partial \Ri  \subset \partial \C$, as desired.

Since $h_1(\hat x)=0$, $h_2(\hat x)>0$, $f_1(\hat x)>0$, $f_2(\hat x)>0$, by the continuity of $h_2,f_1,f_2$ at $\hat x$ we deduce that there exists $r>0$ such that
 \begin{equation}\label{22}
\{x: h_1(x)=0\}\cap \B_r(\hat x)= \{x: h_1(x)=0, h_2(x)> 0, f_1(x)> 0,f_2(x)> 0\} \cap \B_r(\hat x)\subset \partial \C \cap \B_r(\hat x),
\end{equation}
where the last inclusion follows from \eqref{11}.
Since $h_1$ is continuously differentiable and $\nabla h_1(\hat x)\neq 0$, by the implicit function theorem, up to consider a smaller $r>0$, we deduce that $\{x: h_1(x)=0\}\cap \B_r(\hat x)$ is a continuously differentiable $(n-1)$-dimensional surface. Since $\C$ is an $n$-dimensional convex set, we also know that $ \partial \C \cap \B_r(\hat x)$ is a Lipschitz $(n-1)$-dimensional surface. This, combined with \eqref{22}, implies \eqref{00}.

\end{proof}

\begin{footnotesize}
\bibliographystyle{plain}
\bibliography{biblio}
\end{footnotesize}

\end{document}